\documentclass[9pt,leqno,a4paper]{article}
\usepackage{amsfonts}
\usepackage{amsmath, amsfonts, amsthm, amssymb, amscd}
\usepackage[mathcal]{euscript}
\usepackage{mathrsfs}
\newtheorem{theorem}{Theorem}[section]

\newtheorem{lemma}[theorem]{Lemma}

\newtheorem{remark}[theorem]{Remark}
\numberwithin{equation}{section}

\usepackage{a4wide}

\newcommand \ndel {\not \hskip-.06cm  \partial} 

\newcommand \ih {\hat{i}}

\newcommand \jh {\hat{j}}
\newcommand \jc {\check{j}}

\newcommand \kh {\hat{k}}
\newcommand \kc {\check{k}}

\newcommand \lc {\check{l}}

\newcommand \delb {\overline{\del}}

\newcommand \delu {\underline{\del}}
\newcommand \Au {\underline{A}}
\newcommand \Bu {\underline{B}}
\newcommand \Pu {\underline{P}}

\newcommand \Boxt {\widetilde {\Box}}

\newcommand \del \partial

\newcommand \la \langle
\newcommand \ra \rangle

\let\oldmarginpar\marginpar
\renewcommand\marginpar[1]{\-\oldmarginpar[\raggedleft\footnotesize #1]%
{\raggedright\footnotesize #1}}

\begin{document}
\title{Global existence of small amplitude solutions to nonlinear coupled wave-Klein-Gordon systems in four space-time dimension with hyperboloidal foliation method}
\author{Yue MA\footnote{Laboratoir Jacques-Louis Lions. Email: ma@ann.jussieu.fr}}
%\subjclass[2000]{} \keywords{sdjfl}
%\date{\today}
%=================================================abstract==========================================================================================================================
\maketitle
\begin{abstract}
In this article one will develop a new type of energy method based on a foliation 
of space-time into hyperboloidal hypersurfaces . As we will see, with this method, some classical results such as global existence and 
almost global existence of regular solutions to the quasi-linear wave equations and Klein-Gordon 
equations will be established in a much simpler and much more natural way. Most importantly, the global existence 
of regular solutions to a general type of coupled quasilinear wave-Klein-Gordon system will be 
established. All of this suggests that compared withe the classical method, this hyperboloidal foliation of space-time may be a more natural way
to regard the wave operator.
\end{abstract}

\tableofcontents 
%================================================Introduction====================================================================================================================
\section{Introduction}
In the research of quasilinear hyperbolic partial differential equations, the global existence 
of regular solutions is a central problem. There are already lots of excellent works. 
S. Klainerman has firstly developed the conformal Killing vector fields method. With this method, he has managed to 
establish the global existence of regular solution to quasilinear wave equations with classical null conditions
in $\mathbb{R}^{3+1}$ (see \cite{Kl1}) , and latter, global existence of regular solution to quasilinear Klein-Gordon equations 
in $\mathbb{R}^{3+1}$ (see \cite{Kl2}). From that time, this conformal Killing vector fields method has been developed and 
applied by many other to many more general cases.

However, because of an essential difference between wave equation and Klein-Gordon equation,
one of the conformal Killing vector field of wave equation, the scaling field $S$, is not a conformal Killing 
vector field of Klein-Gordon equation. More unfortunately, this scaling vector field plays un important role in the
decay estimates of wave equations. This leads to an essential difficulty when one attempt to establish the global
existence of coupled wave-Klein-Gordon system. However in \cite{Ka}, S. Katayama has established in a relatively special case 
the global in time existence of this kind of system with a technical $L^{\infty} - L^{\infty}$ type estimate. 

In another hand, L. H\"ormander has developed an ``alternative energy method" (see  \cite{Ho1}) for 
dealing the global existence of quasilinear Klein-Gordon equation. His observation is as follows. 
Consider the following Cauchy problem associated to the linear Klein-Gordon equation in $\mathbb{R}^{n+1}$
\begin{equation}\label{intro eq linear}
\left\{
\aligned
&\Box u + a^2u = f,
\\
&u(B+1,x) = u_0,\quad u_t(B+1,x) = u_1,
\endaligned
\right.
\end{equation}
where $u_0,\, u_1$ are regular functions supported on $\{(B+1,x):|x|\leq B\}$ and $f$ is also a regular 
function supported on 
$$
\Lambda':= \{(t,x): |x|\leq t-1\},
$$
with $a,B>0$ two fixed positive constants. 
By the Huygens' principle, the regular solution of \eqref{intro eq linear} is supported in 
$$
\Lambda' \cap \{t\geq B+1\}.
$$
One denotes by: 
$$
H_T := \{(t,x) : t^2 - x^2 = T^2,\, t>0\}
$$
and 
$$
G_{B+1} = \Lambda'\cap \{(t,x): \sqrt{t^2- x^2}\geq B+1\},
$$
one can develop a hyperboloidal foliation of $G_{B+1}$, which is
$$
G_{2B} = H_T \times [B+1, \, \infty).
$$
Then, taking $\del_t u$ as multiplier, the standard procedure of energy estimate leads one to the following
energy inequality
$$
E_m(H_T,\,u)^{1/2} \leq E_m(H_{B+1},\,u)^{1/2} + \int_{B+1}^T ds \bigg(\int_{H_s}f^2\bigg)^{1/2},
$$
where
$$
E_m(H_s,\,u) := \int_{H_s} \sum_{i=1}^{3}\big((x^i/t)\del_t u + \del_i u\big)^2 + ((T/t)\del_t u)^2 + (a/2)u^2\, dx .
$$
Then, H\"ormander has developed a Sobolev type estimate, see the lemma 7.6.1 of \cite{Ho1}. Combined with the 
energy estimate, he has managed to establish the decay estimate:
$$
\sup_{H_T} t^{n/2}|u| \leq \sum_{|I|\leq m_0} E_m(H_T,\,Z^I u)^{1/2}\leq E_m(H_{B+1},\,Z^I u)^{1/2} + \int_{B+1}^T ds \bigg(\int_{H_s}Z^I f^2\bigg)^{1/2},
$$
where $m_0$ is the smallest integer bigger the $n/2$.

But in the proof of \cite{Ho1}, the only used term of the energy $E_m(H_T,\,u)$ is the last term $u^2$. The first two terms seem to be
 omitted, at least when doing decay estimates. The new observation in this article is that the first two terms of the energy can also be used 
for estimating some important derivatives of the solution. This leads one to the possibility of applying this 
method on the case where $a=0$, which is the wave equation, so that the wave equations and the Klein-Gordon equations can 
be treated in the same framework. That is the key of dealing the coupled wave-Klein-Gordon system, and one may call it 
hyperboloidal foliation method.

The following is a prototype
of the main result which will be established in this article. Consider the Cauchy problem associated to the coupled wave-Klein-Gordon
system in $\mathbb{R}^{3+1}$:
\begin{equation}\label{intro eq wave-KG}
\left\{
\aligned
&\Box u = N(\del u,\del u) + Q_1(\del v,\del v) + Q_2(\del u,\del v),
\\
&\Box v + v = Q_3(\del u,\del u) + Q_4(\del v,\del v) + Q_5(\del u,\del v),
\\
&u(B+1,x) = \varepsilon u_0,\quad  u_t(B+1,x) = \varepsilon u_1,
\\
&v(B+1,x) = \varepsilon v_0,\quad  v_t(B+1,x) = \varepsilon v_1.
\endaligned
\right.
\end{equation}
Here the $N(\cdot,\,\cdot)$ are the classical null quadratic terms while $Q_i(\cdot,\cdot)$ are arbitrary quadratic terms. 
%$g$ is a curved metric coupled with the unknown $u$ and $v$. The coefficients $g^{\alpha\beta}(u,v)$ are supposed to be regular functions
%of $u$ and $v$ with bounded derivatives and $g^{\alpha\beta}(0,0) = m^{\alpha\beta}$, which is the coefficient of Minkowski metric.  The reduced
%wave operator $\Boxt_g$ associated to the metric $g$ is defined as
%$$
%\Boxt_g := g^{\alpha\beta}\del_{\alpha\beta}^2.
%$$
$u_0,\, u_1$ are regular functions supported on $\{(B+1,x):|x|\leq B\}$.
As we will see in the following, in general the global existence result holds
\begin{theorem}[Prototype of the main result]
There exists a $\varepsilon_0 > 0$ such that for any $0\leq \varepsilon \leq \varepsilon_0$, \eqref{intro eq wave-KG} has an unique 
global in time regular solution. 
\end{theorem}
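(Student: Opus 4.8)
The plan is to run a continuity/bootstrap argument on a family of hyperboloidal energy functionals, reducing everything to the interior of the light cone via finite propagation speed. Since the data of \eqref{intro eq wave-KG} are supported in $\{|x|\le B\}$ at $t=B+1$, any regular solution is supported in $\mathcal K:=\{(t,x):|x|\le t-1\}$; on $\mathcal K$, $\sqrt{t^2-|x|^2}$ ranges over $[s_0,\infty)$ with $s_0:=B+1$, and on each hyperboloid $H_s:=\{t^2-|x|^2=s^2\}$ one has $s\le t\lesssim s^2$ on the support. I would work with the semi-hyperboloidal frame $\del_t,\ \delu_a:=\del_a+(x^a/t)\del_t$ (the $\delu_a$ tangent to $H_s$), and the Lorentz boosts $L_a:=x^a\del_t+t\del_a$, the spatial rotations and the translations; write $Z$ for a generic one of these and $Z^I$ for an ordered product. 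These are Killing for both $\Box$ and $\Box+1$, so $Z^Iu$, $Z^Iv$ again solve wave / Klein--Gordon equations with source $Z^I$ of the nonlinearity plus lower-order frame-conversion terms handled by induction on $|I|$. For the wave field put
\[
E(s,u):=\int_{H_s}\Big(\big((s/t)\del_tu\big)^2+\textstyle\sum_a(\delu_au)^2\Big)dx ,
\]
and for the Klein--Gordon field add the mass term, $E_{\mathrm g}(s,v):=\int_{H_s}\big((s/t)\del_tv\big)^2+\sum_a(\delu_av)^2+v^2\,dx$; then set $\mathcal E_m(s):=\sum_{|I|\le m}\big(E(s,Z^Iu)+E_{\mathrm g}(s,Z^Iv)\big)$.

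The analytic inputs I would assemble are: (i) the basic energy identity, obtained with the multiplier $\del_tZ^Iu$, which on hyperboloids reads
\[
E(s,Z^Iu)^{1/2}\le E(s_0,Z^Iu)^{1/2}+\int_{s_0}^{s}\big\|\,(\text{source of }Z^Iu)\,\big\|_{L^2(H_\tau)}\,d\tau
\]
and likewise for $E_{\mathrm g}$; (ii) the H\"ormander-type Sobolev inequality on hyperboloids, $\sup_{H_s}\!\big(t^{3/2}|w|\big)\lesssim\sum_{|I|\le 2}\|L^Iw\|_{L^2(H_s)}$, applied to the energy-controlled quantities $(s/t)\del_tw$, $\delu_aw$ and (for the Klein--Gordon field) $v$ itself, giving the decay $|\del u|\lesssim s^{-1}t^{-1/2}\mathcal E_2(s)^{1/2}$, the improved $|\delu_au|\lesssim t^{-3/2}\mathcal E_2(s)^{1/2}$ on good derivatives, and — crucially — the genuine rate $|v|\lesssim t^{-3/2}\mathcal E_2(s)^{1/2}$ for the undifferentiated Klein--Gordon field, coming from the $v^2$ term; (iii) a Hardy-type inequality on $H_s$ bounding $\|t^{-1}w\|_{L^2(H_s)}$ by $\sum_a\|\delu_aw\|_{L^2(H_s)}$ plus small boundary data, needed because the wave energy sees only derivatives of $u$ while frame conversions and certain rearrangements of the nonlinearities create undifferentiated wave factors; and (iv) the semi-hyperboloidal decomposition of the null forms, by which every term of $N(\del u,\del u)$ carries either a good derivative $\delu$ on a factor or a coefficient $\lesssim(s/t)^2$, so that $\|N\|_{L^2(H_\tau)}$ is controlled without ever paying the large weight $t/s$ — this is precisely what keeps the \emph{low-order} energy of the wave field bounded rather than slowly growing.

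The bootstrap runs on a maximal interval $[s_0,s_1)$ with an ansatz of the shape: the highest-order energies obey $\mathcal E_N(s)^{1/2}\le C_1\varepsilon s^{\delta}$ for a fixed small $\delta>0$, a chain of intermediate tiers grows with progressively smaller exponents, the lowest-order wave energy stays $\le C_1\varepsilon$, and the lowest-order Klein--Gordon energy is allowed an extra logarithm — all consistent once the exponents and the number of tiers are tuned; the goal is to improve every constant by a factor $1/2$ for $\varepsilon$ small, which by the standard continuation argument (and translation of the Cauchy data at $t=B+1$ into data on $H_{s_0}$ via the local theory, which also yields uniqueness) forces $s_1=\infty$. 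To close it one estimates the $L^2(H_\tau)$ norm of each source term by the usual split — lowest-order factor in $L^\infty$ via (ii), the other in the weighted $L^2$ governed by the energy — keeping a factor $(s/t)$ attached to any $\del_t$ that lands on a high-order factor. In the wave equation the null term gives an integrable $\tau^{-3/2}$ contribution, and $Q_1(\del v,\del v)$, $Q_2(\del u,\del v)$ are controlled by spending the $t^{-3/2}$ decay carried by their Klein--Gordon factor(s). In the Klein--Gordon equation $Q_4(\del v,\del v)$ and $Q_5(\del u,\del v)$ are similarly tamed, but the genuinely delicate term is $Q_3(\del u,\del u)$ — two wave factors, no null structure — for which one writes
\[
\|Q_3(\del Z^Iu,\del u)\|_{L^2(H_\tau)}\lesssim\|(t/\tau)\del u\|_{L^\infty(H_\tau)}\,\|(\tau/t)\del Z^Iu\|_{L^2(H_\tau)} ,
\]
uses $t\lesssim\tau^2$ on the support together with the \emph{bounded} low-order wave energy to bound the first factor by $O(\varepsilon)\tau^{-1}$, bounds the second by the energy, and checks that the resulting $\tau^{-1}$-type source produces only the slow growth allotted to the relevant tier.

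I expect this last point to be the main obstacle. The term $Q_3(\del u,\del u)$ is exactly the kind of non-null self-interaction of the wave that, when fed back into the \emph{wave} equation, produces finite-time blow-up; the construction works here only because it feeds the \emph{Klein--Gordon} equation and because the genuine wave self-interaction is null, so the low-order wave energy stays bounded, the $\tau^{-1}$ source for the Klein--Gordon energy integrates only to a logarithm, and that logarithm is absorbed by the polynomial slack $s^{\delta}$ built into the higher tiers. Arranging the hierarchy — the exponents, the number of tiers, and which energy is ``fed'' by which term — so that all of the wave-into-Klein-Gordon and Klein-Gordon-into-wave feedback loops close simultaneously, together with the commutator and frame-conversion bookkeeping, is where the bulk of the technical work lies; the remaining steps (data on $H_{s_0}$, the continuation argument) are routine by comparison.
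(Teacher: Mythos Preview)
Your proposal is correct and follows essentially the same route as the paper: a bootstrap on hyperboloidal energies using only boosts and translations as commuting vector fields, H\"ormander's Sobolev inequality on $H_s$ for decay, the semi-hyperboloidal/null frame to exploit the null structure of $N(\del u,\del u)$ and keep the low-order wave energy uniformly bounded, and the bounded low-order wave energy together with $t\lesssim s^2$ to turn the dangerous $Q_3(\del u,\del u)$ source in the Klein--Gordon equation into an $s^{-1}$ contribution absorbed by the $s^\delta$ slack at top order. The paper in fact gets by with slightly less machinery than you anticipate---no rotations, no Hardy inequality (the prototype has only derivative factors, so undifferentiated $u$ never appears), and a two-tier hierarchy ($|I|\le 5$ bounded for the wave, $|I^*|\le 7$ with $s^\delta$ growth for everything) rather than a finely graded chain---but these are simplifications within the same scheme, not a different argument.
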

This result has already been established in \cite{Ka} by S. Katayama. However, the advantages of the hyperboloidal foliation method are as follows.
First, it provides a proof much simpler than that of \cite{Ka} such that compared with the technical $L^{\infty}-L^{\infty}$ estimates,
it will use nothing else but the energy estimates and the Sobolev type inequalities in lemma 7.6.1 of \cite{Ho1}. Second, when one add terms such as
$\del_{\alpha}\del_{tt} w_j$ to the system, the method of \cite{Ka} does not work any more while the hyperbolic method still works.

Furthermore, this new method provides much simpler proofs when applied to many classical problems
such as the global existence of quasilinear wave equation in $\mathbb{R}^{3+1}$ with null conditions.

The structure of this article is as follows.
In section \ref{sec basic}, one will introduce the notation and establish the basic estimates. 
%At the end of this section, as an model problem, part of the 
%theorem \ref{thm li and zhou} will be re-proved by using this hyperboloidal foliation method.
In section \ref{sec main}, one will establish the main result of this article, the global in time existence of regular solution to coupled quasilinear 
wave-Klein-Gordon system. 
%==================================================Basics==========================================================================================================================================================
%--------------------------------------------------Notation--------------------------------------------------------------------------------------------------------------------------------------------------------
\section{Notation and Basic estimates}\label{sec basic}
\subsection{Notation and general framework}
One denotes by $H_T$ the hyperboloid $\{t^2 - |x|^2 = T^2\}$ with radius $T>0$. Let
$$
\Lambda' = \{|x| \leq t - 1\},
$$                                                               
and 
$$
G^{T_2}_{T_1}=\{|x| \leq t-1,\,T_1\leq \sqrt{t^2 - |x|^2} \leq T_2\}.
$$ 
On $H_T \cap \Lambda'$, when $T\geq 1$ one has
$$
T\leq t \leq T^2,
$$
while on $H_T\cap \{r:=|x|\leq t/2\}$,
$$
T\leq t\leq \sqrt{2}T.
$$

Define the vector fields
$$
H_i = t\del_i + x^i\del_t.
$$
%$$
%\Omega_{ij} = x^i\del_j - x^j\del_i.
%$$
One sets $Z_{\alpha} : = \del_{\alpha}$ for $\alpha = 0,\ldots,3$, and $Z_{\alpha} := H_{\alpha-4}$ 
for $\alpha = 5,\,6,\,7$. One denotes also by $Z^J$ the $|J|$-th order 
operator $Z_{J_1}\cdots Z_{J_{|J|}}$, where $J$ is a multi-index 
with length $|J|$.
One notices that $H_j$ are tangent to $H_T$ and $\overline{\del_i} := t^{-1}H_i$ is the projection of $\del_i$ on $H_T$. 
Moreover if one uses $\{x^i\}$ as a coordinate system 
on $H_T$, then $\overline{\del_i}$ can also be regarded as the natural frame associated with these coordinates. 
on the tangent bundle on $H_T$, and the associated area element of $H_T$ is 
$$
d\sigma = t^{-1}\sqrt{t^2 + |x|^2}\,dx.
$$
One also defines the tangential derivatives $\ndel_i$,
$$
\ndel_i = \omega^i\del_t + \del_i,
$$ where $\omega^i := x^i/r$. Theses vector fields are tangent to the out-going light cone. 
%--------------------------------------------Energy estimates------------------------------------------------------------------------------------------------------------------------
\subsection{Basic energy estimates}
One considers the following linear wave or Klein-Gordon equation:
\begin{equation}\label{basic eq linear}
\left\{
\aligned
&\Box u + a^2u= f ,
\\
&u|_{H_{B+1}} = u_0,\quad \del_t u|_{H_{B+1}} = u_1,
\endaligned
\right.
\end{equation}
where $a$ is a nonnegative constant and $u_1,\, u_1$ are regular functions supported on $H_{B+1} \cap \Lambda'$. 
$f$ is also supposed to be a regular function with its support contained in $\Lambda'\cap G_{B+1}^{\infty}$.
Define the energy on the hyperboloid $H_T$:
\begin{equation}\label{basic energy expressions}
\aligned
E_m(T,\,u) 
&:= \int_{H_T}\bigg(|\del_t u|^2 + \sum_{i=1}^3|\del_i u|^2 + \frac{2x^i}{t}\del_tu \del_i u + 2(au)^2 \bigg) dx,
\\
&= \int_{H_T} 2(au)^2 + \sum_{i=1}^3 |\delb_i u|^2 + \bigg(\frac{T}{t}\del_t u  \bigg)^2 dx,
\\
&= \int_{H_T} 2(au)^2 + \sum_{i=1}^3 \bigg(\frac{T}{t} \del_i u\bigg)^2 + \sum_{i=1}^3 \bigg(\frac{r}{t}\del_i u + \frac{x^i}{r}\del_t u \bigg)^2 dx.
\endaligned
\end{equation}
\begin{proof}
See section 7.7 of \cite{Ho1}.
\end{proof}
Note that 
$$
\big|\big(\ndel_i - \delb_i\big)u\big| = \bigg|\frac{T\omega^i}{2t}\bigg|\bigg|\frac{T}{t}\del_t u\bigg|,
$$
which implies
$$
\int_{H_T}\sum_{i=1}^3|\ndel_i u|^2 dx \leq E(T,\,u).
$$
In general one has the following energy estimate:
\begin{lemma}[Energy estimates]
\label{basic ineq energy}
Let $u$ be a regular solution of \eqref{basic eq linear}
then the following energy estimate holds:
\begin{equation}\label{basic energy ineq trivial}
E_m(T,u) \leq E_m(B+1,u) + \int_{B+1}^{T}\bigg(\int_{H_s}f^2dx\bigg)^{1/2}ds.
\end{equation}
\end{lemma}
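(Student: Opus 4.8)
The plan is to run the classical multiplier (stress--energy) argument, adapted to the region $G_{B+1}^T$ foliated by the hyperboloids $H_s$ rather than to a slab between two constant-time slices. First I would multiply the equation $\Box u + a^2 u = f$ by the timelike multiplier $2\del_t u$ and rewrite the identity obtained as a spacetime divergence $\del_\alpha V^\alpha = 2 f\,\del_t u$, with $V^0 = |\del_t u|^2 + \sum_i |\del_i u|^2 + (au)^2$ and $V^i = -2\,\del_t u\,\del_i u$. This is the only step using the equation itself; everything afterwards is geometry and Cauchy--Schwarz.

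Next I would integrate this identity over $G_{B+1}^T$ and apply the divergence theorem in $\mathbb{R}^{3+1}$. The boundary splits into three pieces. On the caps $H_T\cap\Lambda'$ and $H_{B+1}\cap\Lambda'$ the Euclidean unit normal to $H_s=\{t=\sqrt{s^2+r^2}\}$ is $\pm(t,-x)/\sqrt{t^2+r^2}$ and the induced area element is $t^{-1}\sqrt{t^2+r^2}\,dx$, so the flux of $V$ through $H_T\cap\Lambda'$, taken with the future-pointing normal, equals $\int_{H_T}\big(V^0-(x^i/t)V^i\big)\,dx$; by the algebraic identities in \eqref{basic energy expressions} this is the integrand of $E_m(T,u)$, and in particular a sum of squares, hence nonnegative, while the flux through $H_{B+1}\cap\Lambda'$ is $-E_m(B+1,u)$. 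The third piece is a portion of the light cone $\{r=t-1\}$, and this term is the crux: there the flux integrand is a positive multiple of $-V^0+\omega^i V^i=-\big((\del_t u+\omega^i\del_i u)^2+\sum_i|\del_i u|^2-(\omega^i\del_i u)^2+(au)^2\big)\le 0$, which has the \emph{wrong} sign to be simply discarded if one wants an upper bound on $E_m(T,u)$. The resolution, and the only place the support hypotheses on $u_0,u_1,f$ enter, is that this term actually vanishes: by finite speed of propagation $u$ is supported in $\Lambda'$, hence, being a regular (so $C^1$) function that is identically zero on the open set $\{r>t-1\}$, both $u$ and all its first-order derivatives vanish on $\{r=t-1\}$, so $V$ vanishes there. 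Thus the divergence theorem yields the exact identity $E_m(T,u)-E_m(B+1,u)=\int_{G_{B+1}^T}2f\,\del_t u\,dt\,dx$.

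Finally I would estimate the source term. The coarea formula for the foliation $\{H_s\}$ gives $\int_{G_{B+1}^T}g\,dt\,dx=\int_{B+1}^{T}\big(\int_{H_s}(s/t)\,g\,dx\big)\,ds$, so differentiating the identity above in $T$ gives $\frac{d}{dT}E_m(T,u)=\int_{H_T}2f\cdot\big((T/t)\del_t u\big)\,dx$. The weight $(T/t)\del_t u$ is precisely one of the square terms in the second expression for $E_m$ in \eqref{basic energy expressions}, so $\int_{H_T}\big((T/t)\del_t u\big)^2\,dx\le E_m(T,u)$, and Cauchy--Schwarz on $H_T$ then gives $\frac{d}{dT}E_m(T,u)\le 2\big(\int_{H_T}f^2\,dx\big)^{1/2}E_m(T,u)^{1/2}$. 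Dividing by $2E_m(T,u)^{1/2}$ and integrating from $B+1$ to $T$ (the set where $E_m$ vanishes being handled by replacing $E_m$ with $E_m+\varepsilon$ and letting $\varepsilon\to0$) yields $E_m(T,u)^{1/2}\le E_m(B+1,u)^{1/2}+\int_{B+1}^{T}\big(\int_{H_s}f^2\,dx\big)^{1/2}\,ds$, which is \eqref{basic energy ineq trivial}. I expect the one genuinely delicate point to be the treatment of the light-cone boundary term — checking that the support and regularity hypotheses force it to vanish; the remainder is the classical energy identity transported onto the hyperboloidal foliation.
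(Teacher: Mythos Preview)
Your argument is correct and is exactly the standard multiplier computation on the hyperboloidal slab that the paper has in mind; the paper does not spell out a proof but simply points to Section~7.7 of H\"ormander, where precisely this stress--energy/divergence argument with the $\del_t$ multiplier is carried out. Your treatment of the light-cone boundary (noting that the flux there has an unfavorable sign and must be killed by the support hypothesis coming from finite propagation speed) and the subsequent coarea + Cauchy--Schwarz step matches the standard derivation; the inequality you actually obtain, $E_m(T,u)^{1/2}\le E_m(B+1,u)^{1/2}+\int_{B+1}^T(\int_{H_s}f^2\,dx)^{1/2}\,ds$, is the form stated in the Introduction and used throughout the paper --- the missing square roots in \eqref{basic energy ineq trivial} are a typo in the lemma's display.
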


In Appendix A one will see that for the case treated in this article, the energy defined on hyperboloid is controlled by the standard energy. More 
precisely, for any $\varepsilon,C_1>0$, there exists an $\varepsilon'=\varepsilon'(B,\varepsilon)>0$ such that if 
$$
E^*(B+1,u)^{1/2} \leq \varepsilon',
$$
then
$$
E_m(B+1,u) \leq \varepsilon.
$$ 
%So in section \ref{basic sec model problem}, one will omit the difference of between the Cauchy problem with initial data posed on plan $\{t=B+1\}$ and that with initial data
%posed on hyperboloid $H_{B+1}$. The initial data are posed on hyperboloid.

%---------------------------------------------------------------------------------------------------------------------------------------------------------------------
\subsection{Commutators}
In this subsection one discusses some commutative relations. They are very important for establishing the decay estimates. Recall that $T^2=t^2-t^2$. 
The following commutative properties are obvious:
\begin{equation}\label{commutator H-wave}
[H_j,\,\Box] = 0,
\end{equation}
and
\begin{equation}\label{commutator partial-wave}
[\del_{\alpha},\,\Box] = 0.
\end{equation}
One defines the vector field family
$$
\mathscr{D}_g:= \{\ndel_i,\,(t-r)r^{-1}\del_{\alpha}\}.
$$
Firstly one has the following commutative relations between $H_j$ and $\del_{\alpha}$:
\begin{equation}\label{commutator base H-partial}
\aligned
&H_j\del_t u = \del_t H_j u - \del_j u,\\
&H_j\del_i u = \del_i H_j u - \delta^i_j\del_t u.
%&H_j\big(r^{-1}\big) = -\frac{t}{r}\omega^j r^{-1},\\
%&H_j\big(t^{-1}\big) = -\frac{x^j}{t}t^{-1}.
\endaligned
\end{equation}
Notice that one has 
$$
H_j\bigg(\frac{T}{t}\bigg) = -\frac{x^jT}{t^2},
$$
so one gets the following commutative relations between $H_j$ and $\frac{T}{t}\del_{\alpha}$:
\begin{equation}\label{commutator base H-T/t}
\aligned
&H_j\bigg(\frac{T}{t}\del_t u\bigg) = -\frac{T}{t}\bigg(\del_j u + \frac{x^j}{t}\del_t u\bigg) + \frac{T}{t}\del_t(H_j u),
\\
&H_j\bigg(\frac{T}{t}\del_i u\bigg) = -\frac{T}{t}\bigg(\delta_i^j\del_t u + \frac{x^j}{t}\del_i u\bigg) + \frac{T}{t}\del_t(H_j u).
\endaligned
\end{equation}
The commutative relations between $H_j$ and $D_g\in \mathscr{D}_g$ are
\begin{equation}\label{commutator base H-tangental}
\aligned
&H_j\ndel_iu = \ndel_iH_ju - \omega^i\ndel_ju + \big(\delta^i_j - \omega^i\omega^j\big)\bigg(\frac{t}{r}-1\bigg)\del_t,
\\
&H_j\bigg(\Big(\frac{t}{r} - 1\Big)\del_t u \bigg) = \bigg(\frac{t}{r} -1 \bigg)\del_t(H_j u) -\omega^j\bigg(\frac{t}{r} + 1\bigg)\bigg(\frac{t}{r} -1 \bigg)\del_t u
- \bigg(\frac{t}{r} -1 \bigg)\del_j u,
\\
&H_j\bigg(\Big(\frac{t}{r} - 1\Big)\del_i u \bigg) =  \bigg(\frac{t}{r} -1 \bigg) \del_i(H_j u)  -\omega^j\bigg(\frac{t}{r} + 1\bigg)\bigg(\frac{t}{r} -1 \bigg)\del_i u
- \delta_j^i\bigg(\frac{t}{r} -1 \bigg)\del_t u.
\endaligned
\end{equation}
The commutative properties between $H_j$ and $\delb_i$ are
\begin{equation}\label{commutator base H-par_b}
H_j\delb_iu = \delb_j H_iu - \frac{x^j}{t}\delb_j u.
\end{equation}
In general one has the following results:
\begin{lemma}\label{basic lem commutator H}
Let $I$ be a arbitrary multi-index. In the region $\Lambda'\cap G_{B+1}^{\infty}$ one has:
\begin{equation}\label{commutator H-partial}
|H^I \del_{\alpha} u| \leq |\del_{\alpha} H^I u| + C(n,|I|)\sum_{|J|<|I|}\sum_{\beta=0}^n |\del_{\beta} H^J u|,
\end{equation}
\begin{equation}\label{commutator H-par_b}
\sum_{i=1}^3\sum_{|I|=p} |H^I \delb_i u| \leq \sum_{i=1}^3\sum_{|I|=p} |\delb_i H^I u| + C(n,|I|)\sum_{|J|<p}\sum_{j=1}^n |\delb_j H^J u|,
\end{equation}
and
\begin{equation}\label{commutator H-T/t}
\bigg|H^I \bigg(\frac{T}{t}\del_{\alpha} u\bigg)\bigg| \leq \bigg|\frac{T}{t}\del_{\alpha} H^I u\bigg| 
+ C(n,|I|)\sum_{|J|<|I|}\sum_{\beta=0}^3 \bigg|\frac{T}{t}\del_{\beta} H^J u\bigg|.
\end{equation}
In the region $\{r \geq t/2\} \cap \Lambda'\cap G_{B+1}^{\infty}$ one has:
\begin{equation}\label{commutator H-tangental}
|H^I D_g u| \leq |D_g H^I u| + C(n,|I|)\sum_{|J|< |I|\atop Y\in \mathscr{D}_g}|Y H^J u|, 
\end{equation}
where $C(n,|I|)$ is a constant depending only on dimension $n$ and $|I|$.
\end{lemma}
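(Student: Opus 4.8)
The plan is to derive all four estimates from a single induction on $|I|$, taking the first-order relations \eqref{commutator base H-partial}--\eqref{commutator base H-par_b} as the case $|I|=1$. Rather than the stated inequalities it is cleaner to carry along \emph{operator identities}: for $P$ equal to any of $\del_\alpha$, $\delb_i$, $(T/t)\del_\alpha$, or $D_g\in\mathscr{D}_g$, one shows
$$
H^I P = \sum_{|I'|=|I|}\sum_{P'} a^{I}_{I',P'}\, P'\, H^{I'} + \sum_{|J|<|I|}\sum_{P'} b^{I}_{J,P'}\, P'\, H^{J},
$$
where $P'$ ranges over the same family as $P$, the top-order block contains the term $1\cdot P\,H^{I}$, and all coefficients $a,b$ are finite sums of products of the functions $x^j/t$, $\omega^j$, $t/r$, $T/t$. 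Taking absolute values and bounding these coefficients by a constant $C(n,|I|)$ on the appropriate region yields the four inequalities. For $P=\del_\alpha$ and $P=(T/t)\del_\alpha$ the top-order block reduces to the single term $P\,H^{I}$, so no index-mixing occurs; for $P=\delb_i$ and $P=D_g$ the base commutators shuffle the lower index, so the residual terms range over the whole corresponding family, which is exactly why \eqref{commutator H-par_b} and \eqref{commutator H-tangental} are stated as inequalities for the sums over the index $i$ (resp. over $Y\in\mathscr{D}_g$) and over $|I|=p$.

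The inductive step is mechanical. Write $H^I = H_j H^{I'}$ with $|I'|=|I|-1$, insert the level-$|I'|$ identity, and apply $H_j$ term by term. On a summand $c\,P'\,H^{J}u$ one uses $H_j(c\,P'\,H^{J}u)=(H_jc)\,P'\,H^{J}u+c\,[H_j,P']H^{J}u+c\,P'\,(H_jH^{J})u$. The base commutators \eqref{commutator base H-partial}, \eqref{commutator base H-T/t}, \eqref{commutator base H-tangental}, \eqref{commutator base H-par_b} show $[H_j,P']$ is again a (bounded-function) combination of operators in the same family; $H_jH^{J}$ has length $|J|+1\le|I'|<|I|$; and the direct computations
$$
H_j(x^i/t)=\delta^i_j-\frac{x^ix^j}{t^2},\quad H_j\omega^i=\frac{t}{r}\bigl(\delta^i_j-\omega^i\omega^j\bigr),\quad H_j\Bigl(\frac tr\Bigr)=\frac{x^j}{r}\Bigl(1-\frac{t^2}{r^2}\Bigr),\quad H_j\Bigl(\frac Tt\Bigr)=-\frac{x^j}{t}\cdot\frac Tt
$$
show $H_jc$ is again a product/sum of the same functions. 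Hence every term produced is of the admitted form, with residual operator length $<|I|$ except in the block obtained by commuting $H_j$ past the leading $P'$; collecting terms closes the induction, and one sets $C(n,|I|)$ equal to the maximum over the finitely many coefficient functions occurring at lengths $\le|I|$.

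It then remains to check that these coefficients are bounded on the stated regions, which is the only genuine constraint. On $\Lambda'=\{|x|\le t-1\}$ one has $|x|/t<1$, so $x^j/t$, $x^ix^j/t^2$ and $(x^j/t)(T/t)$ are bounded on $\Lambda'\cap G_{B+1}^{\infty}$, which gives \eqref{commutator H-partial}, \eqref{commutator H-T/t}; since $\delb_i=\del_i+(x^i/t)\del_t$, the same bound handles \eqref{commutator H-par_b}. The functions $\omega^i$, $t/r$ and $1-t^2/r^2$ are \emph{not} bounded near the light cone, but on $\{r\ge t/2\}$ one has $1<t/r\le 2$, hence $|1-t^2/r^2|\le 3$ and $0<(t/r)-1\le 1$; this is precisely what is needed to bound all coefficients in \eqref{commutator base H-tangental} and their iterated $H_j$-images, and it is why \eqref{commutator H-tangental} is restricted to $\{r\ge t/2\}\cap\Lambda'\cap G_{B+1}^{\infty}$.

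The main obstacle — in fact the only nontrivial point — is the bookkeeping: one must check that each commutation \emph{strictly} lowers the length of the residual operator (so the error sum really runs over $|J|<|I|$), that the index relabelling in the $\delb$- and $\mathscr{D}_g$-commutators is absorbed by summing over $i$ and over $|I|=p$, and that only finitely many coefficient functions appear at each order so that $C(n,|I|)$ is well defined. None of the individual computations is difficult; the care lies entirely in organizing the expansion.
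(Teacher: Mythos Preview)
Your proposal is correct and follows essentially the same approach as the paper: both argue by induction on $|I|$, using the base relations \eqref{commutator base H-partial}--\eqref{commutator base H-par_b} together with the observation that $H_j$ maps the relevant coefficient functions ($\omega^i$, $t/r$, $t/r\pm 1$, $x^j/t$, $T/t$) back into products of the same family, bounded on the indicated regions. The paper's proof is a terse sketch of exactly this (it computes the $H_j$-derivatives of the coefficients, defines the family $\mathscr{F}$, and writes the resulting operator identity); your version spells out the inductive bookkeeping and the region restrictions more carefully, but there is no substantive difference in method.
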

\begin{proof}
The inequality \eqref{commutator H-partial} is a direct result of \eqref{commutator base H-partial}.
To prove the \eqref{commutator H-tangental}, one notices that in \eqref{commutator base H-tangental}, one has 
some non-constant coefficients. To get the result, one calculates the their derivatives, 
$$
\aligned
&H_j\big(\omega^i\big) = \big(\delta^i_j - \omega^i\omega^j\big)\frac{t}{r},\\
&H_j\bigg(\frac{t}{r}\bigg) = \frac{x^j}{r} - \frac{t^2x^j}{r^3} = -\omega^j\bigg(\frac{t}{r} + 1\bigg)\bigg(\frac{t}{r} -1\bigg),
\\
&H_j\bigg(\frac{t}{r} +1\bigg) = \frac{x^j}{r} - \frac{t^2x^j}{r^3} = -\omega^j\bigg(\frac{t}{r} -1\bigg)\bigg(\frac{t}{r} + 1\bigg),
\\
&H_j\bigg(\frac{t}{r} -1\bigg) = \frac{x^j}{r} - \frac{t^2x^j}{r^3} = -\omega^j\bigg(\frac{t}{r} + 1\bigg)\bigg(\frac{t}{r} -1\bigg),
%&H_j\bigg(\frac{x^i}{t}\bigg) = \delta^i_j - \frac{x^ix^j}{t^2},\\
%&H_j\bigg(\frac{T}{t}\bigg) = -\frac{x^j}{t}\frac{T}{t}.
\endaligned
$$
One denotes by $\mathscr{F}$ the family of functions:
$$
\mathscr{F} := \{\omega,\,\frac{t}{r},\, \frac{t}{r} + 1,\, \frac{t}{r} - 1\}. %,\, \frac{T}{t},\, \frac{x^j}{t}\}.
$$
Then one gets, for arbitrary multi-index, 
$$
H^I D_g u = D_g H^I u + \sum_{|J|<|I|} F^{K(I,J)}D_g H^J u,
$$
where $K(I,J)$ is a multi-index of length $|K|$ depending only on two other multi-index $I$ and $J$, and 
$F^K$ is a multiplier defined as follows:
$$
F^K := F^{K_1}F^{K_2}\cdots F^{K_{|K|}}, \quad F^{K_i}\in \mathscr{F}.
$$
Notice that when $t/2\leq r \leq t-1$, all the terms on the right-hand-side are bounded, which gives \eqref{commutator H-tangental}.

The inequalities \eqref{commutator H-par_b} and \eqref{commutator H-T/t} are proved similarly, one omits the details.
\end{proof}

One has also the following commutative relations between $\del_{\alpha}$ and $D_g$:
\begin{equation}\label{commutator del-good}
\aligned
&\del_j\ndel_i = \big(\delta^i_j r^{-1} - \omega^i\omega^j r^{-1}\big)\del_t + \ndel_i\del_j,\\
&\del_t\ndel_i = \ndel_i \del_t,\\
&\del_j\bigg(\frac{t}{r} - 1\bigg)\del_t = -\omega^j\frac{t}{r^2}\del_t + \bigg(\frac{t}{r} - 1\bigg)\del_t\del_j,\\
&\del_t\bigg(\frac{t}{r} - 1\bigg)\del_t = \frac{1}{r}\del_t + \bigg(\frac{t}{r} - 1\bigg)\del_t \del_t,\\
&\del_j\bigg(\frac{t}{r} - 1\bigg)\del_i = -\omega^j\frac{t}{r^2}\del_i + \bigg(\frac{t}{r} - 1\bigg)\del_i\del_j,\\
&\del_t\bigg(\frac{t}{r} - 1\bigg)\del_i = \frac{1}{r}\del_i + \bigg(\frac{t}{r} - 1\bigg)\del_i\del_t.
\endaligned
\end{equation}
%and 
%\begin{equation}
%\aligned
%&\del_j\big(\omega^i\big) = \delta^i_j r^{-1} - \omega^i\omega^j r^{-1},\\
%&\del_j\bigg(\frac{x^i}{t}\bigg) = \frac{\delta^i_j}{t},\\
%&\del_j\bigg(\frac{t}{r}-1\bigg) = -\frac{x^j t}{r^3} = 
%-\omega^j\frac{t}{r}\frac{1}{r}
%\endaligned
%\end{equation}
And, the following result:
\begin{lemma}\label{basic lem commutator}
Let $u$ be a regular function on $\Lambda'\cap G_{B+1}^{\infty}$. 
The the following estimates are true
$$
\sum_{|J|=p}\big|Z^J\delb_i u\big| \leq C(p,n)\sum_{|\beta|\leq p} \sum_{j=1}^3 \big|\delb_j Z^{\beta}u\big|,
$$
$$
\big|Z^I\del_{\alpha} u\big| \leq \big|\del_{\alpha}Z^I u\big| + C(|I|,n)\sum_{|J|<|I|} \sum_{\alpha=0}^3 \big|\del_{\alpha} Z^J u\big|.
$$
When $r \geq \frac{1}{2}t$, then the following estimate is true
$$
\big|Z^I\ndel_i u\big| \leq \big|\ndel_i Z^I u\big| + C(|I|,n)\sum_{|J|<|I|}\sum_{j=1}^3\big|\ndel_j Z^J u\big|
+ C(|I|,n)(T^2/t^2)\sum_{|J|<|I|}\sum_{\alpha=0}^3\big|\del_{\alpha}Z^I u\big|.
$$
\end{lemma}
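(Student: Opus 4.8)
The plan is to prove the three estimates together, in each case by induction on the length of the multi-index, peeling off the \emph{leftmost} vector field of the string $Z^I$ (resp.\ $Z^J$) and reducing the inductive step to one first-order commutator. Since every $Z_\alpha$ is either a translation $\del_\beta$ or a boost $H_j$, only two kinds of peel step occur, and all the first-order relations needed are already recorded: $[\del_\alpha,\del_\beta]=0$, \eqref{commutator base H-partial} for $[H_j,\del_\alpha]$, \eqref{commutator base H-par_b} for $[H_j,\delb_i]$, \eqref{commutator del-good} for $[\del_\alpha,\ndel_i]$, and the first line of \eqref{commutator base H-tangental} for $[H_j,\ndel_i]$; for a purely boost substring one may quote Lemma \ref{basic lem commutator H} directly. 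The only genuinely new ingredient is a bookkeeping device for the non-constant coefficients that appear, which I isolate first.

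Let $\mathscr B$ be the algebra of polynomials in the functions $x^i/t$ and $1/t$ on $\Lambda'\cap G_{B+1}^\infty$. A direct computation (cf.\ the role of $\mathscr F$ in the proof of Lemma \ref{basic lem commutator H}) shows $Z_\alpha\mathscr B\subset\mathscr B$, and every element of $\mathscr B$ is bounded there (one has $|x^i|\le r\le t-1$ and $t\ge B+1\ge1$; note also $t\le T^2$, since $T^2=t^2-r^2\ge 2t-1\ge t$). I will also use a transverse-control estimate: for any regular $w$ on $\Lambda'\cap G_{B+1}^\infty$,
\[
|\del_\alpha w|\ \le\ C\,t\sum_{j=1}^{3}\ \sum_{|\mu|\le1}\big|\delb_j Z^\mu w\big| .
\]
This is obtained by expanding $\delb_j H_j w=\delb_j\big(t\del_j w+x^j\del_t w\big)$ for each $j$, summing over $j=1,2,3$, re-expanding the resulting $\del_j w$ terms via $\del_i w=\delb_i w-(x^i/t)\del_t w$, and then solving for $\del_t w$; the mild subtlety is precisely that one must re-expand first, after which the coefficient of $\del_t w$ is manifestly $3-r^2/t^2\ge 2$ on $\Lambda'$, so nothing degenerates even though $r/t$ may approach $1$.

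The three inductions then run as follows. For the $\del_\alpha$-estimate, write $Z^I=Z_{I_1}Z^{I'}$, apply the induction hypothesis in identity form (a constant-coefficient combination of $\del_\beta Z^J u$ with $|J|\le|I'|$, only $\del_\alpha Z^{I'}u$ at top order), and push $Z_{I_1}$ inside: a pass across a $\del$ is free, a pass across an $H_j$ costs a constant-coefficient $\del$-term by \eqref{commutator base H-partial}, and all new terms have length $<|I|$, so the coefficients stay constant and the estimate closes verbatim. For the $\delb_i$-estimate, prove by the same peeling the stronger identity $Z^J\delb_i u=\sum_{|\gamma|\le p}\sum_l c^{J,\gamma}_l\,\delb_l Z^\gamma u+\sum_{|\gamma|<p}\sum_\alpha d^{J,\gamma}_\alpha\,\del_\alpha Z^\gamma u$, with $c\in\mathscr B$ and each $d$ equal to $t^{-1}$ times an element of $\mathscr B$: the $[H_k,\delb_l]$ steps (via \eqref{commutator base H-par_b}) feed the first sum with $\mathscr B$-coefficients, while $[\del_\beta,\delb_l]=(\delta_{\beta l}/t)\del_t$ and $[\del_t,\delb_l]=-(x^l/t^2)\del_t$ feed the second with $t^{-1}\mathscr B$-coefficients, and both classes are $Z_\alpha$-stable, so the identity propagates. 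To pass to the stated inequality, the first sum is already in the desired shape; in the second, apply the transverse-control estimate with $w=Z^\gamma u$: $|d^{J,\gamma}_\alpha|\,|\del_\alpha Z^\gamma u|\le|t\,d^{J,\gamma}_\alpha|\cdot C\sum_{j,|\mu|\le1}|\delb_j Z^\mu Z^\gamma u|$, where $t\,d^{J,\gamma}_\alpha\in\mathscr B$ is bounded and $|\mu|+|\gamma|\le p$. Finally, for the $\ndel_i$-estimate on $\{r\ge t/2\}$, peel $Z_{I_1}$ using \eqref{commutator del-good} (for $Z_{I_1}=\del_\alpha$) and the first line of \eqref{commutator base H-tangental} (for $Z_{I_1}=H_j$): on $r\ge t/2$ the coefficients $t/r,\ \omega^i,\ t/r\pm1$ are bounded, so the $\ndel$-terms reproduce themselves with bounded coefficients, whereas the discrepancies $\big(\tfrac{t}{r}-1\big)\del_t$ and $r^{-1}\del_t$ are $O(T^2/t^2)\,\del_t$ (since $\tfrac{t}{r}-1=\tfrac{t-r}{r}\le\tfrac{2T^2}{t(t+r)}\lesssim\tfrac{T^2}{t^2}$ and $r^{-1}\le\tfrac2t\le\tfrac{2T^2}{t^2}$, this last using $t\le T^2$); applying a further $Z_{I_1}$ to such a term produces only $O(T^2/t^2)\,\del$-terms again, because $H_jT=0$ gives $Z_\alpha(T^2/t^2)=O(T^2/t^2)$, and these weighted $\del$-terms are exactly what the stated right-hand side allows, so the induction closes.

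I expect the only real difficulty to be organizational: keeping the three families of lower-order terms — constant-coefficient $\del$-type, bounded-coefficient $\delb$-type, and $O(T^2/t^2)$-weighted $\del$-type — from leaking into one another, and checking that each single peel step stays within its family. The one genuinely quantitative point is the transverse-control estimate; since $r/t$ may approach $1$ on $\Lambda'$, one must re-expand the spatial derivatives before solving for $\del_t w$, but after that the coefficient is $\ge 2$ and the rest is mere iteration of the first-order identities already in place.
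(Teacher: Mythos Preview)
Your proposal is correct and follows the same inductive ``peel off one $Z$ and use the first-order commutators'' strategy that the paper intends; the paper's own proof is a one-liner pointing back to Lemma~\ref{basic lem commutator H} and the relations \eqref{commutator del-good}. Your write-up is in fact more careful than the paper's on one point worth flagging: for the $\delb_i$-estimate with a \emph{mixed} string $Z^J$ (containing translations), the commutator $[\del_\alpha,\delb_i]$ produces $O(t^{-1})\,\del_t$ terms that do not stay in the $\delb$-world, and your transverse-control identity (solving for $\del_t w$ with coefficient $3-r^2/t^2\ge 2$) is exactly what is needed to feed those back into $\delb_j Z^{\mu}$-terms with $|\mu|\le p$. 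The paper's reference to Lemma~\ref{basic lem commutator H} covers only the pure-$H^I$ case, where $[H_j,\delb_i]=-(x^i/t)\delb_j$ already closes without this device; so your extra step genuinely fills a gap in the paper's terse argument rather than taking a different route.
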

\begin{proof}
Considering the commutative relation \eqref{commutator del-good}, the proof is the same to that of lemma \ref{basic lem commutator H}.
\end{proof}
%--------------------------------------------------------------------------------------------------------------------------------------------------------------------------------------------------------------
\subsection{Frame and the null conditions}
In this subsection, a so called ``one-one" frame, denoted by $\{\delu_{\alpha}\}$,will be introduced. Here 
$$
\aligned
&\delu_0 := \del_t,
\\
&\delu_i := \ndel_i, \quad i=1,2,3. 
\endaligned
$$
The transition matrix between this frame and the natural frame is 
$$
\Phi:=
\begin{pmatrix}
&1        &0         &0         &0        \\
&\omega^1 &1         &0         &0        \\
&\omega^2 &0         &1         &0        \\
&\omega^3 &0         &0         &1        \\
\end{pmatrix},
$$
so that 
$$
\delu_{\alpha}u = \Phi_{\alpha}^{\beta}\del_{\beta}u.
$$
Its inverse is 
$$
\Psi := \Phi^{-1} =
\begin{pmatrix}
&1         &0         &0         &0        \\
&-\omega^1 &1         &0         &0        \\
&-\omega^2 &0         &1         &0        \\
&-\omega^3 &0         &0         &1        \\
\end{pmatrix},
$$ 
so that
$$
\del_{\alpha}u = \Psi_{\alpha}^{\beta}\delu_{\beta}u.
$$
The advantage of this ``one-one" frame is that the last three vector fields are tangent to the outgoing light cone. In these directions, the gradient
of the solution has better decay near the light cone.
%The disadvantage is the matrix is not regular near the axis of time. To overcome this, one introduces 
%the so called "one-two" frame, denoted by $\{\deluu_{\alpha}\}$,
%where
%$$
%\aligned
%&\deluu_0 := \del_t,\\
%&\deluu_i := \chi(|x|/t)\omega^i \del_t + \del_i,
%\endaligned
%$$
%where $\chi(\cdot)$ is a $C^{\infty}$ function on $\mathbb{R}$ with
%$$
%\chi(x)=
%\left\{
%\aligned
%&0,\quad x \leq 1/3 ;
%\\
%&1,\quad x \geq 1/2 .
%\endaligned
%\right.
%$$
%The transist matrix between "one-two" frame and the natural frame is 
%$$
%\Phiuu:=
%\begin{pmatrix}
%&1                   &0         &0         &0        \\
%&\chi(|x|/t)\omega^1 &1         &0         &0        \\
%&\chi(|x|/t)\omega^2 &0         &1         &0        \\
%&\chi(|x|/t)\omega^3 &0         &0         &1        \\
%\end{pmatrix},
%$$
%so that 
%$$
%\deluu_{\alpha}u = \Phiu_{\alpha}^{\beta}\del_{\beta}u.
%$$
%Its inverse is 
%$$
%\Psiuu := \Phiuu^{-1} =
%\begin{pmatrix}
%&1                    &0         &0         &0        \\
%&-\chi(|x|/t)\omega^1 &1         &0         &0        \\
%&-\chi(|x|/t)\omega^2 &0         &1         &0        \\
%&-\chi(|x|/t)\omega^3 &0         &0         &1        \\
%\end{pmatrix},
%$$
%so that 
%$$
%\del_{\alpha}u = \Psiuu_{\alpha}^{\beta}\deluu_{\beta}u.
%$$
For a general two tenser $\mathcal{T}$, one can write it in the nature frame as 
$$
\mathcal{T} = T^{\alpha\beta}\del_{\alpha}\del_{\beta},
$$ 
or in the ``one-one" frame 
$$
\mathcal{T} = \underline{T}^{\alpha\beta}\delu_{\alpha}\delu_{\beta},
$$
%or in the "one-two" frame
%$$
%\mathcal{T} = \underline{\underline{T}}^{\alpha\beta}\deluu_{\alpha}\deluu_{\beta}.
%$$
In general the following result holds:
\begin{lemma}\label{basic lem frame}
For any two tenser $\mathcal{T}$ one has, in the region $\Lambda'\cap \{r\geq t/2\}$,
$$
\sum_{\alpha,\beta=0}^3|Z^I \underline{T}^{\alpha\beta}| \leq C(|I|,n)\sum_{|J|\leq|I|}\sum_{\alpha',\beta'=0}^3 |Z^J T^{\alpha'\beta'}|.
$$
%And in the region $\Lambda'$, one has
%$$
%\sum_{\alpha,\beta=0}^3|Z^I \underline{\underline{T}}^{\alpha\beta}| \leq C(|I|,n)\sum_{|J|\leq|I|}\sum_{\alpha',\beta'=0}^3 |Z^J T^{\alpha'\beta'}|.
%$$
\end{lemma}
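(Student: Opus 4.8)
The plan is to reduce the statement to an algebraic identity expressing the components $\underline{T}^{\alpha\beta}$ as linear combinations of the $T^{\alpha'\beta'}$ with coefficients built from the entries of $\Psi$, and then to control the $Z$-derivatives of those coefficients. Since $\mathcal{T} = T^{\alpha\beta}\del_{\alpha}\del_{\beta}$ and $\del_{\alpha} = \Psi_{\alpha}^{\gamma}\delu_{\gamma}$, substituting the second relation into the first gives
$$
\mathcal{T} = \big(\Psi_{\alpha}^{\gamma}\Psi_{\beta}^{\delta}T^{\alpha\beta}\big)\,\delu_{\gamma}\delu_{\delta},
\qquad\text{hence}\qquad
\underline{T}^{\gamma\delta} = \Psi_{\alpha}^{\gamma}\Psi_{\beta}^{\delta}\,T^{\alpha\beta}.
$$
By the explicit form of $\Psi$, each entry $\Psi_{\alpha}^{\beta}$ is either a constant ($0$ or $1$) or one of the functions $-\omega^i$. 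Therefore $\underline{T}^{\gamma\delta}$ is a finite sum of terms of the shape $c\,\omega^{i_1}\omega^{i_2}T^{\alpha\beta}$ with $c\in\{-1,0,1\}$, possibly with one or both $\omega$-factors absent.

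Next, applying $Z^I$ and the Leibniz rule, $Z^I\underline{T}^{\gamma\delta}$ is a finite sum, with combinatorial constants depending only on $|I|$ and $n$, of terms of the form
$$
\big(Z^{J_1}\omega^{i_1}\big)\big(Z^{J_2}\omega^{i_2}\big)\,Z^{J_3}T^{\alpha\beta},
\qquad |J_1|+|J_2|+|J_3|\leq |I|,
$$
together with analogous terms in which one or both $\omega$-factors are missing. It thus suffices to prove that in the region $\Lambda'\cap\{r\geq t/2\}$ one has $|Z^J\omega^i|\leq C(|J|,n)$. This is exactly the type of computation already performed in the proof of Lemma \ref{basic lem commutator H}: differentiating $\omega^i$ by $\del_{\alpha}$ produces factors $(\delta^i_j-\omega^i\omega^j)r^{-1}$, while differentiating it by $H_j$ produces $(\delta^i_j-\omega^i\omega^j)(t/r)$; iterating, $Z^J\omega^i$ is a finite sum of products of the functions $\omega$, $t/r$, and negative powers of $r$ (compare the family $\mathscr{F}$ and the commutator formulas \eqref{commutator base H-tangental} and \eqref{commutator del-good}). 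In $\Lambda'$ one has $r\leq t-1$, so $t\geq 1$ and then $r\geq t/2\geq 1/2$; consequently $t/r\leq 2$ and $r^{-1}\leq 2$, and all these products are bounded by a constant depending only on $|J|$ and $n$.

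Finally, combining the two steps, each term in the expansion of $Z^I\underline{T}^{\gamma\delta}$ is bounded by $C(|I|,n)\,|Z^{J_3}T^{\alpha\beta}|$ with $|J_3|\leq |I|$ and $\alpha,\beta\in\{0,\dots,3\}$; summing over the finitely many terms and over $\gamma,\delta = 0,\dots,3$ gives the asserted inequality. The only genuine point to verify is the uniform boundedness of $Z^J\omega^i$, which is the (routine) obstacle and which relies precisely on the restriction $r\geq t/2$: away from it the factors $t/r$ degenerate and the estimate breaks down, which is why the ``one-one'' frame is used only in the region near the light cone.
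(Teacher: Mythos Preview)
Your proof is correct and follows the same approach as the paper. The paper's own proof is a two-line sketch: it notes that $Z\omega$ is bounded in $\{r\geq t/2\}\cap\Lambda'$ and declares the rest ``a simple calculation''; you have simply supplied the details of that calculation (the change-of-basis identity $\underline{T}^{\gamma\delta}=\Psi_{\alpha}^{\gamma}\Psi_{\beta}^{\delta}T^{\alpha\beta}$, the Leibniz expansion, and the verification that iterated $Z$-derivatives of $\omega^i$ remain bounded there).
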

\begin{proof}
Note that in the region $\{r\geq t/2\}\cap \Lambda'$,
$$
Z \omega \leq 1.
$$
Then the proof is just a simple calculation.
\end{proof}

Now it is the time to introduce the classical null conditions. Let $\{f_i\}$ be a finite set of regular function on $\mathbb{R}^{n+1}$. 
The following quadratic forms
$$
A^{\alpha\beta\gamma j}_i \del_{\gamma}f_j \del_{\alpha\beta}f_i,
\quad
B^{\alpha\beta j}_i f_j \del_{\alpha\beta}f_i,
\quad
P^{\alpha\beta i j}\del_{\alpha}f_i\del_{\beta}f_j
$$
are said to satisfy the null conditions if for any $\xi\in \mathbb{R}^{n+1}$ such that $\xi_0\xi_0 - \sum_{i=1}^3\xi_i\xi_i = 0$, 
\begin{equation}\label{basic null conditions}
A^{\alpha\beta\gamma j}_i\xi_{\alpha}\xi_{\beta}\xi_{\gamma} = 
B^{\alpha\beta j}_i\xi_{\alpha}\xi_{\beta} = P^{\alpha\beta ij} \xi_{\alpha}\xi_{\beta} = 0.
\end{equation}
Clearly the following conditions are weaker than these null conditions:
\begin{equation}\label{basic weak null conditions}
\Au^{000j}_i = \Bu^{00 j}_i = \Pu^{00 ij} = 0.
\end{equation}
%--------------------------------------------------------------------------------------------------------------------------------------------------------------------
\subsection{Basic decay estimates}
For the convenience of statement, one defines the following norm:
$$
||u||^2_{H,p,H_T} := \sum_{|I|\leq p}\int_{H_T} |H^I u|^2 dx.
$$
To turn $L^2$ estimates into $L^{\infty}$ estimates, one needs the following Sobolev inequality, 
which is a slightly improvement of a result by H\"ormander (see lemma 7.6.1 of\cite{Ho1}).
\begin{lemma}[Sobolev-type estimate on hyperboloid] 
\label{basic lem sobolev}
Let $p(n)$ be the smallest integer $>n/2$. Any $C^{\infty}$ function defined on $\mathbb{R}^{1+n}$ 
satisfies 
\begin{equation}
\label{inequ sobolev_on_Hyper}
\sup_{H_T}t^n|u(t,x)|^2 \leq C(n) ||u||^2_{H,p(n),H_T},
\end{equation}
where $C(n)>0$ is a constant depending only on dimension $n$. 
\end{lemma}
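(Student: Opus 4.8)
The plan is to follow the scheme of H\"ormander's Lemma 7.6.1 in \cite{Ho1}: flatten the hyperboloid near an arbitrary point by a point-dependent dilation, and then invoke the classical Sobolev embedding $H^{p(n)}\hookrightarrow L^\infty$ on a Euclidean ball. Fix a point $(t_0,x_0)\in H_T$, so that $t_0=\sqrt{T^2+|x_0|^2}$, and parametrize $H_T$ by the graph map $\Phi(x)=(\sqrt{T^2+|x|^2},\,x)$, $x\in\mathbb{R}^n$; under this parametrization the natural coordinate frame on $H_T$ is exactly $\overline{\del_i}=t^{-1}H_i$, since $\del_{x^i}\Phi=(x^i/t)\del_t+\del_{x^i}=t^{-1}H_i$. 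Introduce the rescaled function
$$
v(y):=u\big(\Phi(x_0+t_0y)\big),\qquad |y|<\rho,
$$
where $\rho$ is a small \emph{universal} radius (e.g.\ $\rho=1/4$ makes $t_0/4<t<2t_0$ and $|x|<2t_0$ on the corresponding coordinate ball, distinguishing the cases $|x_0|\le t_0/2$ and $|x_0|>t_0/2$ for the lower bound). Then $v(0)=u(t_0,x_0)$, and since $p(n)>n/2$ the classical Sobolev inequality on $\{|y|<\rho\}$ yields
$$
|u(t_0,x_0)|^2=|v(0)|^2\le C(n)\sum_{|\alpha|\le p(n)}\int_{|y|<\rho}\big|\del_y^\alpha v(y)\big|^2\,dy .
$$

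The key step is to re-express the derivatives $\del_y^\alpha v$ in terms of the fields $H_j$. Because $\del_{y^j}$, acting on a function pulled back from $H_T$, equals $t_0\,\overline{\del_j}=(t_0/t)H_j$, an induction on $|\alpha|$ gives
$$
\del_y^\alpha v=\sum_{|I|\le|\alpha|}c^\alpha_I(y)\,\big(H^Iu\big)\!\big(\Phi(x_0+t_0y)\big),
$$
where each $c^\alpha_I$ is smooth on $\{|y|<\rho\}$ with $\sup|c^\alpha_I|\le C(n)$. Checking this bound is the heart of the matter: commuting a factor $t_0/t$ past an $H_j$ produces only new factors built from $H_j(1/t)=-x^j/t^2$ and $H_j x^k=t\delta_j^k$ — both identities hold on $H_T$ because $H_jt=t\,\overline{\del_j}t=x^j$ — so that after collecting powers every coefficient is a constant combination of terms of the form $(t_0/t)^{|\alpha|}(x/t)^{\beta}$ with $|\beta|\le|\alpha|-|I|$, and on the coordinate ball $t\sim t_0$, $|x|\lesssim t_0$, whence $t_0/t$ and $x/t$ are bounded by universal constants. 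Substituting the expansion into the Sobolev inequality gives
$$
|u(t_0,x_0)|^2\le C(n)\sum_{|I|\le p(n)}\int_{|y|<\rho}\big|\big(H^Iu\big)\!\big(\Phi(x_0+t_0y)\big)\big|^2\,dy .
$$

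Finally, changing variables by $x=x_0+t_0y$ (so $dy=t_0^{-n}\,dx$, and $\{|y|<\rho\}$ maps into $\mathbb{R}^n$) bounds each integral on the right by $t_0^{-n}\int_{H_T}|H^Iu|^2\,dx$; hence
$$
t_0^{\,n}\,|u(t_0,x_0)|^2\le C(n)\sum_{|I|\le p(n)}\int_{H_T}|H^Iu|^2\,dx=C(n)\,||u||^2_{H,p(n),H_T},
$$
and taking the supremum over $(t_0,x_0)\in H_T$ gives \eqref{inequ sobolev_on_Hyper}. The only non-routine point is the uniform (in $T$ and in the base point $(t_0,x_0)$) control of the coefficients $c^\alpha_I$ after rescaling; once that is secured, the statement reduces to the flat Sobolev embedding and a linear change of variables.
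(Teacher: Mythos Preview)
Your proof is correct and is essentially the argument behind H\"ormander's Lemma~7.6.1, which is precisely what the paper defers to: the paper's own ``proof'' consists of the single remark that in H\"ormander's argument the translation derivatives $\del_\alpha$ are never actually needed, so only the boosts $H_j$ appear in the right-hand side. You have reconstructed that argument in full and made the key observation explicit --- namely that after the point-dependent dilation $x=x_0+t_0y$ the coordinate vector fields become $(t_0/t)H_j$, and repeated differentiation produces only coefficients of the form $(t_0/t)^{|\alpha|}(x/t)^\beta$, which are uniformly bounded on the rescaled ball; the $t_0^{-n}$ from the Jacobian then yields the weight $t^n$.
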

\begin{proof}
One observes that the derivatives $\del_{\alpha}$ are not actually used.
\end{proof}

Now it is the time to establish the basic decay estimates.
\begin{lemma}\label{basic lm dispersive}
Suppose $u$ is the regular solution of \eqref{basic eq linear}, then $u$ satisfies the following decay estimates:
\begin{equation}
\sup_{H_T}t^n\big(|\delb_i u|^2 + |(T/t)\del_{\alpha} u|^2 + |au|^2\big) + \sup_{H_T\cap\{r\geq t/2\}}t^n|\ndel_i u|^2 \leq\sum_{|I|\leq p(n)}C(n)E_m(T, H^I u)
\end{equation}
\end{lemma}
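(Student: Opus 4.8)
The plan is to combine the Sobolev-type estimate on the hyperboloid (Lemma \ref{basic lem sobolev}) with the energy estimate (Lemma \ref{basic ineq energy}), after first showing that applying the boost fields $H^I$ to the quantities appearing on the left-hand side does not cost more than finitely many energies $E_m(T, H^J u)$. Concretely, for each of the pointwise quantities $w \in \{\delb_i u,\ (T/t)\del_\alpha u,\ au\}$ I would apply Lemma \ref{basic lem sobolev} to obtain
$$
\sup_{H_T} t^n |w|^2 \leq C(n)\sum_{|I|\leq p(n)} \int_{H_T} |H^I w|^2\, dx,
$$
and then estimate $|H^I w|$ in terms of the building blocks that are directly controlled by the energy.

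The key step is the commutator bookkeeping. For $w = (T/t)\del_\alpha u$, inequality \eqref{commutator H-T/t} gives
$$
\bigl|H^I\bigl((T/t)\del_\alpha u\bigr)\bigr| \leq \bigl|(T/t)\del_\alpha H^I u\bigr| + C(n,|I|)\sum_{|J|<|I|}\sum_{\beta=0}^3 \bigl|(T/t)\del_\beta H^J u\bigr|,
$$
and the right-hand side, integrated over $H_T$, is bounded by $\sum_{|J|\le |I|} E_m(T, H^J u)$ since the second line of \eqref{basic energy expressions} shows $\int_{H_T} |(T/t)\del_t u|^2\,dx \le E_m(T,u)$, while $\int_{H_T}|(T/t)\del_i u|^2\,dx \le E_m(T,u)$ follows from the third line. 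For $w = \delb_i u$ I would use \eqref{commutator H-par_b} from Lemma \ref{basic lem commutator H} to reduce $H^I \delb_i u$ to a sum of terms $\delb_j H^J u$ with $|J|\le|I|$, each of which is controlled by $E_m(T, H^J u)$ via the second line of \eqref{basic energy expressions}. For $w = au$, since $[H_j,\Box]=0$ and $H_j$ annihilates no mass term, $H^I(au) = a H^I u$, and $\int_{H_T}(aH^I u)^2\,dx \le \tfrac12 E_m(T, H^I u)$ directly. Summing over $|I|\le p(n)$ yields the bound for the first three terms.

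For the last term, $\sup_{H_T\cap\{r\ge t/2\}} t^n|\ndel_i u|^2$, I would again invoke Lemma \ref{basic lem sobolev}, but now the commutator identity needed is \eqref{commutator H-tangental} (valid precisely in $\{r\ge t/2\}\cap\Lambda'\cap G_{B+1}^\infty$), which expresses $H^I \ndel_i u$ in terms of $Y H^J u$ with $Y\in\mathscr D_g$ and $|J|\le|I|$. On $\{r\ge t/2\}$ one has the pointwise comparison between $\ndel_i$ and $\delb_i$ noted just after \eqref{basic energy expressions}, namely $|(\ndel_i-\delb_i)u| = |T\omega^i/(2t)|\,|(T/t)\del_t u|$, so each $\ndel_i H^J u$ is controlled by $|\delb_i H^J u| + |(T/t)\del_t H^J u|$, both energy-controlled; the other members of $\mathscr D_g$, the terms $(t-r)r^{-1}\del_\alpha = (t/r - 1)\del_\alpha \cdot$(bounded), are likewise dominated since on $t/2 \le r \le t-1$ the factor $(t-r)/r$ is bounded and $(T^2/t^2)\del_\alpha u$ is what appears — note $T^2/t^2 \le 1$ and $(T/t)\del_\alpha u$ is energy-controlled, and one needs the extra smallness $T/t$ only to absorb the $\del_\alpha$ when $r$ is close to $t$, which is automatic here because $(t-r)/t \lesssim 1$. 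After this reduction, summing over $|I|\le p(n)$ and applying the Sobolev lemma gives the claim.

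The main obstacle I anticipate is not conceptual but the careful tracking of which combinations of $\delb_i u$, $(T/t)\del_\alpha u$ the energy actually controls, and making sure that \emph{all} commutator error terms produced by $H^I$ land inside that controlled family — in particular, verifying that the passage from $\ndel_i$ to $\delb_i$ on $\{r\ge t/2\}$, combined with the non-constant coefficients from $\mathscr F$ in \eqref{commutator base H-tangental}, never forces a bare $\del_\alpha u$ (uncontrolled by the energy) to appear without the saving $T/t$ factor. Once the algebra of \eqref{basic energy expressions} is used to identify $E_m$ as simultaneously controlling $\{au,\ \delb_i u,\ (T/t)\del_\alpha u\}$, everything reduces to the two commutator lemmas and the one Sobolev inequality, exactly as the introduction advertises.
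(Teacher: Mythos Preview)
Your proposal is correct and follows exactly the approach the paper sketches in one line: combine the Sobolev-type inequality on hyperboloids (Lemma~\ref{basic lem sobolev}), the commutator estimates of Lemma~\ref{basic lem commutator H}, and the various expressions \eqref{basic energy expressions} of $E_m$. One minor simplification for the last term: rather than routing through the $\mathscr D_g$-commutator \eqref{commutator H-tangental} (which is only valid on $\{r\ge t/2\}$ and so does not mesh cleanly with the global Sobolev norm on $H_T$), you can use directly the pointwise identity you already quote, $|\ndel_i u - \delb_i u| \le (T/t)\,|(T/t)\del_t u|$, to deduce $\sup_{H_T\cap\{r\ge t/2\}} t^n|\ndel_i u|^2$ immediately from the bounds on $|\delb_i u|^2$ and $|(T/t)\del_t u|^2$ that you have already established on all of $H_T$.
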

\begin{proof}
The proof is just a combination of energy estimat\eqref{basic energy ineq trivial}, the commutation estimate \eqref{basic lem commutator H} and the Sobolev
inequality \eqref{basic lem sobolev}.
\end{proof}
\begin{remark}
From lemma \ref{basic lm dispersive}, one easily gets the following result. Taking $f=0$ and $g^{\alpha\beta} = m^{\alpha\beta}$, then the solution
of homogeneous linear wave equation has decay rate as 
$$
|\del_{\alpha} u| \leq C t^{1-n/2}T^{-1} = C t^{-(n-1)/2}(t-r+1)^{-1/2},
$$  
which is exactly the classical result.
\end{remark}

%=====================================================================================================================================================================
\section{Main result}\label{sec main}
\subsection{Formalization of the problem and statement of the main result}
One considers the following Cauchy problem associated to the quasilinear system:
\begin{equation}\label{quasilinear eq wave-KG nonlinear}
\left\{
\aligned
&\Box w_i + G_i^{j\alpha\beta}(w,\del w)\del_{\alpha\beta}w_j + D_i^2 w_i= F_i(w,\,\del w),\\ 
&w_i(B_0+1,x) = \varepsilon' {w_i}_0,\\
&\del_t w_i(B_0+1,x) = \varepsilon' {w_i}_1,
\endaligned
\right.
\end{equation}
with
$D_i^2$ constants, $D_i = 0$ for $1\leq i \leq j_0$ and $D_i>0$ for $j_0+1 \leq j\leq j_0+k_0$. For simplicity one suppose 
that $D_i \geq 1$ for $j_0+1 \leq i \leq j_0 + k_0$,
and
$$
G^{j\alpha\beta}_i(w,\,\del w) = A^{j\alpha\beta\gamma k}_i\del_{\gamma} w_k + B^{j\alpha\beta k}_i w_k + O(|w| + |w'|)^2,
$$ 
and 
$$
F_i(w,\,\del w) = P^{\alpha\beta jk}_i\del_{\alpha} w_j \del_{\beta} w_k +Q^{\alpha jk}_i w_j\del_{\alpha}w_k + R_i^{jk}w_jw_k + O(|w| + |w'|)^3.
$$
Here $m^{\alpha\beta}$ are the coefficients of wave operator, and all $A^{j\alpha\beta\gamma k}_i,B^{j\alpha\beta k}_i,P^{\alpha\beta jk}_i,Q^{\alpha jk}_i$ 
and $R_i^{jk}$ are constants with the absolute value controlled by $K$. Without lose of generality, one supposes 
$G_i^{j\alpha\beta}=G_i^{j\beta\alpha}$. To insure the hyperbolicity of the system, the following conditions of symmetry are imposed:
\begin{equation}\label{quasilinear conditions of symmetry}
G_i^{j\alpha\beta} = G_j^{i\beta\alpha}.
\end{equation}

For the convenience of proof, one makes the following convention of index:
the Latin index $i,j,k,l,...$ denote the positive entire number $1,2,3,..k_0 + j_0$. The Greek index $\alpha,\beta,\gamma,...$ denote the nonnegative 
entire number $0,1,2,3$. The Latin index with a circumflex accent above it such as $\jh$ denote the entire number $1,2,...j_0$ and the Latin index with
a hacek on it such as $\jc$ denote the entire number $j_0+1, j_0+2,...j_0+k_0$.  One also denote by
$$
u_{\jh} := w_{\jh},
\quad
v_{\kc} := w_{\kc},
$$
the different components of $w_j$.

One supposes that 
\begin{equation}\label{quasilinear decay on coefficients}
B_i^{j\alpha\beta \kh} = Q_i^{\alpha j\kh} = R_i^{\jh\kc} = R_i^{\jh\kh}=0.
\end{equation}

One also suppose the weak null conditions:
\begin{equation}\label{quasilinear null conditions}
\Au_{\ih}^{\jh000\kh}=\Bu_{\ih}^{\jh00\kh}=\Pu_{\ih}^{00\jh\kh}=0,
\end{equation}
The initial data ${w_i}_0, {w_i}_1$ are supposed to be regular functions compactly supported on the disc $|x|\leq B$.
Then in general the following global-in-time existence holds
\begin{theorem}\label{quasilinear thm main}
Suppose 
%\eqref{quasilinear decay on coefficients}, 
\eqref{quasilinear null conditions} holds.
Then there exists an $\varepsilon_0>0$ such that for any $0\leq \varepsilon' \leq \varepsilon_0$, the cauchy problem 
\eqref{quasilinear eq wave-KG nonlinear} has a unique global in time solution.
\end{theorem}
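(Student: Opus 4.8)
The plan is to run a bootstrap (continuity) argument on the hyperboloidal energy $E_m(T,Z^I w_i)$. Fix an integer $N$ large enough (of the order of $2p(n)+$ a few, say $N \geq 2p(3)+1 = 7$ or so) and set the bootstrap hypothesis: there is a constant $C_1>1$ and a small exponent $\delta>0$ so that on a maximal hyperboloidal time interval $[B+1,T^*)$ one has, for all $|I|\leq N$,
\[
E_m(T,Z^I u_{\jh})^{1/2} \leq C_1\varepsilon' T^{\delta},
\qquad
E_m(T,Z^I v_{\kc})^{1/2} \leq C_1\varepsilon' T^{\delta},
\]
with the second bound improved (no $T^\delta$ loss, or a smaller loss) for the top orders. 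The local existence theory together with the remark after Lemma~\ref{basic ineq energy} (converting standard data energy to hyperboloidal data energy) gives the initial bound $E_m(B+1,Z^I w_i)\lesssim (\varepsilon')^2$, so the hypothesis holds with room to spare at $T=B+1$. The goal is to show that on $[B+1,T^*)$ the same estimates hold with $C_1$ replaced by $C_1/2$, which by the standard continuation criterion forces $T^*=\infty$.

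The core is the improvement step. First, commute $Z^I$ through the system \eqref{quasilinear eq wave-KG nonlinear}; using $[\Box,\del_\alpha]=0$, $[\Box,H_j]=0$ and Lemmas~\ref{basic lem commutator H}, \ref{basic lem commutator}, one gets $\Box Z^I w_i + D_i^2 Z^I w_i = $ (commuted quasilinear terms) $+\, Z^I F_i + $ (lower-order commutator junk), all of which must be shown to lie in $L^2(H_s)$ with an integrable-in-$s$, or mildly growing, bound. Then Lemma~\ref{basic ineq energy} gives $E_m(T,Z^I w_i)^{1/2} \le E_m(B+1,\cdot)^{1/2} + \int_{B+1}^T \|(\text{RHS})\|_{L^2(H_s)}\,ds$. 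To estimate the RHS one needs the decay estimates of Lemma~\ref{basic lm dispersive}: the bootstrap hypothesis fed into Sobolev on hyperboloids gives pointwise control $t^{n/2}\big(|\delb_i Z^J w| + |(T/t)\del_\alpha Z^J w| + D_i|Z^J v_{\kc}|\big) \lesssim C_1\varepsilon' T^{\delta}$ for $|J|\leq p(n)$, i.e. $|J|\leq N/2$ roughly. The point of the whole method is that these are exactly the quantities — tangential ("good") derivatives, the scaled derivative $(T/t)\del_\alpha$, and the undifferentiated Klein-Gordon field — that carry extra decay, and the null conditions \eqref{quasilinear null conditions} plus the structural vanishing \eqref{quasilinear decay on coefficients} are precisely what is needed to express every nonlinear term so that at least one factor is of this favorable type.

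The hard part is the nonlinear bookkeeping in the wave-wave interactions that do not obviously contain a good factor, namely terms like $\Au_{\ih}^{\jh\alpha\beta\gamma\kh}\del_\gamma u_{\kh}\,\del_{\alpha\beta}u_{\jh}$ with all "bad" $\del_t$ components. Here one rewrites in the one-one frame $\{\delu_\alpha\}$ of Lemma~\ref{basic lem frame}: the null condition forces the coefficient of the pure $\delu_0\delu_0\delu_0=\del_t^3$ contribution to vanish, so every surviving term has a factor $\delu_i = \ndel_i$ (tangential to the light cone) or a factor $(T^2/t^2)$-type weight coming from $\del_\alpha = \Psi_\alpha^\beta \delu_\beta$ combined with the identity relating $\ndel_i$ and $\delb_i$; on the interior region $r\leq t/2$ one instead has $t\sim T$ directly. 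This gives a gain of $t^{-1}$ or $(t-r+1)^{-1}$, which is what converts the a priori $L^2$ bound into an $L^2(H_s)$ bound on the RHS that is integrable in $s$ (using $T\leq t\leq T^2$ on $H_T\cap\Lambda'$ and $ds$-integrability of negative powers of $s$). The Klein-Gordon equations are easier since the mass term puts $v_{\kc}$ itself (not just its derivatives) into the energy, so $Q,R$-type terms involving $v$-factors are controlled directly by Lemma~\ref{basic lm dispersive}; the hypotheses \eqref{quasilinear decay on coefficients} kill exactly the dangerous terms (e.g. $R_i^{\jh\kh}u u$, which would have no decay) that cannot be so controlled. Assembling these estimates, choosing $\delta$ small and then $\varepsilon_0$ small depending on $C_1,N,B,K$, closes the bootstrap; uniqueness follows from a standard energy estimate on the difference of two solutions.
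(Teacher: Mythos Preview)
Your overall architecture --- bootstrap on hyperboloidal energies, commute $Z^I$, Sobolev on $H_T$ for decay, null structure via the one-one frame --- matches the paper. But two points are not merely vague; they are gaps that would keep the argument from closing.

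First, you invoke only the flat energy inequality (Lemma~\ref{basic ineq energy}) and propose to place the commuted quasilinear terms on the right-hand side as a source. At the top order $|I|=N$ this does not work: the principal quasilinear piece $G_i^{j\alpha\beta}\del_{\alpha\beta}Z^I w_j$ carries one more derivative than $E_m(s,Z^{I} w)$ controls, so its $L^2(H_s)$ norm cannot be bounded from the bootstrap. The paper handles this via a separate \emph{curved} energy estimate (Lemma~\ref{quasilinear lem energy estimate}): one defines $E_G(s,Z^{I^*} w_i)$ incorporating the coefficients $G_i^{j\alpha\beta}$, shows $E_m\leq 3E_G$ under the bootstrap (Lemma~\ref{quasilinear lem curved energy is big}), and runs Gronwall with an additional error $M_i(s)=O(C_1\varepsilon\, s^{-1})$ coming from $\del G$ (Lemma~\ref{quasilinear lem energy curveterm is small}). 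Without this device the genuinely quasilinear terms $A_i^{j\alpha\beta\gamma k}\del_\gamma w_k\,\del_{\alpha\beta}w_j$ --- precisely the improvement over \cite{Ka} --- are not treated.

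Second, your hierarchy is inverted. The paper's bootstrap has two tiers: for all components at orders $|I^*|\leq 7$ one assumes $\big(\sum_i E_m(s,Z^{I^*}w_i)\big)^{1/2}\leq C_1\varepsilon\, s^\delta$, while for the \emph{wave} components only, at the \emph{lower} orders $|I|\leq 5$, one assumes the stronger non-growing bound $E_m(s,Z^I u_{\jh})^{1/2}\leq C_1\varepsilon$. This second tier is not cosmetic: it yields the sharp pointwise decay $|Z^J\del_\alpha u_{\jh}|\lesssim C_1\varepsilon\, t^{-1/2}s^{-1}$ (no $s^\delta$ loss) for $|J|\leq 3$, which is exactly what makes the Gronwall coefficients $M_i(s),N_i(s)$ in \eqref{quasilinear esti energy} of size $C\,C_1\varepsilon\, s^{-1}$; had they been $s^{-1+\delta}$ the exponential factor would be $\exp\big(C(C_1\varepsilon)s^\delta\big)$ and the bootstrap could not close. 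Recovering the non-growing low-order tier, in turn, uses that with two extra orders of energy available one may legitimately throw the quasilinear term to the right in the wave equations and obtain an \emph{integrable} source bound $\lesssim (C_1\varepsilon)^2 s^{-3/2+2\delta}$ via the null condition (Lemma~\ref{quasilinear lem source u}). Your proposal places the improved bound ``for the top orders,'' which is the wrong end; the mechanism is low-to-high, and without it the loop does not close.
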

\begin{remark}
\begin{itemize}
\item 
One improvement compared with \cite{Ka} is that in theorem \ref{quasilinear thm main}, the nonlinear part can have a term such as 
$\del_{\alpha}w_i\del_{tt}^2 w_i$. Further more, in the proof, the technical $L^{\infty}-L^{\infty}$ estimate will not be used.
\item
This result is also valid for the case where the initial data ${w_i}_0\in H^7(\mathbb{R}^3)$ and ${w_i}_0\in H^6(\mathbb{R}^3)$.
Compared with \cite{Ka}, where one needs ${w_i}_0\in H^{19}(\mathbb{R}^3)$ and ${w_i}_1\in H^{18}(\mathbb{R}^3)$, this is also a improvement. 
\item
The condition \eqref{quasilinear null conditions} and \eqref{quasilinear decay on coefficients} are  far from optimal. In general this method of proof
can be applied to the case where the coefficients $A^{j\alpha\beta\gamma k}_i,B^{j\alpha\beta k}_i,P^{\alpha\beta jk}_i,Q^{\alpha jk}_i$ 
and $R_i^{jk}$ are regular functions with certain increasing rates . But here one prefers to write a some-how restricted but short theorem. 
Readers may check the proof of 
lemma \ref{quasilinear lem curved energy is big} 
and \ref{quasilinear lem source u}, \ref{quasilinear lem source-commutator}, \ref{quasilinear lem energy curveterm is small} to get weaker assumptions on coefficients.
\end{itemize}
\end{remark}
%------------------------------------------------------------------------------------------------------------------------------------------------------------------
\subsection{Preparations}
To prove this result we need some preparations.
The following lemma is the principle of the so called bootstrap method:
\begin{lemma}[Principle of continuity]\label{basic lem prcp of cont.}
Let $u$ be the regular local in time solution of the following quasilinear wave equation
\begin{equation}
\left\{
\aligned
&\Boxt_{g(u,\del u)} u = F(u,\del u),\\
&u(0,x)=u_0,\quad u_t(0,x) = u_1.
\endaligned 
\right.
\end{equation}
Then if the life span time $T^*$ of $u$ is finite, one has
%\marginpar{$|\del^{\alpha}u|\leq \infty$ for $|\alpha|\leq p(n)-1$ is enough. To be precised. H\"ormander thm 6.4.11}
\begin{equation}\label{basic lem prcp of cont. condition}
\sup_{0\leq s < T^*\atop |I|\leq p(n),|J|\leq 2} E_m(s,H^I \del^J u) = \infty.
\end{equation}
\end{lemma}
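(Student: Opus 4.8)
The plan is to prove the continuation criterion (Lemma~\ref{basic lem prcp of cont.}) by contradiction: assume that $T^* < \infty$ but that the left-hand side of \eqref{basic lem prcp of cont. condition} is finite, say bounded by some constant $M$, and then show that the solution can be extended past $T^*$, contradicting maximality. The engine of the argument is a standard local existence theorem for quasilinear wave equations on a space-like slice together with the observation that a uniform bound on $\sum_{|I|\le p(n),|J|\le 2} E_m(s, H^I\del^J u)$ for $s<T^*$ translates, via the Sobolev inequality of Lemma~\ref{basic lem sobolev} and the commutator estimates of Lemma~\ref{basic lem commutator H}, into a uniform $L^\infty$ control of $u$ and its first and second derivatives on the truncated region $\Lambda'\cap G_{B+1}^{T^*}$. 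Such $L^\infty$ control in particular keeps the coefficients $g(u,\del u)$ of the principal part in a compact region where the hyperbolicity is non-degenerate, which is exactly what one needs to restart the local solver.

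The key steps, in order, are as follows. First, I would record that the hyperboloidal energies $E_m(s,H^I\del^J u)$ are equivalent, on any compact $s$-interval and on the relevant region, to the usual $H^k$-norms of $u$ restricted to a suitable foliating hypersurface — this uses the relation $d\sigma = t^{-1}\sqrt{t^2+|x|^2}\,dx$, the bounds $T\le t\le T^2$ on $H_T\cap\Lambda'$, and the finite-speed-of-propagation support property (the solution lives in $\Lambda'$), so that only a bounded spatial region is involved for each fixed $T\le T^*$. Second, from the assumed bound $M$ and Lemma~\ref{basic lm dispersive} (or directly Lemma~\ref{basic lem sobolev} applied to $H^I\del^J u$), deduce that $|u|, |\del u|, |\del^2 u|$ are uniformly bounded on $\Lambda'\cap G_{B+1}^{T^*}$; here the commutator lemma is used to pass between $H^I\del_\alpha u$ and $\del_\alpha H^I u$, and between the $Z$-fields and plain derivatives, so that an $L^2$ bound on the "good" family of derivatives yields an $L^2$ bound on $\del^J u$ with $|J|\le p(n)+2$. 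Third, fix a hyperboloid $H_{T_0}$ with $T_0$ close to but below $T^*$, use it (or a space-like graph approximating it) as a Cauchy surface, observe that the data $u|_{H_{T_0}}$, $\del_t u|_{H_{T_0}}$ are then bounded in the appropriate Sobolev space uniformly in $T_0$, and invoke the local existence theorem to continue the solution for a definite additional time that does not shrink to zero as $T_0\uparrow T^*$ — contradiction.

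The main obstacle I expect is the geometric bookkeeping in Step~3: the natural Cauchy surfaces here are hyperboloids rather than flat slices $\{t=\text{const}\}$, so one must either set up a local existence theory directly for data prescribed on $H_T$ (checking that $H_T\cap\Lambda'$ is space-like and that the induced problem is still a regular quasilinear hyperbolic Cauchy problem, with the coefficients $g(u,\del u)$ staying within the hyperbolicity cone thanks to the $L^\infty$ bound from Step~2), or foliate a neighbourhood of $H_{T^*}$ by flat slices and transfer the uniform bounds across. A secondary technical point is making sure the time of existence furnished by the local theory depends only on the Sobolev norm of the restricted data and on the ellipticity constants — which is exactly the content of a quantitative local existence statement — so that it is bounded below uniformly as $T_0\to T^*$. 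Once these two points are in place, the contradiction is immediate and the lemma follows; the calculation itself is routine and I would not reproduce it in detail.
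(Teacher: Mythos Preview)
Your approach is correct and follows the same skeleton as the paper's proof: argue by contradiction, convert the hyperboloidal energy bound into an $L^\infty$ bound on $u$, $\del u$, $\del^2 u$ via Lemma~\ref{basic lem sobolev}, and then extend the solution past $T^*$.

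The only difference is in the final step. The paper's proof is two lines: after obtaining $\sup_{s<T^*,\,|J|\le 2}|\del^J u|<\infty$ from the Sobolev lemma, it simply cites Theorem~6.4.11 of H\"ormander's book \cite{Ho1}, which is precisely the standard flat-slice continuation criterion for quasilinear wave equations. Your Step~3, by contrast, re-derives this criterion, and in doing so you worry about restarting the local solver from a hyperboloidal Cauchy surface $H_{T_0}$. That geometric detour is unnecessary: the $L^\infty$ bound you obtain in Step~2 is a genuine space-time bound on the region $\Lambda'\cap G_{B+1}^{T^*}$, not merely a bound along hyperboloids, so the ordinary $\{t=\text{const}\}$ continuation criterion applies directly. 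Your Step~1 (equivalence of hyperboloidal energies with flat $H^k$ norms) is likewise not needed. In short, your proof is sound but does more work than required; the paper outsources the continuation step to a black-box citation.
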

\begin{proof}
If \eqref{basic lem prcp of cont. condition} does not hold then from lemma \ref{basic lem sobolev}
$$
\sup_{0\leq s < T^*\atop |J|\leq 2}|\del^J u| < \infty
$$
By the theorem 6.4.11 of \cite{Ho1}, one sees that $T^* = \infty$.
\end{proof}

A second result that one needs is a more technical energy estimated will be established, which is designed for the proof. 
One defines the following curved energy on hyperboloid $H_s$:
$$
E_G(s,w_i) := 
E_m(s,w_i) + 2\int_{H_s} \big(\del_t w_i \del_{\beta}w_j G_i^{j\alpha\beta}\big) \cdot (1,-x^a/t) dx 
- \int_{H_s} \big(\del_{\alpha}w_i\del_{\beta}w_j G_i^{j\alpha\beta}\big)dx.
$$
The the following energy estimate holds:
\begin{lemma}[Energy estimate]\label{quasilinear lem energy estimate}
Let 
$\{w_i\}$ a regular solution to the Cauchy problem \eqref{quasilinear eq wave-KG nonlinear}. Suppose that the following estimates holds:
If the following assumptions holds,
\begin{equation}\label{quasilinear curved energy is big}
\sum_iE_m(s,w_i)\leq  3\sum_iE_G(s,w_i),
\end{equation}
\begin{equation}\label{quasilinear energy curveterm is small}
\int_{H_s}\bigg(\del_{\alpha}G_i^{j\alpha\beta}\del_t w_i\del_{\beta}w_j-\frac{1}{2}\del_t G_i^{j\alpha\beta}\del_{\alpha}w_i\del_{\beta}w_j \bigg)\frac{s}{t} dx 
\leq M_i(s) \sum_iE_m(s,w_i),
\end{equation}
and
\begin{equation}\label{quasilinear energy source}
\bigg(\int_{H_s} |F_i|^2dx\bigg)^{1/2} \leq L_i(s) + N_i(s)\sum_j E_m(s,w_j)^{1/2}.
\end{equation}
Then the following energy estimate holds:
\begin{equation}\label{quasilinear esti energy}
\aligned
&\quad\bigg(\sum_i E_m(s,w_i)\bigg)^{1/2} 
\\
&\leq \sqrt{3}\bigg(\sum_i E_G(B+1,w_i)\bigg)^{1/2}\exp\bigg(\int_{B+1}^s \sum_i\Big(3M_i(\tau)+\sqrt{3(j_0+k_0)}N_i(\tau)\Big)d\tau\bigg)
\\
&\quad +\int_{B+1}^s3\sum_iL_i(\tau)\exp\bigg(\int_{B+1}^\tau \sum_i\Big(3M_i(\tau')+\sqrt{3(j_0+k_0)}N_i(\tau')d\tau'\Big)\bigg)d\tau.
\endaligned
\end{equation} 
\end{lemma}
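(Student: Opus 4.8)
The plan is to write down an exact differential energy identity for the energy--momentum current attached to the quasilinear operator $g_i^{j\alpha\beta}\del_{\alpha\beta} := m^{\alpha\beta}\delta_i^j\,\del_{\alpha\beta} + G_i^{j\alpha\beta}\del_{\alpha\beta}$, to integrate it over the hyperboloidal slab $G_{B+1}^s$, to convert the resulting bulk integral into a $\tau$--integral over hyperboloids by means of $dt\,dx = (\tau/t)\,d\tau\,dx$, and finally to feed the three hypotheses into a Gronwall argument. First I would contract the equation in \eqref{quasilinear eq wave-KG nonlinear} with the multiplier $\del_t w_i$ and sum over $i$. Using the symmetry \eqref{quasilinear conditions of symmetry} (which, together with $G_i^{j\alpha\beta}=G_i^{j\beta\alpha}$, makes the bilinear form symmetric under $(i,\alpha)\leftrightarrow(j,\beta)$), a Leibniz computation puts the quasilinear term into divergence form up to two lower--order errors:
\[
\del_t w_i\, g_i^{j\alpha\beta}\del_{\alpha\beta}w_j = \del_\mu\!\Big(g_i^{j\mu\beta}\del_\beta w_j\,\del_t w_i - \tfrac12\delta^\mu_0\, g_i^{j\alpha\beta}\del_\alpha w_i\,\del_\beta w_j\Big) - (\del_\mu G_i^{j\mu\beta})\,\del_\beta w_j\,\del_t w_i + \tfrac12(\del_t G_i^{j\mu\beta})\,\del_\mu w_i\,\del_\beta w_j .
\]
Absorbing $\del_t w_i\, D_i^2 w_i = \del_t(\tfrac12 D_i^2 w_i^2)$ into the $\mu=0$ component one obtains $\del_\mu J^\mu = \del_t w_i\,F_i + (\del_\mu G_i^{j\mu\beta})\del_\beta w_j\,\del_t w_i - \tfrac12(\del_t G_i^{j\mu\beta})\del_\mu w_i\,\del_\beta w_j$ for the obvious current $J^\mu$.

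Next I would identify the flux of $J$ through a hyperboloid. Since the Euclidean conormal of $H_\tau$ satisfies $n_\mu\,d\sigma = (1,-x^a/t)\,dx$, the Minkowski part of $\int_{H_\tau}J^\mu n_\mu\,d\sigma$ is (up to the normalisation of \eqref{basic energy expressions}) $\tfrac12\sum_i E_m(\tau,w_i)$, while the remaining contribution matches, term by term, the two correction integrals in the definition of $E_G$; hence $\int_{H_\tau}J^\mu n_\mu\,d\sigma = \tfrac12\sum_i E_G(\tau,w_i)$. Applying the divergence theorem on $G_{B+1}^s$ --- whose lateral boundary $\{|x|=t-1\}$ carries no flux since the data, hence (by finite speed of propagation) $w_i$ and $F_i$, are supported in $\Lambda'\cap G_{B+1}^\infty$ --- and slicing with $dt\,dx = (\tau/t)\,d\tau\,dx$ gives
\[
\tfrac12{\textstyle\sum_i} E_G(s,w_i) - \tfrac12{\textstyle\sum_i} E_G(B+1,w_i) = \int_{B+1}^s\!\!\int_{H_\tau}\frac{\tau}{t}\Big[\del_t w_i\,F_i + (\del_\mu G_i^{j\mu\beta})\del_\beta w_j\,\del_t w_i - \tfrac12(\del_t G_i^{j\mu\beta})\del_\mu w_i\,\del_\beta w_j\Big]dx\,d\tau .
\]
The second bracket is exactly the integrand of \eqref{quasilinear energy curveterm is small} (which is why the weight $s/t$ occurs there), so it is bounded by $\big(\sum_iM_i(\tau)\big)\sum_i E_m(\tau,w_i)$; for the source term I would use $\|(\tau/t)\del_t w_i\|_{L^2(H_\tau)}^2\le E_m(\tau,w_i)$ (again a term of \eqref{basic energy expressions}), Cauchy--Schwarz on $H_\tau$, hypothesis \eqref{quasilinear energy source}, and $\sum_j E_m(\tau,w_j)^{1/2}\le\sqrt{j_0+k_0}\,\big(\sum_j E_m(\tau,w_j)\big)^{1/2}$.

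Writing $\mathcal E(s):=\sum_i E_m(s,w_i)$ and $\mathcal G(s):=\sum_i E_G(s,w_i)$, the above produces an inequality $\tfrac12\mathcal G(s)\le\tfrac12\mathcal G(B+1)+\int_{B+1}^s\big[a_0(\tau)\mathcal E(\tau)+b_0(\tau)\mathcal E(\tau)^{1/2}\big]d\tau$ with $a_0,b_0$ explicit in the $M_i,N_i,L_i$. Inserting hypothesis \eqref{quasilinear curved energy is big}, $\mathcal E\le 3\mathcal G$, turns this into $\mathcal G(s)\le\mathcal G(B+1)+\int_{B+1}^s\big[a(\tau)\mathcal G(\tau)+b(\tau)\mathcal G(\tau)^{1/2}\big]d\tau$. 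Setting $k:=\sqrt h$, where $h$ denotes the (nondecreasing) right-hand side, converts this into the linear differential inequality $k'\le\tfrac12 a\,k+\tfrac12 b$, whose integrating-factor solution gives $\mathcal G(s)^{1/2}\le\mathcal G(B+1)^{1/2}\exp\!\big(\tfrac12\!\int_{B+1}^s a\big)+\tfrac12\!\int_{B+1}^s b(\tau)\exp\!\big(\tfrac12\!\int_\tau^s a\big)d\tau$; a final use of $\mathcal E\le 3\mathcal G$ together with the bookkeeping of constants yields \eqref{quasilinear esti energy}.

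The one genuinely delicate point is the derivation of the differential identity and the identification of the boundary flux with $E_G$: one must reproduce exactly the error terms that \eqref{quasilinear energy curveterm is small} was designed to absorb, and check the placement of the $(1,-x^a/t)$ contraction so that the two corrections in $E_G$ appear with the correct coefficients. This is where the symmetry condition \eqref{quasilinear conditions of symmetry} is indispensable and where sign and convention slips are easiest to make. The Gronwall step itself is routine; extracting the precise constants $\sqrt3$, $3$ and $\sqrt{3(j_0+k_0)}$ in \eqref{quasilinear esti energy} is merely a matter of being appropriately generous when combining Cauchy--Schwarz with $\mathcal E\le 3\mathcal G$.
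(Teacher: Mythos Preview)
Your proposal is correct and follows essentially the same route as the paper: contract the system with the multiplier $\del_t w_i$, use the symmetry \eqref{quasilinear conditions of symmetry} to put the quasilinear piece in divergence form up to the two error terms of \eqref{quasilinear energy curveterm is small}, identify the hyperboloidal flux with $\tfrac12\sum_i E_G$, and then run a Gronwall argument after trading $E_m$ for $E_G$ via \eqref{quasilinear curved energy is big}. The only cosmetic difference is that the paper differentiates $\big(\sum_i E_G(s,w_i)\big)^{1/2}$ directly in $s$ rather than passing through the integral inequality and the substitution $k=\sqrt{h}$, but the two are equivalent and lead to the same constants.
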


\begin{proof}
Under the assumptions \eqref{quasilinear conditions of symmetry},
taking $\del_t w_i$ as multiplier, the standard energy estimate procedure gives
$$
\aligned
&\sum_i\bigg(\frac{1}{2} \del_t \sum_{\alpha}\big(\del_{\alpha} w_i\big)^2 + \sum_a\del_a\big(\del_a w_i \del_t w_i\big)
 + \del_{\alpha}\big(G_i^{j\alpha\beta}\del_t w_i\del_{\beta}w_j\big) 
- \frac{1}{2}\del_t\big(G_i^{j\alpha\beta}\del_{\alpha}w_i\del_{\beta}w_jG\big)\bigg)
\\
& = \sum_i \del_t w_i F_i 
+ \sum_i\bigg(\del_{\alpha}G_i^{j\alpha\beta} \del_tw_i \del_{\beta}w_j - \frac{1}{2}\del_tG_i^{j\alpha\beta}\del_{\alpha}w_i\del_{\beta}w_j\bigg)
\endaligned
$$
Then integrate in the region $G_{B+1}^s$ and use the Stokes formulae, one gets
$$
\aligned
\frac{1}{2}\sum_i \big(E_G(s,w_i) -E_G(B,w_i)\big) 
&= \int_{G_{B+1}^s} \del_t w_i F_i dx + \sum_i\int_{G_{B+1}^s}\del_{\alpha}G_i^{j\alpha\beta} \del_tw_i \del_{\beta}w_j 
\\
&\quad - \frac{1}{2}\del_tG_i^{j\alpha\beta}\del_{\alpha}w_i\del_{\beta}w_j\,dx,
\endaligned
$$
which leads to
$$
\aligned
\quad \frac{d}{ds}\sum_i E_G(s,w_i) 
&= 2\sum_i\int_{H_s}(s/t)\del_{\alpha}G_i^{j\alpha\beta} \del_tw_i \del_{\beta}w_j -(s/2t)\del_tG_i^{j\alpha\beta}\del_{\alpha}w_i\del_{\beta}w_j\,dx
\\
&\quad +2\int_{H_s}(s/t) \del_t w_i F_i dx 
\endaligned
$$
So one gets
$$
\aligned
&\quad\bigg(\sum_i E_G(s,w_i)\bigg)^{1/2}\frac{d}{ds}\bigg(\sum_i E_G(s,w_i)\bigg)^{1/2}
\\
&\leq \sqrt{3}\sum_i\bigg(\int_{H_s}\big|F_i\big|^2dx\bigg)^{1/2} E_m(s,w_i)^{1/2} + \sum_i M_i(s)\sum_jE_m(s,w_j)
\\
&\leq \sqrt{3}\bigg(\sum_i\int_{H_s}\big|F_i\big|^2dx\bigg)^{1/2} \bigg(\sum_iE_G(s,w_i)\bigg)^{1/2} + 3\sum_i M_i(s)\sum_jE_G(s,w_j),
\endaligned
$$
which leads to 
$$
\aligned
&\quad\frac{d}{ds}\bigg(\sum_i E_G(s,w_i)\bigg)^{1/2} 
\\
&\leq \sqrt{3}\bigg(\sum_i\int_{H_s}\big|F_i\big|^2dx\bigg)^{1/2} + 3\sum_i M_i(s)\bigg(\sum_jE_G(s,w_j)\bigg)^{1/2}
\\
&\leq \sqrt{3}\sum_i\bigg(L_i(s) + N_i(s)\sum_j E_G(s,w_j)^{1/2}\bigg) + 3\sum_i M_i(s)\bigg(\sum_jE_G(s,w_j)\bigg)^{1/2}
\\
&\leq \sqrt{3}\sum_i L_i(s) + \sqrt{3(j_0+k_0)} \sum_i N_i(s)\bigg(\sum_j E_G(s,w_j)\bigg)^{1/2} + 3\sum_i M_i(s)\bigg(\sum_jE_G(s,w_j)\bigg)^{1/2}.
\endaligned
$$
By Gronwall's lemma, \eqref{quasilinear esti energy} is proved.
\end{proof}
%---------------------------------------------------------------------------------------------------------------------------------------------------------------
\subsection{Proof of the main result}
\begin{proof}[proof of theorem \ref{quasilinear thm main}]
For any  $\varepsilon,C_1 > 0$, by theorem \ref{appendix Thm A}, there exists an $\varepsilon_0(B)>0$ such that for any 
$0\leq \varepsilon' \leq \varepsilon_0(B)$, $E_m(B+1,Z^I w_i) \leq \varepsilon C_1$.
Then one uses the continuity method. Suppose that on a interval $[B+1,T)$, the energy $E_m(s,Z^Iw_i)$  satisfy 
\begin{equation}\label{quasilinear proof energy assumption 0}
\aligned
&\sum_{|I^*|\leq 7}\bigg(\sum_i E_m(s,Z^I w_i)\bigg)^{1/2} \leq C_1\varepsilon s^{\delta},
\\
&E_m(s,Z^I u_{\ih}) \leq C_1\varepsilon,\quad \text{for}\quad 1\leq \ih\leq j_0. \quad |I|\leq 5.
\endaligned
\end{equation}
where $0< \delta \leq 1/6$.
By lemma \ref{basic lem commutator}, one has the following $L^2$ estimates:
\begin{equation}\label{quasilinear proof energy assumption}
\aligned
&\sum_{\ih,\alpha\atop |I|\leq 5}\bigg(\int_{H_s}\big|(s/t)Z^I \del_{\alpha} u_{\ih}\big|^2 dx\bigg)^{1/2} 
+ \sum_{\ih,a\atop |I|\leq 5}\bigg(\int_{H_s\cap\{r\geq r/2\}}\big| Z^I \delb_a u_{\ih}\big|^2 dx\bigg)^{1/2} \leq C(n) C_1 \varepsilon,
\\
&\sum_{\ih,\alpha\atop|I^*|\leq 7}\bigg(\int_{H_s}\big|(s/t)Z^{I^*} \del_{\alpha} w_i\big|^2 dx\bigg)^{1/2}
+ \sum_{\ih,a\atop |I^*|\leq 7}\bigg(\int_{H_s\cap\{r\geq r/2\}}\big| Z^{I^*} \delb_a w_{\ih}\big|^2 dx\bigg)^{1/2} \leq C(n) C_1 \varepsilon s^{\delta},
\\
&\sum_{\jc\atop|I^*|\leq 7}\bigg(\int_{H_s}\big|Z^{I^*}v_{\jc}\big|^2dx\bigg)^{1/2} \leq C(n) C_1 \varepsilon s^{\delta}.
\endaligned
\end{equation}
Also, by lemma \ref{basic lm dispersive} and lemma \ref{basic lem decay on u}, one has, for $|J|\leq 3$ and $|J^*|\leq 5$,
\begin{equation}\label{quasilinear proof basic decay}
\aligned
&\sup_{H_s}\big|s t^{1/2} \del_{\alpha}Z^J u_{\jh}\big| + \sup_{H_s\cap \{r\geq t/2\}}\big|t^{3/2}\ndel_a Z^J u_{\jh}\big| \leq C(n)C_1 \varepsilon,
\\
&\sup_{H_s}\Big(\big|s t^{1/2} \del_{\alpha}Z^{J^*} v_{\kc}\big|\ + \big|t^{3/2}Z^{J^*}v_{\kc}\big| \Big) 
+ \sup_{H_s\cap \{r\geq t/2\}}\big|t^{3/2}\ndel_a Z^{J^*} v_{\kc}\big|
\leq C(n)C_1\varepsilon s^{\delta}.
\endaligned
\end{equation}
From lemma \ref{basic lem commutator}, one gets
\begin{equation}\label{quasilinear proof used decay}
\aligned
&\sup_{H_s}\big|s t^{1/2} Z^J \del_{\alpha}u_{\ih}\big| + \sup_{H_s\cap \{r\geq t/2\}}\big|t^{3/2} Z^J \ndel_a u_{\ih}\big| \leq C(n)C_1 \varepsilon,
\\
&\sup_{H_s}\Big(\big|s t^{1/2} Z^{J^*}\del_{\alpha} v_{\kc}\big|\ + \big|t^{3/2}Z^{J^*}v_{\kc}\big| \Big) 
+ \sup_{H_s\cap \{r\geq t/2\}}\big|t^{3/2}Z^{J^*} \ndel_a v_{\kc}\big|
\leq C(n)C_1\varepsilon s^{\delta}.
\endaligned
\end{equation}
One derives the equation \eqref{quasilinear eq wave-KG nonlinear} with respect to a product $Z^I$, and gets
\begin{equation}\label{quasilinear proof derived eq 1}
\Box Z^Iw_i + G_i^{j\alpha\beta}\del_{\alpha\beta}Z^Iw_j +  D_i^2Z^I w_i = -[Z^I,G_i^{j\alpha\beta}\del_{\alpha\beta}]w_j + Z^IF_i(w,w').
\end{equation}
One also writes the first $j_0$ equations:
\begin{equation}\label{quasilinear proof derived eq 2}
\Box Z^I u_{\jh} = Z^I F_{\jh} - Z^I\big(G_{\jh}^{k\alpha\beta}\del_{\alpha\beta}w_k\big).
\end{equation}
For technical reason, when $|I|=7$ and $|J|=6$,  the following system will also be considered. 
\begin{equation}\label{quasilinear proof derived eq 3}
\left\{
\aligned
&\Box Z^Iw_i + G_i^{j\alpha\beta}\del_{\alpha\beta}Z^I w_j +  D_i^2Z^I w_i = -[Z^I,G_i^{j\alpha\beta}\del_{\alpha\beta}]w_j + Z^IF_i(w,w').
\\
&\Box Z^Jw_i + G_i^{j\alpha\beta}\del_{\alpha\beta}Z^J w_j +  D_i^2Z^I w_i = -[Z^J,G_i^{j\alpha\beta}\del_{\alpha\beta}]w_j + Z^JF_i(w,w').
\endaligned
\right.
\end{equation}
Then
by \eqref{quasilinear proof derived eq 1}, the energy estimate \eqref{basic energy ineq trivial} gives for any $|I|\leq 5$ and $1\leq \jh\leq j_0$,
$$
E_m(s,Z^Iu_{\ih})^{1/2} 
\leq E_m(s,Z^I u_{\ih})^{1/2} + \int_{B+1}^s \bigg(\int_{H_\tau}\big|Z^IF_{\ih} - Z^I\big(G_{\ih}^{j\alpha\beta}w_j\big)\big|^2dx\bigg)^{1/2}d\tau.
$$

By \eqref{quasilinear proof derived eq 3}, the energy estimate \eqref{quasilinear esti energy} gives,
$$
\aligned
&\quad\bigg(\sum_{i\atop 6\leq |I^*|\leq 7} E_m(s,Z^{I^*}w_i)\bigg)^{1/2}
\\
&\leq \sqrt{3}\bigg(\sum_{i\atop 6\leq |I^*|\leq7} E_m(B+1,Z^{I^*}w_i)\bigg)^{1/2}\exp\bigg(\int_{B+1}^s\sum_i\big(3M_i(\tau)+ \sqrt{3(j_0+k_0)}N_i(\tau)\big)d\tau\bigg)
\\
&\quad+\int_{B+1}^s3\sum_iL_i(\tau)\exp\bigg(\int_{B+1}^s\sum_i\big(3M_i(\tau')+ \sqrt{3(j_0+k_0)}N_i(\tau')\big)d\tau'\bigg)d\tau.
\endaligned
$$
where
$$
\bigg(\int_{H_s}\big|Z^{I^*}F_i - [Z^{I^*}, G_i^{j\alpha\beta}\del_{\alpha\beta}]w_j \big|^2dx\bigg) 
\leq L_i(s) + N_i(s)\sum_{i\atop 6\leq|I|\leq7}E_m(s,Z^I w_i)^{1/2}
$$
and
$$
\int_{H_s}\frac{s}{t}\bigg(\big(\del_{\alpha}G_i^{j\alpha\beta}\big)\del_t Z^{I^*} w_i \del_{\beta}Z^{I^*} w_j - 
\frac{1}{2}\big(\del_t G_i^{j\alpha\beta}\big)\del_{\alpha}Z^{I^*} w_i \del_{\beta}Z^{I^*} w_i\bigg)dx
\leq M_i(s) \sum_i E_m(s,Z^{I^*} w_i)
$$

And By \eqref{quasilinear proof derived eq 2}, for any $|I|\leq 5$,
$$
\aligned
\quad\bigg(\sum_i E_m(s,Z^I w_i)\bigg)^{1/2}
&\leq \sqrt{3}\bigg(\sum_i E_m(B+1,Z^I w_i)\bigg)^{1/2}\exp\bigg(\int_{B+1}^s\sum_i3M_i(\tau)d\tau\bigg)
\\
&\quad+\int_{B+1}^s3\sum_iL_i(\tau)\exp\bigg(\int_{B+1}^s\sum_i3M_i(\tau')d\tau'\bigg)d\tau,
\endaligned
$$
where
$$
\bigg(\int_{H_s}\big|Z^IF_i - [Z^I, G_i^{j\alpha\beta}\del_{\alpha\beta}]w_j \big|^2dx\bigg) \leq L_i(s)
$$
and
$$
\int_{H_s}\frac{s}{t}\bigg(\big(\del_{\alpha}G_i^{j\alpha\beta}\big)\del_t Z^I w_i \del_{\beta}Z^I w_j - 
\frac{1}{2}\big(\del_t G_i^{j\alpha\beta}\big)\del_{\alpha}Z^I w_i \del_{\beta}Z^I w_i\bigg)dx
\leq M_i(s) \sum_i E_m(s,Z^I w_i)
$$

Suppose the following estimates can be deduced from \eqref{quasilinear decay on coefficients}, \eqref{quasilinear null conditions},
 \eqref{quasilinear proof energy assumption}, \eqref{quasilinear proof basic decay} and \eqref{quasilinear proof used decay}:
For any $|I|\leq 5$,
\begin{equation}\label{quasilinear proof source u}
\aligned
\bigg(\sum_{\jh}\int_{H_{\tau}}\big|Z^IF_{\ih} - Z^I\big(G_{\ih}^{j\alpha\beta}w_j\big)\big|^2 dx\bigg)^{1/2} 
&\leq C(n)K(C_1\varepsilon)^2\tau^{-1-\theta}
\\
&= L_{\jh}(s)
\endaligned
\end{equation}
with 
$$
\int_{B+1}^{\infty} L_{\jh} ds = C(n)K(C_1\varepsilon)^2\theta^{-1}(B+1)^{-\theta} = L < \infty.
$$
And for any $|I^*|\leq 7$ (if $|I^*|\leq 5$ then $N_i=0$):
\begin{equation}\label{quasilinear proof source}
\aligned
&\quad\bigg(\int_{H_{\tau}}\big|[G_i^{j\alpha\beta}\del_{\alpha\beta},Z^{I^*}]w_j + Z^{I^*}F_i(w,w')\big|^2 dx\bigg)^{1/2}
\\
&\leq C(n)K(C_1\varepsilon)^2 \tau^{-1} + C(n)KC_1\varepsilon \tau^{-1}\sum_{j\atop6\leq|I|\leq |I^*|}E_m(\tau, Z^{I^*} w_j)^{1/2}
\\
&=L_i(s) + N_i(s)\sum_{j\atop 6\leq|I|\leq |I^*|}E_m(\tau, Z^{I^*} w_i)^{1/2},
\endaligned
\end{equation}
\begin{equation}\label{quasilinear proof energy curveterm}
M_i(\tau) = C(n)KC_1\varepsilon \tau^{-1}.
\end{equation}
Then, for $|I|\leq 6$
$$
\aligned
E_m(s,Z^I u_{\jh})^{1/2} \leq E_m(s,Z^I u_{\jh})^{1/2} + C(n)K(C_1\varepsilon)^2\theta^{-1}(B+1)^{-\theta}.
\endaligned
$$
$$
\bigg(\sum_i E_m(s,Z^Iw_i)\bigg)^{1/2} \leq (\sqrt{3}C_0 + C_1)(s/B+1)^{C(j_0,k_0)KC_1\varepsilon}.
$$

Similarly, for $6\leq|I^*|\leq7$,%\marginpar{to be re-calculated.}
$$
\bigg(\sum_i E_m(s,Z^{I^*}w_i)\bigg)^{1/2} \leq (\sqrt{3}C_0 + C_1)(s/B+1)^{C(j_0,k_0)KC_1\varepsilon}.
$$
Now consider 
$$
T^*(\varepsilon):= \sup_{T}\big(\text{for any }B+1\leq s\leq T, \text{\eqref{quasilinear proof energy assumption} holds}\big).
$$
By continuity, at least one of the following two equations holds:
$$
\aligned
&\bigg(\sum_{i\atop |I^*|\leq 7}E_m(s,Z^I w_i)\bigg)^{1/2} = C_1\varepsilon s^{\delta},
\\
&E_m(s,Z^I u_{\ih})^{1/2} = C_1\varepsilon,\quad{for}\quad 1\leq \ih\leq j_0.
\endaligned
$$

When $KC(j_0,k_0)C_1\varepsilon = \delta,$ $C_1\geq 2C_0$ and $(B+1)^{\delta}> 2$
$$
\bigg(\sum_{i\atop |I^*|\leq 9}E_m(s,Z^I w_i)\bigg)^{1/2} < C_1\varepsilon s^{\delta}.
$$
When $KC(j_0,k_0)C_1\varepsilon = \delta,$ $C_1\geq 2C_0$ and $(B+1)^{\theta} > 2\delta^2\theta^{-1}(KC(j_0,k_0))^{-1}$,
$$
E_m(s,Z^I u_{\ih})^{1/2} < C_1\varepsilon,\quad{for}\quad 1\leq \ih\leq j_0.
$$
So for $\varepsilon$ sufficiently small, $C_1$ and $B$ sufficiently large, on time interval $[B+1,\infty)$, \eqref{quasilinear proof energy assumption} holds.
Then lemma \ref{basic lem prcp of cont.} completes the proof. 
\end{proof}
%****************************************************************************************************************************************************
The remained work is to verify \eqref{quasilinear curved energy is big} and \eqref{quasilinear proof source u} - \eqref{quasilinear proof energy curveterm}
under the assumption of \eqref{quasilinear decay on coefficients},\eqref{quasilinear null conditions},\eqref{quasilinear proof energy assumption} 
and \eqref{quasilinear proof used decay}.

The following lemma is to guarantee \eqref{quasilinear curved energy is big}.
\begin{lemma}\label{quasilinear lem curved energy is big}
Suppose \eqref{quasilinear decay on coefficients} and \eqref{quasilinear proof basic decay} hold. Then following estimate holds
$$
\sum_iE_g(s,Z^I w_i) \leq 3\sum_iE_m(s,Z^I w_i).
$$
\end{lemma}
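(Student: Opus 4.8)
The plan is to prove the stronger statement that $E_G$ and $E_m$ are \emph{comparable}, namely $\tfrac12\sum_iE_m(s,Z^Iw_i)\le\sum_iE_G(s,Z^Iw_i)\le\tfrac32\sum_iE_m(s,Z^Iw_i)$; this contains the asserted inequality, and the value $3$ in the statement is far from sharp. Subtracting the two definitions, $\sum_iE_G(s,Z^Iw_i)-\sum_iE_m(s,Z^Iw_i)$ equals the integral over $H_s$ of
\[
\sum_{i,j}\Big(2\,\del_tZ^Iw_i\,\del_\beta Z^Iw_j\,\big(G_i^{j0\beta}-\tfrac{x^a}{t}G_i^{ja\beta}\big)-G_i^{j\alpha\beta}\,\del_\alpha Z^Iw_i\,\del_\beta Z^Iw_j\Big),
\]
a quadratic form in $\del Z^Iw$ whose coefficients are entries of $G_i^{j\alpha\beta}(w,\del w)$ carrying its \emph{original}, undifferentiated arguments. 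So everything reduces to bounding this correction by $CKC_1\varepsilon\,(B+1)^{-\kappa}\sum_iE_m(s,Z^Iw_i)$ for some $\kappa>0$, which for $\varepsilon$ small and $B$ large is $\le\tfrac12\sum_iE_m(s,Z^Iw_i)$.

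First I would record the pointwise smallness of the coefficients. Since $G_i^{j\alpha\beta}$ is evaluated at order zero, only $\sup_{H_s}|w|$ and $\sup_{H_s}|\del w|$ enter; by the structural hypothesis \eqref{quasilinear decay on coefficients} the zeroth-order-in-$w$ part of $G$ (the $B$-term) involves only the Klein--Gordon components $v_{\kc}$, and lemma \ref{basic lm dispersive}, lemma \ref{basic lem decay on u} together with \eqref{quasilinear proof basic decay} give $\sup_{H_s\cap\Lambda'}\big(|\del w|+|v_{\kc}|\big)\le CC_1\varepsilon\,s^{-\kappa_0}$ for a positive $\kappa_0$ (the admissible growth $s^\delta$ with $\delta\le 1/6$ is beaten by the $t$-decay, since $t\ge s$ on $H_s\cap\Lambda'$); the cubic-and-higher remainder $O((|w|+|w'|)^2)$ is of the same order. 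Hence $\sup_{H_s}|G_i^{j\alpha\beta}|\le CKC_1\varepsilon\,(B+1)^{-\kappa_0}$.

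Then I would estimate the quadratic form by splitting $H_s$ into the interior $\{r\le t/2\}$ and the exterior $\{r\ge t/2\}$. On the interior $T/t$ is bounded below, so $E_m(s,Z^Iw_i)$ controls the full $\int_{H_s\cap\{r\le t/2\}}|\del Z^Iw_i|^2$ and the correction there is at once $\le C\sup|G|\sum_iE_m$. On the exterior I would pass to the ``one--one'' frame $\{\delu_\alpha\}$: the conormal $(1,-x/t)$ of $H_s$ equals $\delu^0+O(T^2/t^2)\delu^i$, and $\del_\alpha=\Psi_\alpha^{\beta}\delu_\beta$ with $\delu_0=\del_t$ the single ``bad'' field, so after expansion the only term of the correction not already carrying a manifestly good factor ($\delb$, $(T/t)\del$, or a tangential $\ndel$) is $\underline G_i^{j00}\,\del_tZ^Iw_i\,\del_tZ^Iw_j$, the $O(T^2/t^2)$ errors being harmless because $\int_{H_s}(T/t)^2|\del_tZ^Iw_i|^2\le E_m(s,Z^Iw_i)$. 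For the wave--wave part $\underline G_{\ih}^{\jh00}$ the weak null condition \eqref{quasilinear null conditions} removes precisely the dangerous pieces $\underline A_{\ih}^{\jh000\kh}\del_tu_{\kh}$ and $\underline B_{\ih}^{\jh00\kh}u_{\kh}$, leaving only tangential derivatives $\ndel_cw$ and Klein--Gordon quantities $v_{\kc},\del v_{\kc}$; the remaining entries of the correction in which an $i$ or a $j$ is a Klein--Gordon index carry no null structure but are handled directly from $\sup_{H_s}|G|$. Summing over $i$, over $|I|$, and invoking the commutation lemmas \ref{basic lem commutator H} and \ref{basic lem commutator} then finishes the argument.

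The step I expect to be the main obstacle is exactly matching the gain coming from the weak null condition with the degeneracy of the hyperboloidal energy near the light cone, that is, showing that the residual term $\underline G_i^{j00}\,\del_tZ^Iw_i\,\del_tZ^Iw_j$ in the exterior can, after redistributing the $(T/t)$-weights carried by $\del_t Z^Iw$, be bounded by a genuinely small multiple of the controlled quantities $(T/t)|\del_tZ^Iw|$ and $|\delb Z^Iw|$; the delicate point is the $\del_t v_{\kc}$-contribution to $\underline G_i^{j00}$, whose available decay on $H_s$ must be played against the at most $(t/T)^2$ weight loss, and this is where the hypotheses \eqref{quasilinear decay on coefficients} and \eqref{quasilinear null conditions}, the exact bootstrap rate $\delta\le 1/6$, and the $L^2$-bounds \eqref{quasilinear proof used decay} all have to be used in concert.
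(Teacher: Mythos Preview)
Your overall strategy—showing that $\big|\sum_iE_G(s,Z^Iw_i)-\sum_iE_m(s,Z^Iw_i)\big|$ is a small multiple of $\sum_iE_m(s,Z^Iw_i)$—is the right one, and it is exactly what the paper does. But you take a much more elaborate route than is necessary, and the ``main obstacle'' you anticipate does not actually arise.

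The paper's argument is entirely direct and uses neither the null condition \eqref{quasilinear null conditions} nor the ``one--one'' frame. One simply writes
\[
\Big|\sum_iE_G-\sum_iE_m\Big|\le C\int_{H_s}\Big(\sum_{i,j,\alpha,\beta}|G_i^{j\alpha\beta}|\Big)\sum_{k,\gamma}|\del_\gamma Z^Iw_k|^2\,dx
= C\int_{H_s}\Big((t/s)^2\sum|G|\Big)\sum_{k,\gamma}\big|(s/t)\del_\gamma Z^Iw_k\big|^2\,dx,
\]
and observes that the weighted factor $(t/s)^2\sum|G|$ is \emph{uniformly bounded} on $H_s\cap\Lambda'$. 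Indeed, from \eqref{quasilinear decay on coefficients} one has $\sum|G|\le CKC_1\varepsilon\big(t^{-1/2}s^{-1}+t^{-3/2}s^\delta\big)$, and on $\Lambda'$ the crucial relation $t\le s^2$ gives
\[
(t/s)^2\big(t^{-1/2}s^{-1}+t^{-3/2}s^\delta\big)=t^{3/2}s^{-3}+t^{1/2}s^{-2+\delta}\le 1+s^{-1+\delta}\le 2.
\]
Since $\int_{H_s}\sum_{k,\gamma}|(s/t)\del_\gamma Z^Iw_k|^2\,dx\le C\sum_kE_m(s,Z^Iw_k)$, this yields $|E_G-E_m|\le CKC_1\varepsilon\sum_iE_m$, and taking $KC_1\varepsilon\le\delta$ small finishes the proof.

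By contrast, you split into interior and exterior, pass to the $\{\delu_\alpha\}$ frame on $\{r\ge t/2\}$, and invoke the weak null condition to kill the $\underline G_i^{j00}\del_tZ^Iw_i\,\del_tZ^Iw_j$ term. This machinery is not needed here: the $(t/s)^2$ loss you worry about is already absorbed by the pointwise decay of $G$, thanks to $t\le s^2$ on $\Lambda'$. Your own observation that the mixed and Klein--Gordon blocks ``are handled directly from $\sup_{H_s}|G|$'' is in fact the whole argument—once you see it works there, it works uniformly and the null structure is irrelevant for this lemma. The null condition is genuinely needed elsewhere (Lemmas~\ref{quasilinear lem source u} and \ref{quasilinear lem source-commutator}), but not for comparing $E_G$ with $E_m$.
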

\begin{proof}
One notice that 
$$
\sum_{i,j,\alpha,\beta} \big|G_i^{j\alpha\beta}\big| \leq CK\sum_i\big(|\del w_i| + |w_i|\big).
$$
Then by simple calculation
$$
\aligned
\sum_i\big|E_G(s,w_i) - E_m(s,w_i) \big| 
&=  \big|2\int_{H_s} \big(\del_t w_i \del_{\beta}w_j G_i^{j\alpha\beta}\big) \cdot (1,-x^a/t) dx 
- \int_{H_s} \big(\del_{\alpha}w_i\del_{\beta}w_j G_i^{j\alpha\beta}\big)dx\big|
\\
&\leq 2\int_{H_s}\bigg(\sum_{i,j,\alpha,\beta}\big|G_i^{j\alpha\beta}\big|\bigg)\cdot\bigg(\sum_{\alpha,k}|\del_{\alpha}w_k|^2\bigg) dx
\\
&\leq 2CK\int_{H_s}\sum_i\big(|\del w_i| + |w_i|\big)\cdot\bigg(\sum_{\alpha,k}|\del_{\alpha}w_k|^2\bigg)dx
\\
&\leq 2CKC(n)C_1\varepsilon \int_{H_s}\big(t^{-3/2}s^{\delta} + t^{-1/2}s^{-1}\big)(t/s)^2\cdot\bigg(\sum_{\alpha,k}|(s/t)\del_{\alpha}w_k|^2\bigg)dx
\\
&=    2CKC(n)C_1\varepsilon \int_{H_s}\big(t^{1/2}s^{-2+\delta} + t^{3/2}s^{-3}\big)\cdot\bigg(\sum_{\alpha,k}|(s/t)\del_{\alpha}w_k|^2\bigg)dx
\\
&\leq 2C\delta \sum_iE_m(s,w_i).
\endaligned
$$
Here one takes $KC_1C(n)\varepsilon \leq \delta$. When $0<\delta$ small enough, 
$$
\sum_i\big|E_G(s,w_i) - E_m(s,w_i) \big| \leq 2\sum_iE_m(s,w_i),
$$
which complete the proof.
\end{proof}
%**********************************************************************************************************************************************************************
All of the following lemmas are $L^2$ type estimates and their proofs are similar. The only simple idea used is to subtitle the decay 
estimates \eqref{quasilinear proof used decay} and the energy assumption \eqref{quasilinear proof energy assumption} into the expression. 
The calculation is long and tedious and will be left in Appendix B.

The first lemma is to guarantee \eqref{quasilinear proof source u}.
\begin{lemma}\label{quasilinear lem source u}
Suppose that \eqref{quasilinear decay on coefficients}, \eqref{quasilinear null conditions} and \eqref{quasilinear proof used decay} hold,
Then for any $|I|\leq 5$,
\begin{equation}\label{quasilinear lem esti-source1 u}
\bigg(\int_{H_s}\big|Z^I F_{\ih}(w,w')\big|^2dx\bigg)^{1/2} \leq C(n)(C_1\varepsilon)^2 K s^{-3/2+2\delta},
\end{equation}
and
\begin{equation}\label{quasilinear lem esti-source2 u}
\bigg(\int_{H_s}\big|Z^I\big(G_{\ih}^{j\alpha\beta}\del_{\alpha\beta}w_j\big)\big|^2dx\bigg)^{1/2} \leq C(n)(C_1\varepsilon)^2 K s^{-3/2 + 2\delta}.
\end{equation}
\end{lemma}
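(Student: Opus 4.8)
The plan is to reduce $Z^I F_{\ih}$ and $Z^I\big(G_{\ih}^{j\alpha\beta}\del_{\alpha\beta}w_j\big)$ to finite sums of quadratic model products, and to estimate each by placing the lower-order factor in $L^\infty$ (via the decay bounds \eqref{quasilinear proof used decay}) and the higher-order factor in (weighted) $L^2$ (via \eqref{quasilinear proof energy assumption}), the needed extra gain coming either from the weak null structure \eqref{quasilinear null conditions} or from the faster decay of the Klein--Gordon components. First I would apply the Leibniz rule, and then the commutator estimates of Lemma \ref{basic lem commutator} to replace $Z^{I'}\del_\alpha w_j$ by $\del_\alpha Z^J w_j$ (up to strictly lower-order terms of the same shape) and $Z^{I'}\del_{\alpha\beta}w_j$ by second derivatives $\del_\alpha\del_\beta Z^J w_j$, which are again of the form $Z^{J'}w_j$ with $|J'|\le |I|+2\le 7$, still inside the range controlled by the bootstrap. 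After this reduction, both quantities are finite sums of products $\big(Z^{I_1}\Phi_1\big)\big(Z^{I_2}\Phi_2\big)$ (plus cubic and higher-order terms, estimated the same way), where each $\Phi$ is $w_j$, $\del w_j$ or $\del^2 w_j$, $|I_1|+|I_2|\le 5$, so that $\min(|I_1|,|I_2|)\le 2$.

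Next I would split into three families according to whether each unknown appearing is a wave component $u_{\jh}$ or a Klein--Gordon component $v_{\kc}$. \emph{(a)} Every term containing a factor $v_{\kc}$ or a derivative of it: here one exploits that the Klein--Gordon components and their derivatives decay faster than a generic wave derivative (in particular $|Z^J v_{\kc}|\lesssim C_1\varepsilon\,t^{-3/2}s^{\delta}$), so that together with the $(s/t)$-weighted $L^2$ control of the companion factor the $H_s$-integral of the square closes with the required rate and some room to spare. \emph{(b)} The wave--wave ``non-derivative'' terms $R_{\ih}^{\jh\kh}w_{\jh}w_{\kh}$, $Q_{\ih}^{\alpha\jh\kh}w_{\jh}\del_\alpha w_{\kh}$ in $F_{\ih}$ and $B_{\ih}^{j\alpha\beta\kh}w_{\kh}\del_{\alpha\beta}w_j$ in $G_{\ih}\del^2 w$: by the hypothesis \eqref{quasilinear decay on coefficients} all these coefficients vanish, so there is nothing to estimate. (This is precisely why \eqref{quasilinear decay on coefficients} is needed: for the wave components one controls only derivatives of $u_{\jh}$, not $u_{\jh}$ itself.) \emph{(c)} The remaining, genuinely delicate, wave--wave ``derivative--derivative'' terms $P_{\ih}^{\alpha\beta\jh\kh}\del_\alpha u_{\jh}\del_\beta u_{\kh}$ in $F_{\ih}$ and $A_{\ih}^{\jh\alpha\beta\gamma\kh}\del_\gamma u_{\kh}\del_{\alpha\beta}u_{\jh}$ in $G_{\ih}\del^2 w$.

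For family (c) I would rewrite $\del_\alpha$ in the one-one frame $\{\delu_\alpha\}$ of Section \ref{sec basic}; since the weak null conditions \eqref{quasilinear null conditions} give $\Pu_{\ih}^{00\jh\kh}=\Au_{\ih}^{\jh000\kh}=0$, after the change of frame every surviving term carries at least one tangential derivative $\ndel_a$ ($a=1,2,3$) on one of the $u$-factors (the terms coming from the $Z^I$-differentiation of the transition coefficients $\omega^i$ are, by Lemma \ref{basic lem commutator H}, bounded on $\{r\ge t/2\}$, which is exactly why one splits $H_s$ there). On $\{r\le t/2\}$, where $t\sim s$ and $s/t\sim 1$, no null structure is needed: $|Z^{I_1}\del u_{\jh}|\lesssim C_1\varepsilon\,t^{-1/2}s^{-1}\sim C_1\varepsilon\,s^{-3/2}$ on the low-order factor against the (now essentially unweighted) $L^2$ bound on the other one already gives the rate. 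On $\{r\ge t/2\}$, where $t$ can be as large as $s^2$, one uses the good derivative: if the $\ndel_a$ is on the low-order ($\le 3$) factor, use $|\ndel_a Z^{I_1}u_{\jh}|\lesssim C_1\varepsilon\,t^{-3/2}$; if it is on the high-order factor, pair $\|\ndel_a Z^{I_2}u_{\jh}\|_{L^2(H_s\cap\{r\ge t/2\})}\lesssim C_1\varepsilon\,s^{\delta}$ (from \eqref{quasilinear proof energy assumption} via $\ndel_a=\delb_a+O(T/t)\del_t$) with $|Z^{I_1}\del u_{\jh}|\lesssim C_1\varepsilon\,t^{-1/2}s^{-1}$. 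In each case the $H_s$-integral is controlled by the elementary bound
$$
\int_{H_s} t^{-3}\,\big|\del Z^{I_2}w\big|^2\,dx
= s^{-2}\int_{H_s} t^{-1}\Big(\tfrac{s}{t}\Big)^{2}\big|\del Z^{I_2}w\big|^2\,dx
\;\le\; s^{-3}\,\big(C_1\varepsilon\,s^{\delta}\big)^{2},
$$
using $s\le t\le s^2$, together with its analogues for the other sub-cases; multiplying by the $C_1\varepsilon$ from the sup-norm factor yields $s^{-3/2+\delta}$ for each such term, hence the claimed $s^{-3/2+2\delta}$.

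The hard part is family (c): one must check that, whichever of the two slots the tangential derivative is forced into, the matching of the $L^\infty$ decay against the (weighted) $L^2$ always reproduces the rate, and that the passage to the one-one frame and the resulting interior/exterior split introduce no uncontrolled terms. Because the target $s^{-3/2+2\delta}$ is essentially sharp — the $v$-terms dictate the $s^{2\delta}$ and the wave--wave terms only just furnish the $s^{-3/2}$ — every power of $t/s$ and every order count must be tracked without slack; this bookkeeping, together with the entirely parallel treatment of \eqref{quasilinear lem esti-source2 u} (where $\del_{\alpha\beta}w_j$ simply costs two in the order count, staying $\le 7$, and \eqref{quasilinear null conditions} is used through $\Au_{\ih}^{\jh000\kh}=0$ and $B_{\ih}^{j\alpha\beta\kh}=0$), is what is deferred to Appendix B.
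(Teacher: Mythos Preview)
Your proposal is correct and follows essentially the same approach as the paper's proof: Leibniz rule plus commutators to reduce to bilinear model terms, then the threefold split (wave--wave, wave--KG, KG--KG), the vanishing assumption \eqref{quasilinear decay on coefficients} to dispose of undifferentiated $u$-factors, and for the wave--wave derivative terms the passage to the one-one frame combined with the interior/exterior split $\{r\le t/2\}\cup\{r\ge t/2\}$ so that the weak null condition forces a tangential factor in the exterior. Two small remarks: the boundedness of $Z^I$ applied to the transition coefficients $\omega^i$ on $\{r\ge t/2\}$ is Lemma~\ref{basic lem frame}, not Lemma~\ref{basic lem commutator H}; and in \eqref{quasilinear lem esti-source2 u} the change of frame for the \emph{second}-order factor produces an extra correction term of the type $\Au_{\ih}^{\jh\alpha\beta\gamma\kh}\delu_{\gamma}u_{\kh}\,\delu_{\alpha}(\Phi_{\beta}^{\beta'})\,\del_{\beta'}u_{\jh}$, which the paper isolates as a separate piece (its $H_2(s)$) and estimates using $|\delu_\alpha\Phi_\beta^{\beta'}|\lesssim t^{-1}$ on $\{r\ge t/2\}$ --- you should make this step explicit rather than absorbing it into ``bookkeeping''.
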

%*****************************************************************************************************************************************************************
The last lemma is to guarantee \eqref{quasilinear proof source}
\begin{lemma}\label{quasilinear lem source-commutator}
Suppose \eqref{quasilinear decay on coefficients}, \eqref{quasilinear null conditions}, \eqref{quasilinear proof energy assumption} 
and \eqref{quasilinear proof used decay} hold,
then the following estimates hold for any $|I^*|\leq 7$:
\begin{equation}\label{quasilinear lem esti-source}
\bigg(\int_{H_s}\big|Z^{I^*} F_i(w,w')\big|^2dx\bigg)^{1/2} 
\leq C(n)(C_1\varepsilon)^2 K s^{-1} + C(n)C_1\varepsilon K s^{-1}\sum_{j\atop 6\leq|I|\leq |I^*|}E_m(s,Z^I w_j),
\end{equation}
\begin{equation}\label{quasilinear lem esti-commutator}
\bigg(\int_{H_s}\big|[Z^{I^*},G_i^{j\alpha\beta}\del_{\alpha\beta}]w_j \big|^2dx\bigg)^{1/2} 
\leq C(n)(C_1\varepsilon)^2 K s^{-1} + C(n)C_1\varepsilon K s^{-1}\sum_{j\atop 6\leq|I|\leq |I^*|} E_m(s,Z^I w_j),
\end{equation}
When $|I^*|\leq 5$ the last terms disappear.
\end{lemma}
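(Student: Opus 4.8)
The plan is to expand $Z^{I^*}$ by the Leibniz rule, so that both $Z^{I^*}F_i$ and $[Z^{I^*},G_i^{j\alpha\beta}\del_{\alpha\beta}]w_j$ become finite sums of products of two factors carrying together at most $|I^*|\le 7$ vector fields. Since $\lfloor 7/2\rfloor=3$, one factor always carries at most $3$ of them; that factor I would estimate pointwise on $H_s$ by the decay estimates \eqref{quasilinear proof used decay}, while the companion factor (at most $7$ vector fields) I would put into $L^2(H_s)$ by the energy bounds \eqref{quasilinear proof energy assumption}. As a preliminary step one commutes every $Z$ past the $\del$'s with Lemma~\ref{basic lem commutator}, reducing everything to products of the elementary blocks $\del_\alpha Z^{I_1}w_j$, $Z^{I_1}w_j$, $\del_{\alpha\beta}Z^{I_2}w_k$ plus strictly lower order remainders.

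First I would treat $Z^{I^*}F_i$. Since the energy controls $(s/t)\del_\alpha Z^Iw$ and $Z^I v_{\kc}$ in $L^2$, not $\del_\alpha Z^Iw$ itself, I split $H_s=\{r\le t/2\}\cup\{r\ge t/2\}$. On $\{r\le t/2\}$ one has $t\simeq s$, the weight $(s/t)$ is harmless, and the pointwise decay of the low order factor already gives a power better than $s^{-1}$. On $\{r\ge t/2\}$ the factor $t/s$ must be paid for, and this is exactly where the structural hypotheses enter: the condition \eqref{quasilinear decay on coefficients}, $B_i^{j\alpha\beta\kh}=Q_i^{\alpha j\kh}=R_i^{\jh\kc}=R_i^{\jh\kh}=0$, removes precisely those quadratic terms in which two slowly decaying wave components would meet with too few derivatives, so that each surviving quadratic term carries either a Klein--Gordon factor (whose pointwise decay in \eqref{quasilinear proof used decay} is a power of $t$ better, enough to absorb $t/s$) or an extra derivative. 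The remaining pure wave$\times$wave contributions in the wave equations are handled with the one-one frame: writing $\del_\alpha w=\Psi_\alpha^\beta\delu_\beta w$ and using Lemma~\ref{basic lem frame}, the weak null conditions \eqref{quasilinear null conditions}, $\Au_{\ih}^{\jh000\kh}=\Bu_{\ih}^{\jh00\kh}=\Pu_{\ih}^{00\jh\kh}=0$, annihilate the all-$\del_t$ component, so each remaining term carries a good derivative $\ndel_i=t^{-1}H_i$ modulo an $(s/t)^2$-weighted remainder and decays faster than $s^{-1}$. Summing, one gets $C(n)(C_1\varepsilon)^2Ks^{-1}$, plus — only when the top order $|I|\in\{6,7\}$ is the index sitting on the $L^2$ factor — a single term $C(n)C_1\varepsilon Ks^{-1}E_m(s,Z^Iw_j)^{1/2}$, the $N_i$ contribution; for $|I^*|\le 5$ every index on the $L^2$ factor has length $\le 5$, its energy is $\le(C_1\varepsilon)^2$ by \eqref{quasilinear proof energy assumption}, and that term is absorbed into the first, so it disappears.

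The commutator estimate follows the same template, with one extra wrinkle: the expansion of $[Z^{I^*},G_i^{j\alpha\beta}\del_{\alpha\beta}]$ yields terms $Z^{I_1}G_i^{j\alpha\beta}\cdot\del_{\alpha\beta}Z^{I_2}w_j$ with $|I_1|\ge 1$, so $\del_{\alpha\beta}Z^{I_2}w_j$ may reach derivative order $8$, one above what \eqref{quasilinear proof energy assumption} controls. Here I would reduce the order using the null condition on $\Au$ together with $B_i^{j\alpha\beta\kh}=0$: passing to the one-one frame and dropping the annihilated all-$\del_t$ component, $\del_\gamma w_k\,\del_{\alpha\beta}w_j$ acquires a good-derivative factor, so the worst surviving object is $t^{-1}H\del Z^{I_2}w_j$ (order $\le 7$, with the gained $t^{-1}$) or $(s/t)^2$-weighted second derivatives with one $\del$ absorbed into a weight; where no good derivative is available, the top-order $\del_t\del_t Z^{I_2}w_j$ is traded, via the equation \eqref{quasilinear eq wave-KG nonlinear} differentiated by $Z^{I_2}$, for the (already controlled) nonlinearity and for $t^{-1}HH$-type terms, the small coefficient $G$ being moved to the left at top order; finally $Bw_k\del_{\alpha\beta}w_j$ survives only with $w_k=w_{\kc}$, again supplying the extra decay.

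I expect the main obstacle to be exactly this top-order bookkeeping: checking, interaction type by interaction type and region by region, that after using \eqref{quasilinear decay on coefficients}, \eqref{quasilinear null conditions}, Lemmas~\ref{basic lem commutator}--\ref{basic lem frame} and the one-one frame identities, every $t/s$ loss caused by the unweighted second derivatives of $w$ in the quasilinear coupling is recovered, and that the slow $s^\delta$ (or $\log s$) growth remains confined to the term linear in the top-order energy. This is the long and mechanical computation deferred to Appendix~B.
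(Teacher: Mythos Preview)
Your overall template — Leibniz expansion, low-order factor in $L^\infty$ via \eqref{quasilinear proof used decay}, high-order factor in $L^2$ via \eqref{quasilinear proof energy assumption}, with the $E_m$-term surviving only when the $L^2$ factor carries $\ge 6$ vector fields — is exactly the paper's method. Two points, however, are overcomplicated and rest on a miscount.

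First, the null conditions \eqref{quasilinear null conditions} and the near/far splitting are not needed in this lemma. The target here is only $s^{-1}$, and for the pure wave--wave term $P_i^{\alpha\beta\jh\kh}\del_\alpha u_{\jh}\del_\beta u_{\kh}$ the crude substitution already gives it: with $|Z^{I_1}\del_\alpha u_{\jh}|\le C(n)C_1\varepsilon\,t^{-1/2}s^{-1}$ one has $t^{-1/2}s^{-1}\cdot(t/s)=t^{1/2}s^{-2}\le s^{-1}$ on all of $H_s\cap\Lambda'$ (since $t\le s^2$), so no $t/s$ loss has to be ``paid for'' near the cone. The one-one frame and the weak null conditions are what buy the sharper $s^{-3/2+2\delta}$ in Lemma~\ref{quasilinear lem source u}; they are superfluous here, and the paper's proof of \eqref{quasilinear lem esti-source} indeed never invokes them.

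Second, and more important, there is no top-order loss in the commutator. In the decomposition
\[
[Z^{I^*},G_i^{j\alpha\beta}\del_{\alpha\beta}]w_j=\sum_{|I^*_1|\ge1}Z^{I^*_1}G_i^{j\alpha\beta}\,Z^{I^*_2}\del_{\alpha\beta}w_j+G_i^{j\alpha\beta}[Z^{I^*},\del_{\alpha\beta}]w_j,
\]
the factor $Z^{I^*_2}\del_{\alpha\beta}w_j$ has $|I^*_2|\le 6$, but since $\del_\alpha$ is itself one of the $Z$'s, $Z^{I^*_2}\del_{\alpha}\del_{\beta}w_j=Z^{I'}\del_\beta w_j$ with $|I'|=|I^*_2|+1\le 7$, which is precisely the quantity controlled by \eqref{quasilinear proof energy assumption}. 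Likewise $[Z^{I^*},\del_{\alpha\beta}]w_j$ is, by \eqref{commutator H-partial}, a combination of $\del_{\gamma}Z^{J}\del_\delta w_j$ with $|J|\le|I^*|-1$, again within reach of the order-$7$ energy. So the device of substituting the equation to eliminate $\del_t\del_t Z^{I_2}w_j$ is unnecessary; the paper closes \eqref{quasilinear lem esti-commutator} by the same direct $L^\infty\times L^2$ bookkeeping as for \eqref{quasilinear lem esti-source}, using only \eqref{quasilinear lem G-estimate} for the $Z^{I_1}G$ factor.
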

%*******************************************************************************************************************************************************************
The following lemma is to guarantee \eqref{quasilinear proof energy curveterm}:
\begin{lemma}
\label{quasilinear lem energy curveterm is small}
Suppose \eqref{quasilinear decay on coefficients},\eqref{quasilinear proof energy assumption} and \eqref{quasilinear proof used decay} hold,
then for any $|I^*|\leq 7$ the following estimates is true:
\begin{equation}\label{quasilinear lem esti M}
\int_{H_s}\frac{s}{t}\bigg(\big(\del_{\alpha}G_i^{j\alpha\beta}\big)\del_t Z^{I^*} w_i \del_{\beta}Z^{I^*} w_j 
- \frac{1}{2}\big(\del_t G_i^{j\alpha\beta}\big)\del_{\alpha}Z^{I^*} w_i \del_{\beta}Z^{I^*} w_j\bigg)dx
\leq M_i(s) \sum_kE_m(s,Z^I w_k),
\end{equation}
where
$$
M_i(s) = C(n)C_1\varepsilon K s^{-1}.
$$
\end{lemma}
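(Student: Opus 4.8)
The plan is to reduce \eqref{quasilinear lem esti M} to a pointwise bound on the coefficient $\del_\mu G_i^{j\alpha\beta}$ along $H_s$, and then to extract that bound from the decay assumption \eqref{quasilinear proof used decay} and the structural condition \eqref{quasilinear decay on coefficients}. First I would substitute the explicit form of $G$: differentiating $G_i^{j\alpha\beta}=A_i^{j\alpha\beta\gamma k}\del_\gamma w_k+B_i^{j\alpha\beta k}w_k+O\big((|w|+|w'|)^2\big)$ shows that $\del_\mu G_i^{j\alpha\beta}$ is a linear combination, with coefficients of size $\le CK$, of second derivatives $\del_\mu\del_\gamma w_k$ of all the unknowns, of first derivatives $\del_\mu w_k$ which by \eqref{quasilinear decay on coefficients} involve only the Klein--Gordon components $v_{\kc}$, and of cubic remainders $O\big((|w|+|w'|)(|\del w|+|\del^2 w|)\big)$, which are harmless. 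So it suffices to control $\del^2 w$ and $\del v$.

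Next, both factors $\del_t Z^{I^*}w_i$ and $\del_\beta Z^{I^*}w_j$ in \eqref{quasilinear lem esti M} are of top order $|I^*|$, so I would write each as $(t/s)$ times a quantity of the form $(s/t)\del_\bullet Z^{I^*}w_\bullet$; by the second and third expressions in \eqref{basic energy expressions} one has $\int_{H_s}\big|(s/t)\del_\alpha Z^{I^*}w_k\big|^2\,dx\le E_m(s,Z^{I^*}w_k)$. Combining the single weight $s/t$ already present in the integrand with the two factors $t/s$ produced this way, a Cauchy--Schwarz estimate on $H_s$ together with the arithmetic--geometric inequality reduces everything to the pointwise estimate
\[
\sup_{H_s\cap\Lambda'}\ \frac{t}{s}\,\Big(|\del_\mu\del_\gamma w_k|+|\del_\mu v_{\kc}|\Big)\ \le\ C(n)\,C_1\varepsilon\,s^{-1},
\]
after which \eqref{quasilinear lem esti M} holds with $M_i(s)=C(n)C_1\varepsilon K s^{-1}$.

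I would then split $H_s\cap\Lambda'\cap G_{B+1}^{\infty}$ into the interior $\{r\le t/2\}$ and the exterior $\{r\ge t/2\}$. On the interior $t\le\sqrt2\,s$, so $t/s$ is bounded; since \eqref{quasilinear proof used decay} supplies the rate $t^{-1/2}s^{-1}$ on $\del^2 w$ and on $\del v$ (with an extra $s^\delta$ only for the Klein--Gordon components) and $0<\delta\le 1/6$, every term there is $O(s^{-3/2+\delta})$, which beats $s^{-1}$. On the exterior one uses $t\le s^2$, valid on $H_s\cap\Lambda'$: this disposes at once of every second derivative falling on a wave component, because $|\del_\mu\del_\gamma u_{\jh}|\le C_1\varepsilon s^{-1}t^{-1/2}$ gives $(t/s)|\del_\mu\del_\gamma u_{\jh}|\le C_1\varepsilon s^{-2}t^{1/2}\le C_1\varepsilon s^{-1}$.

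The remaining terms — the Klein--Gordon contributions $\del_\mu v_{\kc}$ and $\del_\mu\del_\gamma v_{\kc}$ near the light cone — are the only delicate part, and I expect this to be the main obstacle. Here I would exploit that the mass term $2(D_{\kc}v_{\kc})^2$ sits inside $E_m$, so that Lemma \ref{basic lm dispersive} gives $v_{\kc}$ and $(s/t)\del_\alpha v_{\kc}$ the stronger decay $C_1\varepsilon s^\delta t^{-3/2}$ in place of the $t^{-1/2}$ of a wave field, and decompose $\del_\alpha$ and $\del_\gamma$ into a tangential part (a combination of the $\delb_i$) plus a coefficient bounded by $C\,t/s$ times the hyperboloidal time derivative $\del_s$: the tangential pieces inherit this good decay, while for the surviving $\del_s^2 v_{\kc}$ I would invoke the Klein--Gordon equation \eqref{quasilinear eq wave-KG nonlinear}, which trades it against $D_{\kc}^2 v_{\kc}$, tangential second derivatives, a lower-order time derivative, the quasilinear term $G_{\kc}^{l\mu\nu}\del_{\mu\nu}w_l$ and $F_{\kc}$ — the first three carrying the required gain and the last two being quadratically small. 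Carefully tracking the weights $t/s\le s$ through this — exactly the bookkeeping deferred to Appendix B — yields the pointwise bound above, and summing over the finitely many indices $i,j,k,\alpha,\beta,\gamma,\mu$ completes the proof.
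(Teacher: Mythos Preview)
Your reduction is exactly the paper's: absorb the two top-order factors into the energy via $(s/t)\del_\bullet Z^{I^*}w_\bullet$, and reduce everything to the pointwise bound $(t/s)\,|\del_\mu G_i^{j\alpha\beta}|\le C(n)C_1\varepsilon K\,s^{-1}$ on $H_s\cap\Lambda'$. The paper writes this as $|\del_\mu G|\le C(n)C_1\varepsilon K\big(t^{-1/2}s^{-1}+t^{-3/2}s^{\delta}\big)$, coming respectively from the wave and Klein--Gordon contributions in \eqref{quasilinear lem G-estimate}, and then uses $s\le t\le s^2$ on $H_s\cap\Lambda'$ to conclude --- precisely the exterior argument you give for $\del^2 u_{\jh}$.

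Where you diverge from the paper is in the treatment of the Klein--Gordon pieces $\del_\mu v_{\kc}$ and $\del_\mu\del_\gamma v_{\kc}$ near the cone. Your proposed detour through a tangential/normal splitting and an appeal to the equation for $\del_s^2 v_{\kc}$ would work, but it is unnecessary: the second line of \eqref{quasilinear proof used decay} already gives $\big|t^{3/2}\,Z^{J^*}v_{\kc}\big|\le C(n)C_1\varepsilon\,s^{\delta}$ for every $|J^*|\le 5$, and since $\del_\mu v_{\kc}$ and $\del_\mu\del_\gamma v_{\kc}$ are themselves of the form $Z^{J}v_{\kc}$ with $|J|\le 2$, the bound $t^{-3/2}s^{\delta}$ applies to them directly. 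Multiplying by $t/s$ gives $t^{-1/2}s^{-1+\delta}\le s^{-3/2+\delta}$, which is even better than the $s^{-1}$ you need. In short, the mass term in $E_m$ already propagates the strong $t^{-3/2}$ decay to \emph{all} $Z$-derivatives of $v_{\kc}$ up to order $5$, so no equation-based substitution is required.
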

%\newpage
%*********************************************************************************************************************************************************************
\begin{proof}[Proof of lemma \ref{quasilinear lem source u}]
One will firstly prove \eqref{quasilinear lem esti-source1 u}. By \eqref{quasilinear decay on coefficients}, for any $|I|\leq 5$,
$$
F_{\ih} = P_{\ih}^{\alpha\beta jk}\del_{\alpha}w_j\del_{\beta}w_k + Q_{\ih}^{\alpha j\kc}\del_{\alpha}w_j v_{\kc} + R_{\ih}^{\jc\kc}v_{\jc}v_{\kc}.
$$
For the first term:
$$
\aligned
P_{\ih}^{\alpha\beta jk}\del_{\alpha}w_j\del_{\beta}w_k 
&= P_{\ih}^{\alpha\beta \jh\kh}\del_{\alpha}u_{\jh}\del_{\beta}u_{\kh} +P_{\ih}^{\alpha\beta \jh\kc}\del_{\alpha}u_{\jh}\del_{\beta}v_{\kc}
\\
&\quad P_{\ih}^{\alpha\beta \jc\kh}\del_{\alpha}v_{\jc}\del_{\beta}u_{\kh} +P_{\ih}^{\alpha\beta \jc\kc}\del_{\alpha}v_{\jc}\del_{\beta}v_{\kc}
\\
&=: Y_1 + Y_2 + Y_3 + Y_4
\endaligned
$$
Now consider $Y_1$. 
$$
\aligned
\bigg(\int_{H_s}\big|Z^I Y_1\big|^2dx\bigg)^{1/2} 
&\leq \bigg(\int_{H_s\cap\{r\leq t/2\}}\big|Z^I Y_1\big|^2dx\bigg)^{1/2} + \bigg(\int_{H_s\cap\{r\geq t/2\}}\big|Z^I Y_1\big|^2dx\bigg)^{1/2}
\\
&=: S_1 + S_2.
\endaligned
$$
$$
\aligned
S_1
&\leq \bigg(\int_{H_s\cap\{r\leq t/2\}}\big|Z^I \big(P_{\ih}^{\alpha\beta \jh\kh}\del_{\alpha}u_{\jh}\del_{\beta}u_{\kh}\big)\big|^2dx\bigg)^{1/2}
\\
&\leq \sum_{I_1 + I_2 =I}\bigg(\int_{H_s\cap\{r\leq t/2\}}\big|P_{\ih}^{\alpha\beta \jh\kh}Z^{I_1}\del_{\alpha}u_{\jh}Z^{I_2}\del_{\beta}u_{\kh}\big|^2dx\bigg)^{1/2}
\\
&\leq \sum_{|I_1|\leq 2\atop I_1 + I_2 =I}\bigg(\int_{H_s\cap\{r\leq t/2\}}\big|P_{\ih}^{\alpha\beta \jh\kh}Z^{I_1}\del_{\alpha}u_{\jh}Z^{I_2}\del_{\beta}u_{\kh}\big|^2dx\bigg)^{1/2}
\\
&\quad +\sum_{|I_2|\leq 2\atop I_1 + I_2 =I}\bigg(\int_{H_s\cap\{r\leq t/2\}}\big|P_{\ih}^{\alpha\beta \jh\kh}Z^{I_1}\del_{\alpha}u_{\jh}Z^{I_2}\del_{\beta}u_{\kh}\big|^2dx\bigg)^{1/2}
\\
&\leq \sum_{|I_1|\leq 2\atop I_1 + I_2 =I}\bigg(\int_{H_s\cap\{r\leq t/2\}}\big|KC(n)C_1\varepsilon t^{-1/2}s^{-1}(t/s))\big|^2\cdot\big|(s/t)Z^{I_2}\del_{\beta}u_{\kh}\big|^2dx\bigg)^{1/2}
\\
&\quad +\sum_{|I_2|\leq 2\atop I_1 + I_2 =I}\bigg(\int_{H_s\cap\{r\leq t/2\}}\big|(s/t)Z^{I_1}\del_{\alpha}u_{\jh}\big|^2\cdot\big|KC(n)C_1\varepsilon t^{-1/2}s^{-1}(t/s)\big|^2dx\bigg)^{1/2}
\\
&\leq C(n)(C_1\varepsilon)^2Ks^{-3/2}.
\endaligned
$$
To estimate $S_2$, one uses the ``one-one" frame and the weak null conditions \eqref{quasilinear null conditions}.
\begin{equation}\label{quasilinear lem example of null1}
\aligned
S_2
& = \bigg(\int_{H_s\cap\{r\geq t/2\}}\big|Z^I\big(\Pu_{\ih}^{\alpha\beta \jh\kh}\delu_{\alpha}u_{\jh}\delu_{\beta}u_{\kh}\big)\big|^2dx\bigg)^{1/2}
\\
&\leq \bigg(\int_{H_s\cap\{r\geq t/2\}}\big|Z^I\big(\Pu_{\ih}^{a\beta \jh\kh}\delu_au_{\jh}\delu_{\beta}u_{\kh}\big)\big|^2dx\bigg)^{1/2}
\\
&\quad + \bigg(\int_{H_s\cap\{r\geq t/2\}}\big|Z^I\big(\Pu_{\ih}^{\alpha b\jh\kh}\delu_{\alpha}u_{\jh}\delu_b u_{\kh}\big)\big|^2dx\bigg)^{1/2}
\\
& =: S_2^{(1)} + S_2^{(2)}.
\endaligned
\end{equation}
By lemma \ref{basic lem frame},
\begin{equation}\label{quasilinear lem example of null2}
\aligned
S_2^{(1)}
& \leq \sum_{I_1+I_2+I_3=I}\bigg(\int_{H_s\cap\{r\geq t/2\}}\big|Z^{I_3}\Pu_{\ih}^{a\beta\jh\kh}Z^{I_1}\delu_au_{\ih}Z^{I_2}\delu_{\beta}u_{\kh}\big|^2dx\bigg)^{1/2}
\\
& \leq \sum_{|I_1\leq 2\atop I_1+I_2+I_3=I}\bigg(\int_{H_s\cap\{r\geq t/2\}}\big|Z^{I_3}\Pu_{\ih}^{a\beta\jh\kh}Z^{I_1}\delu_au_{\ih}Z^{I_2}\delu_{\beta}u_{\kh}\big|^2dx\bigg)^{1/2}
\\
&\quad+ \sum_{|I_2|\leq 2\atop I_1+I_2+I_3=I}\bigg(\int_{H_s\cap\{r\geq t/2\}}\big|Z^{I_3}\Pu_{\ih}^{a\beta\jh\kh}Z^{I_1}\delu_au_{\ih}Z^{I_2}\delu_{\beta}u_{\kh}\big|^2dx\bigg)^{1/2}
\\
&\leq C(n)(C_1\varepsilon)^2 K s^{-3/2}.
\endaligned
\end{equation}
$S_2^{(2)}$ is estimated in the same way. Then
$$
\bigg(\int_{H_s}\big|Z^IY_1\big|^2\bigg)^{1/2} \leq C(n)(C_1\varepsilon)^2s^{-3/2}.
$$

Now consider the term of $Y_2,$ and $Y_3$. One notices that by \eqref{quasilinear proof energy assumption},
for any $|I|\leq 6$ and $|J|\leq 5$,
\begin{equation}\label{quasilinear energy better v}
\aligned
&\bigg(\int_{H_s}\big|Z^I\del_{\alpha}v_{\kc}\big|^2dx\bigg)^{1/2}\leq C_1\varepsilon,
\\
&\bigg(\int_{H_s}\big|Z^J\del_{\alpha\beta}v_{\kc}\big|^2dx\bigg)^{1/2}\leq C_1\varepsilon.
\endaligned
\end{equation}
Taking this into account, one simply substitutes \eqref{quasilinear proof energy assumption} and \eqref{quasilinear proof used decay} into
the expression of $Y_2$ and $Y_3$ (null conditions are not imposed here) and gets
$$
\bigg(\int_{H_s}\big|Z^IY_2\big|^2dx\bigg)^{1/2} + \bigg(\int_{H_s}\big|Z^IY_2\big|^2dx\bigg)^{1/2} \leq C(n)(C_1\varepsilon)^{1/2}s^{-3/2+\delta}
$$
The estimate on $Y_4$ is even simpler than that of $Y_2$ and $Y_3$. One simply substitutes \eqref{quasilinear proof energy assumption} and
\eqref{quasilinear proof used decay} into the expression, and gets
$$
\bigg(\int_{H_s}\big|Z^IY_4\big|^2dx\bigg)^{1/2} \leq C(n)(C_1\varepsilon)^2s^{-3/2+2\delta}.
$$

The estimate of the integrals 
$$
\int_{H_s}\big|Z^I\big(Q_{\ih}^{\alpha j\kc}\del_{\alpha}w_j v_{\kc}\big)\big|^2dx
$$
and
$$
\int_{H_s}\big|Z^I\big(R_{\ih}^{\jc\kc}v_{\jc}v_{\kc}\big)\big|^2dx,
$$
are just substitutions of \eqref{quasilinear proof energy assumption} and \eqref{quasilinear proof used decay}. One gets
$$
\int_{H_s}\big|Z^I\big(Q_{\ih}^{\alpha j\kc}\del_{\alpha}w_j v_{\kc}\big)\big|^2dx
 \leq C(n)(C_1\varepsilon)^2 K s^{-3/2 + 2\delta},
$$
and 
$$
\int_{H_s}\big|Z^I\big(R_{\ih}^{\jc\kc}v_{\jc}v_{\kc}\big)\big|^2dx
 \leq C(n)(C_1\varepsilon)^2 K s^{-3/2 + 2\delta}.
$$
So one gets for any $|I|\leq 8$,
$$
\bigg(\int_{H_s}\big|Z^I F_{\ih}\big|^2\bigg)^{1/2} \leq C(n)(C_1\varepsilon)^2 s^{-3/2 + 2\delta}.
$$
The proof of \eqref{quasilinear lem esti-source2 u} is quite similar. 
$$
Z^IG_{\ih}^{j\alpha\beta}\del_{\alpha\beta}w_j = Z^IG_{\ih}^{\jh\alpha\beta}\del_{\alpha\beta}u_{\jh} + Z^IG_{\ih}^{\jc\alpha\beta}\del_{\alpha\beta}v_{\jc}.
$$
The first term is decomposed into three pieces:
$$
\aligned
Z^IG_{\ih}^{\jh\alpha\beta}\del_{\alpha\beta}u_{\jh}
&   = Z^I\big(A_{\ih}^{\jh\alpha\beta\gamma\kh}\del_{\gamma}u_{\kh}\del_{\alpha\beta}u_{\jh}\big)
    + Z^I\big(A_{\ih}^{\jh\alpha\beta\gamma\kc}\del_{\gamma}v_{\kc}\del_{\alpha\beta}u_{\jh}\big)
    + Z^I\big(B_{\ih}^{\jh\alpha\beta\kc}v_{\kc}\del_{\alpha\beta}u_{\jh}\big)
\\
&=: M_1(s) + M_2(s) + M_3(s).
\endaligned
$$
The estimate on $M_2$ is simple.
$$
\aligned
M_2(s) 
&\leq \bigg(\int_{H_s}\big|Z^I\big(A_{\ih}^{\jh\alpha\beta\gamma\kc}\del_{\gamma}v_{\kc}\del_{\alpha\beta}u_{\jh}\big)\big|^2 dx\bigg)^{1/2}
\\
&\leq \sum_{I_1+I_2=I}\bigg(\int_{H_s}K^2\big|Z^{I_1}\del_{\gamma}v_{\kc}\big|^2\cdot\big|Z^{I_2}\del_{\alpha\beta}u_{\jh}\big|^2dx\bigg)^{1/2}
\\
&\leq \sum_{|I_1|\leq 2 \atop I_1+I_2=I}\bigg(\int_{H_s}K^2\big|Z^{I_1}\del_{\gamma}v_{\kc}\big|^2\cdot\big|Z^{I_2}\del_{\alpha\beta}u_{\jh}\big|^2dx\bigg)^{1/2}
\\
&\quad+\sum_{|I_2|\leq 2 \atop I_1+I_2=I}\bigg(\int_{H_s}K^2\big|Z^{I_1}\del_{\gamma}v_{\kc}\big|^2\cdot\big|Z^{I_2}\del_{\alpha\beta}u_{\jh}\big|^2dx\bigg)^{1/2}
\\
& \leq KC(n)C_1\varepsilon \sum_{|I_2|\leq 5}\bigg(\int_{H_s}\big(t^{-3/2}s^{\delta}(t/s)\big)^2\cdot\big|(s/t)Z^{I_2}\del_{\alpha\beta}u_{\jh}\big|^2dx\bigg)^{1/2}
\\
&\quad + KC(n)C_1\varepsilon \sum_{|I_1|\leq 5}\bigg(\int_{H_s}\big|Z^{I_1}\del_{\gamma}v_{\kc}\big|^2\big(t^{-1/2}s^{-1}\big)^2dx\bigg)^{1/2}
\\
& \leq KC(n)(C_1\varepsilon)^2 s^{-3/2+2\delta}.
\endaligned
$$
Similarly 
$$
M_3 \leq C(n)(C_1\varepsilon)^2K s^{-3/2 +2\delta}.
$$
The estimate of $M_1(s)$ is the more difficult than the others.
$$
\aligned
M_1(s) 
&\leq \bigg(\int_{H_s\cap\{r\leq t/2\}}\big|Z^I\big(A_{\ih}^{\jh\alpha\beta\gamma\kh}\del_{\gamma}u_{\kh}\del_{\alpha\beta}u_{\jh}\big)\big|^2\bigg)^{1/2}
\\
&\quad + \bigg(\int_{H_s\cap\{r\leq t/2\}}\big|Z^I\big(A_{\ih}^{\jh\alpha\beta\gamma\kh}\del_{\gamma}u_{\kh}\del_{\alpha\beta}u_{\jh}\big)\big|^2\bigg)^{1/2}
\\
& =: S_1 + S_2
\endaligned
$$
$$
\aligned
S_1 
&\leq 
\sum_{|I_1|\leq 2\atop I_1+I_2=I}\bigg(\int_{H_s\cap\{r\leq t/2\}}K^2\big|Z^{I_1}\del_{\gamma}u_{\kh}\big|^2\cdot\big|Z^{I_2}\del_{\alpha\beta}u_{\jh}\big)\big|^2\bigg)^{1/2}
\\
&\quad+\sum_{|I_2|\leq 2\atop I_1+I_2=I}\bigg(\int_{H_s\cap\{r\leq t/2\}}K^2\big|Z^{I_1}\del_{\gamma}u_{\kh}\big|^2\cdot\big|Z^{I_2}\del_{\alpha\beta}u_{\jh}\big|^2\bigg)^{1/2}
\\
&\leq C(n)(C_1\varepsilon)^2 s^{-3/2 + \delta}.
\endaligned
$$
$$
\aligned
S_2
&\leq \bigg(\int_{H_s\cap\{r\geq t/2\}}\big|Z^I\big(\Au_{\ih}^{\jh\alpha\beta\gamma\kh}\delu_{\gamma}u_{\kh}\delu_{\alpha\beta}u_{\jh}\big)\big|^2\bigg)^{1/2}
\\
&\quad 
-\bigg(\int_{H_s\cap\{r\geq t/2\}}\big|Z^I\big(\Au_{\ih}^{\jh\alpha\beta\gamma\kh}\delu_{\gamma}u_{\kh}\delu_{\alpha}\big(\Phi_{\beta}^{\beta'}\big)\del_{\beta'}u_{\jh}\big)\big|^2\bigg)^{1/2}
\\
& := H_1(s) + H_2(s)
\endaligned
$$
The estimate on $H_2(s)$ is simple. One notice that $\delu_{\alpha}\Phi_{\beta}^{\beta'}\leq Ct^{-1}$ when $r\geq t/2$. Then one gets
$$
H_2(S) \leq C(n)(C_1\varepsilon)^2K s^{-5/2}.
$$
The estimate of $H_1(s)$ will consult the weak null conditions \eqref{quasilinear null conditions}. Just as one as shown in \eqref{quasilinear lem example of null1}
and \eqref{quasilinear lem example of null2}, 
$$
H_1(s) \leq C(n)(C_1\varepsilon)^2K s^{-3/2+\delta}.
$$
To estimate the term 
$$
\int_{H_s}\big|Z^I\big(G_{\ih}^{\jc\alpha\beta}\del_{\alpha\beta}v_{\jc}\big)\big|^2dx,
$$
One notices that %\marginpar{from here $m\leq 9$,$m-2$ and so on.} 
\begin{equation}\label{quasilinear lem G-estimate}
\big|Z^IG_{\ih}^{\jc\alpha\beta}\big|
\leq C(n)K\sum_{\alpha\atop|I'|\leq |I|}\bigg(\sum_{\kh}Z^{I'}\del_{\alpha}u_{\kh} + \sum_{\lc} Z^{I'}\del_{\alpha}v_{\lc} + \sum_{\lc}Z^{I'}v_{\lc}\bigg).
\end{equation}
So one gets
$$
\aligned
&\quad\bigg(\int_{H_s}\big|Z^I\big(G_{\ih}^{\jc\alpha\beta}\del_{\alpha\beta}v_{\jc}\big)\big|^2dx\bigg)^{1/2}
\\
&\leq \sum_{|I_1|\leq 2}\bigg(\int_{H_s}\big|Z^{I_1}G_{\ih}^{\jc\alpha\beta}Z^{I_2}\del_{\alpha\beta}v_{\jc}\big|^2dx\bigg)^{1/2}
\\
&\quad + \sum_{|I_2|\leq 2}\bigg(\int_{H_s}\big|Z^{I_1}G_{\ih}^{\jc\alpha\beta}Z^{I_2}\del_{\alpha\beta}v_{\jc}\big|^2dx\bigg)^{1/2}
\\
&\leq C(n)C_1\varepsilon K \bigg(\int_{H_s}\big(t^{-1/2}s^{-1} + t^{-3/2}s^{\delta}\big)^2\cdot\big|Z^{I_2}\del_{\alpha\beta}v_{\jc}\big|^2dx\bigg)^{1/2}
\\
&\quad + C(n)C_1\varepsilon K \bigg(\int_{H_s}\big|Z^{I_1}G_{\ih}^{\jc\alpha\beta}\big|^2\cdot\big(t^{-3/2}s^{\delta}\big)^2dx\bigg)^{1/2}
\\
&\leq C(n)(C_1\varepsilon)^2 Ks^{-3/2+2\delta}.
\endaligned
$$
\end{proof}
%\newpage
%*******************************************************************************************************************************************************************
\begin{proof}[Proof of lemma \ref{quasilinear lem source-commutator}]
One will firstly prove \eqref{quasilinear lem esti-source}. 
$$
F_i = P_i^{\alpha\beta jk}\del_{\alpha\beta}w_j \del_{\beta}w_k + Q_i^{\alpha j\kc}\del_{\alpha}w_j v_{\kc} + R_i^{\jc\kc}v_{\jc}v_{\kc}.
$$
And
$$
\aligned
\quad P_i^{\alpha\beta jk}\del_{\alpha\beta}w_j \del_{\beta}w_k
&=P_i^{\alpha\beta \jh\kh}\del_{\alpha\beta}u_{\jh} \del_{\beta}u_{\kh} + P_i^{\alpha\beta \jh\kc}\del_{\alpha\beta}u_{\jh} \del_{\beta}v_{\kc}
\\
&\quad + P_i^{\alpha\beta \jc\kh}\del_{\alpha\beta}v_{\jc} \del_{\beta}u_{\kh} + P_i^{\alpha\beta \jc\kc}\del_{\alpha\beta}v_{\jc} \del_{\beta}v_{\kc}.
\endaligned
$$ 
Then the following estimates hold. Here $|I^*|\leq 8$. For the first term:
$$
\aligned
&\quad\bigg(\int_{H_s}\big|Z^{I^*}\big(P_i^{\alpha\beta\jh\kh}\del_{\alpha}u_{\jh}\del_{\beta}u_{\kh}\big)\big|^2dx\bigg)^{1/2}
\\
&\leq K\sum_{I^*_1+I^*_2=I^*\atop |I^*_1|\leq 3\ |I^*_2|\leq 5}
\bigg(\int_{H_s}\big|Z^{I_1^*}\del_{\alpha}u_{\jh}\big|^2\cdot\big|Z^{I^*_2}\del_{\beta}u_{\kh}\big|^2dx\bigg)^{1/2}
\\
&\quad+K\sum_{I^*_1+I^*_2=I^*\atop |I^*_2|\leq 3 |I^*_1|\leq 5}
\bigg(\int_{H_s}\big|Z^{I_1^*}\del_{\alpha}u_{\jh}\big|^2\cdot\big|Z^{I^*_2}\del_{\beta}u_{\kh}\big|^2dx\bigg)^{1/2}
\\
&+\quad K\sum_{I^*_1+I^*_2=I^*\atop |I^*_2|\geq 6}
\bigg(\int_{H_s}\big|Z^{I_1^*}\del_{\alpha}u_{\jh}\big|^2\cdot\big|Z^{I^*_2}\del_{\beta}u_{\kh}\big|^2dx\bigg)^{1/2}
\\
&\quad+K\sum_{I^*_1+I^*_2=I^*\atop |I^*_1|\geq 6}
\bigg(\int_{H_s}\big|Z^{I_1^*}\del_{\alpha}u_{\jh}\big|^2\cdot\big|Z^{I^*_2}\del_{\beta}u_{\kh}\big|^2dx\bigg)^{1/2}
\\
&\leq C(n)(C_1\varepsilon)^2 s^{-1} + C(n)C_1\varepsilon Ks^{-1} \sum_{\ih\atop 6\leq|I'|\leq7}E_m(s,Z^{I'}u_{\ih})^{1/2}.
\endaligned
$$
For the second term:
$$
\aligned
&\quad\bigg(\int_{H_s}\big|Z^{I^*}\big(P_i^{\alpha\beta\jh\kc}\del_{\alpha}u_{\jh}\del_{\beta}v_{\kc}\big)\big|^2dx\bigg)^{1/2}
\\
&\leq K\sum_{I^*_1+I^*_2=I^*\atop|I^*_1|\leq 3 |I^*_2|\leq 6}
\bigg(\int_{H_s}\big|Z^{I^*_1}\del_{\alpha}u_{\jh}\big|^2\cdot\big|Z^{I^*_2}\del_{\beta}v_{\kc}\big|^2dx\bigg)^{1/2}
 +K\bigg(\int_{H_s}\big|\del_{\alpha}u_{\jh}\big|^2\cdot\big|Z^{I^*}\del_{\beta}v_{\kc}\big|^2dx\bigg)^{1/2}
\\
&\quad +K\sum_{|I^*_2|\leq 3\atop I^*_1+I^*_2=I^*}
\bigg(\int_{H_s}\big|Z^{I^*_1}\del_{\alpha}u_{\jh}\big|^2\cdot\big|Z^{I^*_2}\del_{\beta}v_{\kc}\big|^2dx\bigg)^{1/2}
\\
&\leq KC(n)(C_1\varepsilon)^2s^{-3/2 + \delta} + KC(n)C_1\varepsilon s^{-1} \sum_{\kc}E_m(s,Z^{I^*}v_{\kc})^{1/2} 
+ KC(n)(C_1\varepsilon)^2s^{-3/2+2\delta}.
\endaligned
$$
The estimate on the third term is the same.
For the forth term, 
$$
\aligned
&\quad\bigg(\int_{H_s}\big|Z^{I^*}\big(P_i^{\alpha\beta\jc\kc}\del_{\alpha}v_{\jc}\del_{\beta}v_{\kc}\big|^2dx\bigg)^{1/2}
\\
&\leq K\sum_{|I^*_1|\leq 3\atop I^*_1+I^*_2=I^*}
\bigg(\int_{H_s}\big|Z^{I^*_1}\del_{\alpha}v_{\jc}\big|^2\cdot\big|Z^{I^*_2}\del_{\beta}v_{\kc}\big|^2dx\bigg)^{1/2}
     +\sum_{|I^*_2|\leq 3\atop I^*_1+I^*_2=I^*}
\bigg(\int_{H_s}\big|Z^{I^*_1}\del_{\alpha}v_{\jc}\big|^2\cdot\big|Z^{I^*_2}\del_{\beta}v_{\kc}\big|^2dx\bigg)^{1/2}
\\
&\leq KC(n)(C_1\varepsilon)^{-3/2+2\delta}.
\endaligned
$$
The estimates on terms about $Q_i^{\alpha j\kc}$ and $R_i^{\jc\kc}$ are similar. One omits the details.  

Now one will prove \eqref{quasilinear lem esti-commutator}. In general one has he following decomposition:
$$
[G_i^{j\alpha\beta}\del_{\alpha\beta},Z^{I^*}] 
= \sum_{|I^*_1|\geq 1\atop I^*_1 + I^*_2 = I^*}Z^{I^*_1}G_i^{j\alpha\beta}Z^{I^*_2}\del_{\alpha\beta}w_j
+ G_i^{j\alpha\beta}[Z^{I^*},\del_{\alpha\beta}]w_j.
$$
The estimate no the second term is simple. One notices that, by \eqref{commutator H-partial}, $[Z^{I^*},\del_{\alpha\beta}]w_j$ is finite linear combination of
$\del^Jw_j$ with $|J|\leq 7$. So one has
$$
\aligned
&\quad\bigg(\int_{H_s}\big|G_i^{j\alpha\beta}[Z^{I^*},\del_{\alpha\beta}]w_j\big|^2dx\bigg)^{1/2}
\\
&\leq\sum_{|J|\leq 6}\sum_{\ih}\bigg(\int_{H_s}\big|(t/s)G_i^{\jh\alpha\beta}\big|^2\cdot\big|(s/t)Z^Ju_{\ih}\big|^2dx\bigg)^{1/2} 
    +\sum_{|J|\leq 6}\sum_{\jc}\bigg(\int_{H_s}\big|G_i^{\jc\alpha\beta}\big|^2\cdot\big|Z^Jv_{\jc}\big|^2dx\bigg)^{1/2}
\\
&\quad +\sum_{|J|\geq 6}\sum_{\ih}\bigg(\int_{H_s}\big|(t/s)G_i^{\jh\alpha\beta}\big|^2\cdot\big|(s/t)Z^Ju_{\ih}\big|^2dx\bigg)^{1/2}
\\
&\quad +\sum_{|J|  =  7}\sum_{\jc}\bigg(\int_{H_s}\big|(t/s)G_i^{\jc\alpha\beta}\big|^2\cdot\big|(s/t)Z^Jv_{\jc}\big|^2dx\bigg)^{1/2}.
\\
\endaligned
$$
Then by \eqref{quasilinear lem G-estimate}, one gets
$$
\aligned
&\quad\bigg(\int_{H_s}\big|G_i^{j\alpha\beta}[Z^{I^*},\del_{\alpha\beta}]w_j\big|^2dx\bigg)^{1/2}
\\
&\leq C(n)C_1\varepsilon \sum_{|J|\leq 5}\sum_{\ih}
\bigg(\int_{H_s}\big|(t/s)(t^{-1/2}s^{-1} + t^{-3/2}s^{\delta})\big|^2\cdot\big|(s/t)Z^Ju_{\ih}\big|^2dx\bigg)^{1/2}
\\
&\quad+C(n)C_1\varepsilon \sum_{|J|\leq 6}\sum_{\jc}
\bigg(\int_{H_s}\big|t^{-1/2}s^{-1} + t^{-3/2}s^{\delta}\big|^2\cdot\big|Z^Jv_{\jc}\big|^2dx\bigg)^{1/2}
\\
&+\sum_{|J|\geq 6}\sum_{\ih}
\bigg(\int_{H_s}\big|(t/s)(t^{-1/2}s^{-1} + t^{-3/2}s^{\delta})\big|^2\cdot\big|(s/t)Z^Ju_{\ih}\big|^2dx\bigg)^{1/2}
\\
&\quad +\sum_{|J|  =  7}\sum_{\jc}
\bigg(\int_{H_s}\big|(t/s)(t^{-1/2}s^{-1} + t^{-3/2}s^{\delta})\big|^2\cdot\big|(s/t)Z^Jv_{\jc}\big|^2dx\bigg)^{1/2}
\\
&\leq C(n)(C_1\varepsilon)^2Ks^{-1} + C(n)(C_1\varepsilon)Ks^{-3/2 + \delta} + C(n)(C_1\varepsilon)Ks^{-3/2 + 2\delta} 
\\
&\quad + C(n)C_1\varepsilon Ks^{-1}\sum_{|J|\geq 6}\sum_{\ih}E_m(s,Z^J u_{\ih})^{1/2} 
+ C(n)C_1\varepsilon Ks^{-1}\sum_{|J|=7}\sum_{\jc}E_m(s,Z^J v_{\jc})^{1/2}
\\
&\leq C(n)(C_1\varepsilon)Ks^{-3/2 + 2\delta} 
+ C(n)C_1\varepsilon Ks^{-1}\bigg(\sum_{|J|\geq 6}\sum_{\ih}E_m(s,Z^I u_{\ih})^{1/2} + \sum_{|J|=7}\sum_{\jc}E_m(s,Z^I v_{\jc})^{1/2}\bigg)
\endaligned
$$
The estimate on the terms about 
$$
\sum_{|I^*_1|\geq 1\atop I^*_1 + I^*_2 = I^*}Z^{I^*_1}G_i^{j\alpha\beta}Z^{I^*_2}\del_{\alpha\beta}w_j
$$
is similar to that of \eqref{quasilinear lem esti-source}. With the aid of \eqref{quasilinear lem G-estimate}:

$$
\aligned
&\quad\sum_{|I^*_1|\geq 1\atop I^*_1 + I^*_2 = I^*}\bigg(\int_{H_s}\big|Z^{I^*_1}G_i^{j\alpha\beta}Z^{I^*_2}\del_{\alpha\beta}w_j\big|^2dx\bigg)^{1/2}
\\
&\leq \sum_{|I^*_2|= 6\atop I^*_1 + I^*_2 = I^*}
\bigg(\int_{H_s}\big|Z^{I^*_1}G_i^{j\alpha\beta}Z^{I^*_2}\del_{\alpha\beta}w_j\big|^2dx\bigg)^{1/2}
     +\sum_{|I^*_2|= 5\atop I^*_1 + I^*_2 = I^*}
\bigg(\int_{H_s}\big|Z^{I^*_1}G_i^{\jh\alpha\beta}Z^{I^*_2}\del_{\alpha\beta}u_{\jh}\big|^2dx\bigg)^{1/2}
\\
&\quad+\sum_{2\leq|I^*_1|\leq 3\atop I^*_1 + I^*_2 = I^*}
\bigg(\int_{H_s}\big|Z^{I^*_1}G_i^{\jc\alpha\beta}Z^{I^*_2}\del_{\alpha\beta}v_{\jc}\big|^2dx\bigg)^{1/2}
      +\sum_{|I^*_1|= 3\atop I^*_1 + I^*_2 = I^*}
\bigg(\int_{H_s}\big|Z^{I^*_1}G_i^{\jh\alpha\beta}Z^{I^*_2}\del_{\alpha\beta}u_{\jh}\big|^2dx\bigg)^{1/2}
\\
&\quad + \sum_{1\leq|I^*_2|\leq 3\atop I^*_1 + I^*_2 = I^*}
\bigg(\int_{H_s}\big|Z^{I^*_1}G_i^{j\alpha\beta}Z^{I^*_2}\del_{\alpha\beta}w_j\big|^2dx\bigg)^{1/2}
+
\bigg(\int_{H_s}\big|Z^{I^*}G_i^{j\alpha\beta}\del_{\alpha\beta}w_j\big|^2dx\bigg)^{1/2}
\endaligned
$$
Now take into account \eqref{quasilinear proof energy assumption}, \eqref{quasilinear proof used decay} and
\eqref{quasilinear energy better v},
$$
\aligned
&\quad\sum_{|I^*_1|\geq 1\atop I^*_1 + I^*_2 = I^*}\bigg(\int_{H_s}\big|Z^{I^*_1}G_i^{j\alpha\beta}Z^{I^*_2}\del_{\alpha\beta}w_j\big|^2dx\bigg)^{1/2}
\\
&\leq KC(n)C_1\varepsilon s^{-1}\sum_{i\atop |I|=7}E_m(s,Z^I w_i)^{1/2} 
+ \bigg(KC(n)C_1\varepsilon s^{-1}\sum_{\ih\atop |I|\geq6}E_m(s,Z^I u_{\ih})^{1/2} + KC(n)(C_1\varepsilon)^2 s^{-1}\bigg)
\\
&\quad + KC(n)(C_1\varepsilon)^2 s^{-3/2+2\delta} + KC(n)(C_1\varepsilon)^2s^{-1}
\\
&\quad + KC(n)(C_1\varepsilon)^2 s^{-1} + KC(n)C_1\varepsilon s^{-1}\sum_i\sum_{|I|\geq 6} E_m(s,Z^I w_i).
\endaligned
$$
So finally one concludes by \eqref{quasilinear lem esti-commutator}. 
\end{proof}
%\newpage
%**************************************************************************************************************************************************************************
\begin{proof}[Proof of lemma \ref{quasilinear lem energy curveterm is small}]
One will firstly prove the following estimate:
$$
\int_{H_s}(s/t)\del_t Z^{I^*}w_i \del_{\beta}Z^{I^*}w_j \del_{\alpha}G_i^{j\alpha\beta}dx \leq C(n)C_1\varepsilon s^{-1}\sum_k E_m(s,Z^{I^*}w_k).
$$
By \eqref{quasilinear lem G-estimate}, 
$$
\big|\del_{\alpha}G_i^{j\alpha\beta}\big| \leq C(n)C_1\varepsilon K(t^{-1/2}s^{-1} + t^{-3/2}s^{\delta}).
$$
Substitute this into the expression, one gets:
$$
\aligned
&\quad\int_{H_s}(s/t)\del_t Z^{I^*}w_i \del_{\beta}Z^{I^*}w_j \del_{\alpha}G_i^{j\alpha\beta}dx
\\
&\leq C(n)C_1\varepsilon K\int_{H_s}\big(t^{-1/2}s^{-1} + t^{-3/2}s^{\delta}\big)(t/s)\cdot\big|(s/t)\del_t Z^{I^*}w_i\big|\cdot\big|(s/t)Z^{I^*}\del_{\beta}w_j\big|dx
\\
&\leq C(n)C_1\varepsilon K s^{-1}\sum_kE_m(s,Z^{I^*}w_k).
\endaligned
$$
\end{proof}
%*********************************************************************************************************************************************************************
\appendix
%=====================================================================================================================================================================
\section{Local existence for small initial data}
One will establish the following local-in-time existence result for small initial data. The interest is to get an a priori estimate on
the life spin time.
Consider the Cauchy problem in $\mathbb{R}^{n+1}$:
\begin{equation}\label{appendix eq nonlinear}
\left\{
\aligned
&g_i^{\alpha\beta}(w,\del w)\del_{\alpha\beta} w_i + D_i^2 w_i = F_i(w,\del w),
\\
&w_i(B+1,x) =\varepsilon'{w_i}_0,\quad \del_t w_i(B+1,x) = \varepsilon'{w_i}_1.
\endaligned
\right.
\end{equation}
Here 
$$
\aligned
&g_i(w,\del w) = m^{\alpha\beta} A^{\alpha\beta\gamma j}_i\del_{\gamma}w_j + B^{\alpha\beta j}w_j + O(|w|^2 + |\del w|^2),
\\
&F_i(w,\del w) = P^{\alpha\beta jk}_i \del_{\alpha}w_j \del_{\beta}w_k + Q^{\alpha jk}_i \del_{\alpha}w_j w_k + R^{jk}_i w_j w_k + O(|w|^3 + |\del w|^3).
\endaligned
$$
These $A^{\alpha\beta\gamma j}_i,B^{\alpha\beta j},P^{\alpha\beta jk}_i,Q^{\alpha jk}_i,R^{jk}_i$ are constants.
$({w_i}_0,{w_i}_1)\in H^{s+1}\times H^s$ functions and supported on the disc $\{|x| \leq B\}$. In general the following local-in-time existence holds
\begin{theorem}\label{appendix Thm A}
For any integer $s\geq 2p(n)-1$,
there exists a time interval $[0,T(\varepsilon')]$ on which
the cauchy problem \eqref{appendix eq nonlinear} has an unique solution in sense of distribution $w_i(t,x)$. Further more %\marginpar{verify the regularity.}
$$
w_i(t,x) \in C([0,T(\varepsilon')], H^{s+1}) \cap C^1([0,T(\varepsilon')], H^s),
$$
and when $\varepsilon'$ sufficiently small, 
$$
T(\varepsilon') \geq C(A\varepsilon')^{-1/2}
$$
where $A$ is a constant depending only on ${w_i}_0$ and ${w_i}_1$.
Let $E_g(T,w_i)$ be the hyperbolic energy defined in the section 2.2. For any $\varepsilon, C_1>0$, there exists an $\varepsilon'$ such that
$$
\sum_{i}E_g(B+1, w_i) \leq C_1\varepsilon.
$$
\end{theorem}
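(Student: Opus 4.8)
\emph{Plan of proof.} The statement packages three things: local well‑posedness with the stated regularity, a lower bound on the life span, and the smallness of the hyperboloidal energy on $H_{B+1}$. I would treat them in this order, the second being the only real work.

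\textbf{Local existence and regularity.} Since $g_i^{\alpha\beta}(w,\del w)=m^{\alpha\beta}+O(|w|+|\del w|)$ and the principal part is diagonal in $i$, for $\varepsilon'$ small each equation is a scalar second‑order equation whose symbol $g_i^{\alpha\beta}$ is a Lorentzian metric close to Minkowski; hence the system is hyperbolic with no extra symmetry hypothesis needed. Local existence and uniqueness of $w_i\in C([B+1,B+1+T(\varepsilon')],H^{s+1})\cap C^1([B+1,B+1+T(\varepsilon')],H^s)$ for $s\ge 2p(n)-1$ (so that $H^{s+1}\hookrightarrow C^2$ and the Moser/Kato–Ponce product and commutator estimates used below are available) is then the classical quasilinear theory: run the Picard iteration $g_i^{\alpha\beta}(w^{(k)},\del w^{(k)})\del_{\alpha\beta}w_i^{(k+1)}+D_i^2 w_i^{(k+1)}=F_i(w^{(k)},\del w^{(k)})$ on the linearised equations and pass to the limit, or cite Theorem 6.4.11 of \cite{Ho1} directly.

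\textbf{Energy estimate and life span.} This is the heart of the matter. For $|I|\le s$, apply $\del^I$ to the $i$‑th equation and use $\del_t\del^I w_i$ as multiplier; the standard energy identity for $\mathcal E^I_i(t)=\int\bigl(|\del\del^I w_i|^2+D_i^2|\del^I w_i|^2\bigr)dx$ holds with a manageable coefficient loss from the quasilinear part because $\del g_i^{\alpha\beta}=O(|\del^2 w|+|\del w|+|w|)$. Set $E(t)=\sum_{i,\,|I|\le s}\mathcal E^I_i(t)$. Estimating the commutator $[\del^I,g_i^{\alpha\beta}]\del_{\alpha\beta}w_i$ and the derivatives of $F_i$ by the product/commutator inequalities, placing half the derivatives in $L^\infty$ via Sobolev (this is where $s\ge 2p(n)-1$ enters), and bounding $\|w(t)\|_{L^\infty}$ by $\sum_{\jh}\|w_{\jh}(t)\|_{H^s}$ for the \emph{wave} components together with $E(t)^{1/2}$ for the Klein–Gordon components, one arrives at a differential inequality of the schematic form
\[
\frac{d}{dt}E(t)^{1/2}\le C\Bigl(\sum_{\jh}\|w_{\jh}(t)\|_{H^s}+E(t)^{1/2}\Bigr)E(t)^{1/2},
\]
where the $\sum_{\jh}\|w_{\jh}\|_{H^s}$ factor comes from the semilinear‑coefficient terms $B^{\alpha\beta j}w_j$ in $g_i$ and the terms $Q_i^{\alpha jk}w_k\del_\alpha w_j$, $R_i^{jk}w_jw_k$ in $F_i$ carrying a wave index, whose $H^s$ norm is \emph{not} part of $E$. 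Since $\del_t w_{\jh}$ is controlled, $\sum_{\jh}\|w_{\jh}(t)\|_{H^s}\le A\varepsilon'+\int_{B+1}^t E(\tau)^{1/2}d\tau$ with $A=A(\{w_{i0}\},\{w_{i1}\})$. Now run a continuity argument: if $E(\tau)^{1/2}\le 2A\varepsilon'$ on $[B+1,B+1+T_0]$, then $\sum_{\jh}\|w_{\jh}\|_{H^s}\le A\varepsilon'(1+2T_0)$ there, so integrating the displayed inequality gives $E(t)^{1/2}\le A\varepsilon'+C(A\varepsilon')^2T_0^2$ (for $T_0\ge1$), which stays $\le 2A\varepsilon'$ as soon as $C(A\varepsilon')T_0^2\le1$, i.e. for $T_0\le c(A\varepsilon')^{-1/2}$; the quadratic time weight produced by the linear‑in‑$t$ growth of $\sum_{\jh}\|w_{\jh}\|_{H^s}$ is precisely the mechanism fixing the exponent $-1/2$. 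By the continuation criterion (Lemma \ref{basic lem prcp of cont.}) the solution therefore exists with $E^{1/2}\le 2A\varepsilon'$ at least up to $T(\varepsilon')=B+1+c(A\varepsilon')^{-1/2}$, which tends to $\infty$ as $\varepsilon'\to0$. Setting up this multiplier identity, controlling the quasilinear commutator, and closing the bootstrap against the growth of the wave components is the one genuinely delicate step; everything else is cited or geometric.

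\textbf{Smallness of the hyperboloidal energy on $H_{B+1}$.} By finite speed of propagation for \eqref{appendix eq nonlinear} (the characteristic cones are those of $m^{\alpha\beta}+O(\varepsilon')$) and the compact support of the data in $\{|x|\le B\}$, for each $t$ the function $w_i(t,\cdot)$ is supported in $\{|x|\le B+(1+O(\varepsilon'))(t-B-1)\}$. Hence the compact region $\mathcal K$ bounded below by $\{t=B+1\}$ and above by $H_{B+1}\cap\Lambda'$ satisfies $\mathcal K\subset\{B+1\le t\le(B+1)^2\}$ and, once $\varepsilon'$ is small enough that $T(\varepsilon')\ge(B+1)^2$, lies inside the interval of existence; moreover $w_i$ restricted to a fixed compact neighbourhood of $\mathcal K$ has compact support. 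On $\mathcal K$ the fields $Z$ (in particular $H_j=t\del_j+x^j\del_t$) have coefficients bounded by $C(B)$, and the energy estimate of the previous step controls all Sobolev norms of $w_i$ up to the order permitted by $s$, on every slice meeting $\mathcal K$, by $C(B)\varepsilon'$. A routine change of variables — writing $E_g(B+1,Z^I w_i)$ as an integral over $H_{B+1}\cap\mathcal K$ of a quadratic expression in $\del Z^I w_i$ and $Z^I w_i$, bounding $Z^I$ by $\del^J$ with $|J|\le|I|+1$ and $C(B)$ coefficients, and using that the area element is $\le C(B)\,dx$ on a set of finite measure — yields $\sum_i E_g(B+1,Z^I w_i)\le C(B,|I|)(\varepsilon')^2$; the gap between $E_g$ and $E_m$ is $O(K(\varepsilon')^3)$ and irrelevant. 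Given $\varepsilon,C_1>0$ one now simply picks $\varepsilon'$ small enough that $T(\varepsilon')\ge(B+1)^2$ and $C(B,|I|)(\varepsilon')^2\le C_1\varepsilon$, which is the claimed bound.
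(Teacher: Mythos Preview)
Your proposal is correct and follows the same overall architecture as the paper: standard quasilinear local theory, a flat--slice energy estimate in which the undifferentiated $w$ produces a coefficient growing linearly in $t$ (so the exponent integrates to $t^2$ and the bootstrap closes for $t\lesssim (A\varepsilon')^{-1/2}$), and then a transfer of smallness from flat slices to $H_{B+1}$ using that the region between $\{t=B+1\}$ and $H_{B+1}\cap\Lambda'$ is compact and covered by this lifespan.

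Two implementation choices differ from the paper and are worth noting. First, for the linear--in--$t$ growth the paper does not split wave and Klein--Gordon components; it argues uniformly that, since $w_i^k(t,\cdot)$ is supported in a ball of radius $\sim t$, Sobolev/Poincar\'e gives $|\del^J w_i^k|\le C(t+B+1)\varepsilon' A$ for all $|J|\le p(n)-1$, and plugs this directly into a Gronwall factor $\exp\big(CA\varepsilon'\int_{B+1}^t(\tau+B+1)\,d\tau\big)$. Your time--integration of $\del_t w_{\jh}$ isolates the genuine culprit (the massless components) and is slightly sharper, but leads to the same exponent. Second, for the hyperboloidal energy the paper does not bound pointwise on the compact cap; it runs the multiplier $\del_t w_i$ over the spacetime region $V(B)=\{t\ge B+1,\ t^2-|x|^2\le (B+1)^2\}\cap\Lambda'$ to obtain the identity
\[
E_g(B+1,Z^I w_i)-E^*_g(B+1,Z^I w_i)=\int_{V(B)}\Big(Z^I F_i\,\del_t w_i-[Z^I,g^{\alpha\beta}\del_{\alpha\beta}]w_i\,\del_t w_i+\del_\alpha g^{\alpha\beta}\del_t w_i\del_\beta w_i-\tfrac12\del_t g^{\alpha\beta}\del_\alpha w_i\del_\beta w_i\Big)\,dx,
\]
and bounds the right side by $CA\varepsilon'$ using the flat--slice estimates already proved. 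Your change--of--variables argument on $\mathcal K$ achieves the same conclusion by cruder means; the paper's flux identity is more in keeping with the energy machinery of Section~2, but either route is valid.
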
 
%---------------------------------------------------------------------------------------------------------------------------------------------------
\begin{proof}
The proof is just a classical iteration procedure. The high-order terms will be omitted. One will not give the details but the key steps.
One defines the standard energy associated to a curved metric $g$
$$
E^*_g(s,w_i) := \int_{\mathbb{R}^n} \big(g^{00}(\del_t u)^2 - g^{ij}\del_iu\del_ju\big) dx.
$$
One takes the following iteration procedure:
\begin{equation}\label{appendix proof iteration}
\left\{
\aligned
&g^{\alpha\beta}_i(w^k,\del w^k) \del_{\alpha\beta} w_i^{k+1} = F(w^k,\del w^k),
\\
&w_i(0,x) = \varepsilon'{w_i}_0,\quad \del_t w_i(0,x) = \varepsilon'{w_i}_0,
\endaligned
\right.
\end{equation}
and take $w_i^0$ as the solution of the following linear Cauchy problem:
$$
\left\{
\aligned
& \Box w_i = 0,
\\
& w_i(0,x) = \varepsilon'{w_i}_0, \quad \del_t w_i (0,x) = \varepsilon'{w_i}_1.
\endaligned
\right.
$$
Suppose that for any $|I|\leq 2p(n)-1$,
\begin{equation}\label{appendix proof energy assumption}
\aligned
&\varepsilon'A \geq e\cdot E^*_g(B+1,\del^I w_i^k)^{1/2},
\\
&\varepsilon'A \geq E^*_g(t,\del^I w_i^k)^{1/2}.
\endaligned
\end{equation}
Taking the size of the support of the solution $w_i^k(t,\cdot)$ into consideration, by Sobolev's inequality, for any $|J| \leq p(n)-1$,
\begin{equation}\label{appendix proof decay estimate}
|\del^J w_i^k|(t,x) \leq C(t+B+1) \varepsilon' A.
\end{equation}
Now one wants to get the energy estimate on $\del^I w_i^{k+1}$. By the same method used in \cite{So}, one gets
$$
\aligned
E^*_g(t,\del^I w_i^{k+1})^{1/2} 
&\leq E_g(t,\del^I w_i^{k+1}) \exp\bigg(CA\varepsilon'\int_{B+1}^t(\tau+B+1)d\tau\bigg)
\\
&\leq e^{-1} \varepsilon'A \exp\bigg(CA\varepsilon'\int_{B+1}^t(\tau+B+1)d\tau\bigg)
\endaligned
$$
When 
$$
\sqrt{CA\varepsilon'} \leq (B+1)^{-1}
$$
and
$$
t\leq \frac{1}{3}(CA\varepsilon')^{-1/2},
$$
one gets that
$$
E^*_g(t,\del^I w_i^{k+1})^{1/2} \leq \varepsilon' A.
$$
Then by an standard method presented in the proof of theorem ... of \cite{So}, 
$$
\lim_{k \rightarrow \infty} w_i^{k} = w_i
$$
is the unique solution of \eqref{appendix eq nonlinear}, and $w_i \in C([0,T(\varepsilon')], H^{s+1}) \cap C^1[0,T(\varepsilon')], H^s)$.
Here one can take 
$$
T(\varepsilon') = C(A\varepsilon)^{-1/2}
$$

To estimate $E_g(B+1,Z^I w_i)$, one takes $\del_t w_i$ as the multiplier and by the standard procedure of energy estimate,
$$
\aligned
E_g(B+1,Z^I w_i) - E^*_g(B+1,Z^I w_i) 
&= \int_{V(B)} \big(Z^I F_i(w, \del w)\del_t w_i - [Z^I,g^{\alpha\beta}\del_{\alpha\beta}] w_i \cdot \del_t w_i\big) dx
\\
&+ \int_{V(B)}\bigg(\del_{\alpha}g^{\alpha\beta}\del_t w_i\del_{\beta}w_i -\frac{1}{2}\del_t g^{\alpha\beta}\del_{\alpha}w_i \del_{\beta} w_i\bigg) dx,
\endaligned
$$
where $V(B) : = \{(t,x): t\geq B+1, t^2-|x|^2 \leq B+1\}\cap \Lambda'$.
When $A\varepsilon'\leq (B+1)^{-2}$, thanks to \eqref{appendix proof decay estimate} and \eqref{appendix proof energy assumption}, 
the right hand side can be controlled by $CA\varepsilon'$. Then one gets
$$
E_g(B+1,Z^I w_i)\leq CA\varepsilon'.
$$
\end{proof}

\begin{center}
\bf{\Large{Acknowledgments}}
\end{center}
Part of this work is motivated by a joint work with the author's doctoral supervisor Prof. Ph. LEFLOCH. The author is grateful to him. 
The author is also grateful to his parents Dr. Qing-jiu MA and Ms. Huiqin-YUAN, his fianc\'ee Miss Yuan-yi YANG and his comrade Ye-ping ZHANG,  
for their successive supports and encouragements.

\end{document}